\documentclass[11pt]{amsart}
\usepackage[utf8]{inputenc}
\usepackage{a4wide}
\usepackage{amsmath,eucal, mathtools}
\usepackage{xcolor}
\usepackage{bbm,stix}
\usepackage{enumitem}
\usepackage{graphics} 
\usepackage{epsfig} 
\usepackage{graphicx,subfig}                     
\usepackage{wrapfig}
\usepackage{textcomp}
\usepackage{tikz}
\usepackage{pgfplots}
\usepackage{dsfont}
\pgfplotsset{compat=1.16}
\usepackage{comment}
\usepackage{multicol} 
\usepackage{mathrsfs}
\usepackage{epstopdf}
\usepackage{cancel}
\usepackage[colorlinks=true,allcolors=black]{hyperref}
\usepackage{bm}                        
\usepackage[]{appendix}  
\usepackage{booktabs} 
\usepackage{graphicx,subfig}                                                       
\usepackage{float} 
\usepackage{wrapfig}              

\newtheorem{defn}{Definition}[section]
\newtheorem{thm}{Theorem}[section]
\newtheorem{prop}{Proposition}[section]
\newtheorem{lem}{Lemma}[section]
\newtheorem{cor}{Corollary}[section]

\newtheorem{ass}{Assumptions}[section]
\newtheorem{rem}{Remark}[section]


\DeclareMathOperator*{\argmin}{argmin}

\newcommand{\N}{\mathbb{N}}
\newcommand{\R}{\mathbb{R}}
\newcommand{\Rn}{{\mathbb{R}^{n}}}
\newcommand{\Rnu}{{\mathbb{R}^{d}}}
\newcommand{\Rnn}{{\mathbb{R}^{2n}}}

\newcommand{\mpd}{{\mathscr{P}_{2}(\Rn)}}
\newcommand{\mpdu}{{\mathscr{P}_{2}(\Rnu)}}

\newcommand{\mpdm}[1]{{\mathscr{P}^{#1}_{2}(\Rnu)}}
\newcommand{\mpdmp}[1]{{\mathscr{P}^{#1}_{p}(\Rnu)}}
\newcommand{\pd}[1]{\mathscr{P}_{#1}(\Rn)}

\newcommand{\mF}{{\mathscr{F}}}

\newcommand{\dt}{{\mathrm{d}t}}

\newcommand{\dd}{{\mathrm{d}}}
\newcommand{\1}{\mathbb{1}_{S_0}}
\newcommand{\F}{\mathscr{F}_A}

\definecolor{brightmaroon}{rgb}{0.76, 0.13, 0.28}
\definecolor{ceruleanblue}{rgb}{0.16, 0.32, 0.75}

\hyphenation{mac-ro-phag-es}

\title[Weak Solutions to a Constrained Aggregation-Diffusion-Reaction Model for MS]{Existence of Weak Solutions to a Constrained Aggregation-Diffusion-Reaction Model for Multiple Sclerosis}
\author{S. Fagioli  \and M. Kamath Katapady}
\address{ Simone Fagioli - DISIM - Department of Information Engineering, Computer Science and Mathematics, University of L'Aquila, Via Vetoio 1 (Coppito)
67100 L'Aquila (AQ) - Italy}
\email{simone.fagioli@univaq.it}
\address{ Megha Kamath Katapady - DISIM - Department of Information Engineering, Computer Science and Mathematics, University of L'Aquila, Via Vetoio 1 (Coppito)
67100 L'Aquila (AQ) - Italy}
\email{megha.kamathkatapady@graduate.univaq.it}

\keywords{multiple sclerosis; non-linear diffusion; non-local interaction; splitting schemes; variational schemes} 
\subjclass[2020]{35K65;35Q92;35A15;45K05;92C17}
\begin{document}

\begin{abstract}
We establish an existence result for weak solutions to an aggregation-diffusion-reaction equation with a constraint, arising in the modelling of multiple sclerosis. The model is derived from a general chemotaxis-type framework and describes the time evolution of the density of activated macrophages, which is subject to attraction by oligodendrocytes. The latter are governed by a constraint equation. The proof relies on a variational splitting scheme that isolates the transport (aggregation-diffusion) and reaction contributions. The structure of the constraint makes it possible to recover the oligodendrocyte density as the limit of a sequence of characteristic functions.
\end{abstract}

\maketitle

\section{Introduction}
We are interested in studying the well-posedness of the following constrained aggregation-diffusion-reaction equation arising in the modelling of Multiple Sclerosis disease
\begin{equation}\label{maineq}
\begin{cases}
     \partial_t\rho = \nabla\cdot \left( \rho\nabla\Phi'(\rho) -\chi\rho\nabla K\ast (\rho + \beta)\right)+M(\rho)\\
     h(\rho)\left(1-\beta\right)=0,
\end{cases}
\end{equation}
for $x\in\Rnu$ and $t>0$. In \eqref{maineq} the unknown is the couple $(\rho,\beta)$, where $\rho$ represents a density describing activated macrophages and $\beta$ is a density describing destroyed oligodendrocytes, see below for a breif biological description. In the system above, $\Phi$, $M$ and $h$ are given functions modelling non-linear diffusion, reaction, and saturation effects respectively, $K$ is the Bessel interaction kernel, while $\chi>0$ is the chemotaxis coefficient. 

Multiple sclerosis (MS) is a chronic inflammatory disease of the central nervous system, characterized by immune-mediated demyelination, macrophages activation, oligodendrocyte damage, and axonal degeneration. The pathological hallmark of the disease is the formation of focal lesions in the brain and spinal cord, whose spatial organization ranges from classical plaques to more complex concentric structures such as those observed in Baló's concentric sclerosis, see \cite{Balo,Barnett09,Lass18,Lucchinettietal00,marik}. These heterogeneous patterns reflect the intricate interplay between immune cells, chemical mediators, and neural tissue, and highlight the importance of spatially distributed mechanisms in the progression of the disease.

Mathematical modelling has emerged as a valuable tool for understanding the mechanisms underlying lesion formation and evolution in MS. Early modelling approaches mainly relied on systems of ordinary differential equations to describe the temporal evolution of immune cell populations and inflammatory mediators, as well as the balance between myelin damage and repair \cite{ElettrebyAhmed2020,Jenner2025MSModel,Kotelnikova2017,MoFr21}. These compartmental ODE models have provided insight into disease progression and relapse-remission dynamics by capturing interactions between immune activation, demyelination, and neurodegeneration. However, by construction, they neglect spatial effects and therefore cannot account for lesion morphology, spatial heterogeneity, or pattern formation observed in clinical imaging.

To overcome this limitation, a growing body of literature has focused on spatially extended models based on partial differential equations. In particular, reaction-diffusion-chemotaxis systems have proven effective in describing the migration of immune cells driven by chemical gradients and their interaction with damaged tissue. In these models, macrophages or activated immune cells undergo random diffusion combined with directed chemotactic movement toward chemoattractants released by apoptotic oligodendrocytes or inflammatory cytokines. This modelling framework naturally explains the emergence of spatial patterns and has been shown to reproduce lesion-like structures observed in clinical imaging.

A seminal contribution in this direction is the chemotaxis-based model introduced in \cite{CalvezKhonsari2008,KhonsariCalvez2008} to describe demyelination patterns in MS, where the interaction between macrophages, chemoattractants, and myelin degradation leads to diffusion-driven instabilities and Turing-type pattern formation, see also \cite{Lombardo2017}. Subsequent analytical and numerical studies have deepened the understanding of pattern dynamics and instability mechanisms in this model: wavefront propagation and the emergence of concentric rings reminiscent of Balo’s sclerosis were studied through weakly nonlinear analysis in \cite{Barresi2016}; radially symmetric (axisymmetric) solutions and their bifurcations were explored in \cite{Bilotta2019}; and pattern stability through Eckhaus and zigzag instabilities was examined in \cite{Bilotta2018}. These works collectively establish chemotaxis as a key mechanism in the mathematical description of MS lesion dynamics. More recent modifications and extensions - including models that incorporate cytokine-mediated effects on macrophage activation \cite{Gargano2024} - continue to build on this chemotaxis framework and highlight its central role in connecting immune migration with spatial lesion structure.

Furthemore, several extensions of the classical chemotaxis framework have been proposed to enhance biological realism. Models incorporating nonlinear or degenerate diffusion have been introduced to better account for volume-filling effects and the constrained movement of cells in neural tissue \cite{Fagioli2025}. Other studies have included Allee-type effects in macrophage activation to model threshold-driven inflammatory responses and their impact on pattern formation \cite{Bisi2023Allee}. Furthermore, kinetic-based derivations of macroscopic chemotaxis models have recently been proposed, providing a multiscale justification of the governing equations and shedding light on the emergence of two-dimensional spatial patterns \cite{Bisi2025Kinetic}.

These MS-specific models are deeply rooted in the broader mathematical theory of chemotaxis, whose prototype is the classical Keller-Segel system \cite{HillenPainter2009}. In particular, the model introduced in \cite{CalvezKhonsari2008}, and the nonlinear version proposed in \cite{Fagioli2025}, consists of a system of equations governing the evolution of three entities: macrophages $\rho$, cytokines $c$, and apoptotic oligodendrocytes $\beta$, in a bounded domain $\Omega \subset \Rnu$ with smooth boundary,
\begin{equation}\label{system}
\left\{
\begin{aligned}
    \partial_t \rho &= \nabla \cdot \left(\rho \nabla \bigl(\Phi'(\rho) - \chi \nabla c \bigr)\right) + M(\rho),\\
    \tau \partial_t c &= \Delta c - c + \rho + \beta,\\
    \tau \partial_t \beta &= h(\rho) (1 - \beta),
\end{aligned}
\right.
\end{equation}
where $\tau>0$ is a time-scaling parameter. For this system, global well-posedness and boundedness of solutions were proved in \cite{Fagioli2025} for smooth domains with Neumann boundary conditions, by adapting well-known techniques developed in \cite{li2016,tao2008,tao2011}. However, to the best of author knowledge, no analysis of pattern formation or asymptotic behaviour was carried out.

We take a first step in this direction by considering an intermediate asymptotic regime, namely a pseudo-stationary limit $\tau\to 0^+$ in \eqref{system}, formally described in the following. As a first ansatz, we assume that the cytokine dynamics are faster than those of macrophages and, similarly to the classical Keller-Segel framework, we set
\begin{equation*}
    \Delta c - c + \rho + \beta = 0.
\end{equation*}
Considering the problem in the whole space $\Rnu$, classical theory shows that this equation can be solved by convolution with the Bessel kernel $K$,
\begin{equation*}
    c(t,x) = K \ast (\rho + \beta)(t,x),
\end{equation*}
see for instance \cite{adams1999function}. Additional details concerning the Bessel kernel and its properties will be presented later. Thus, the first equation in \eqref{system} can be rephrased as
\begin{equation}\label{firsteq}
     \partial_t \rho = \nabla \cdot \left(\rho \nabla \bigl(\Phi'(\rho) - \chi \nabla K \ast (\rho + \beta) \bigr)\right) + M(\rho).
\end{equation}

Numerical simulations and analytical results in \cite{Fagioli2025,Lombardo2017} suggest that pattern formation is mainly driven by the evolution of the support of activated macrophages. This observation is also supported by the structure of the third equation in \eqref{system}. By additionally assuming that the third equation is at equilibrium, 
\begin{equation}\label{secondeq}
    h(\rho)(1-\beta)=0,
\end{equation}
we formally reduce the system \eqref{system} to \eqref{maineq}. A rigorous justification of the steps described above, and in particular of the reduction of \eqref{system} to \eqref{maineq}, is left to future work and we only focus now on existence of weak solutions to \eqref{maineq}. 

We summarise the strategy we adopt to prove the existence of weak solutions for system \eqref{maineq}. Recalling that the function $h$ in the second equation accounts for saturation effects and, in particular, that we may assume $h(0)=0$, equation \eqref{secondeq} is satisfied either if $\rho=0$ or if $\beta=1$ whenever $\rho\neq 0$. Therefore, the asymptotic behaviour of $\beta$ is roughly that of the characteristic function of the support of $\rho$.

Neglecting for the moment the contribution of the reaction term, we can formally interpret equation \eqref{firsteq} as
\begin{equation}\label{altmaineq}
    \partial_t \rho = \nabla \cdot \left(\rho \nabla\left(\Phi'(\rho)-\chi K\ast \rho - \chi K\ast \mathbb{1}_A\right)\right),
\end{equation}
where $A$ is a time-dependent set, in some sense related to $\operatorname{supp}(\rho)$.  

Equation \eqref{altmaineq} can then be formally formulated as the $2$-Wasserstein gradient flow
\begin{equation*}
    \partial_t \rho = \nabla \cdot \left(\rho \nabla\left(\frac{\delta \F}{\delta \rho}[\rho]\right)\right),
\end{equation*}
associated with the energy functional
\begin{equation*}
    \F[\rho]
    = \int_{\Rnu} \Phi(\rho)
      - \frac{\chi}{2}\int_{\Rnu} \rho\, K\ast \rho
      - \chi \int_{\Rnu} \rho\, K\ast \mathbb{1}_A ,
\end{equation*}

see \cite{ambrosio2008gradient,santambrogio2015optimal}. Despite this intuitive interpretation, making the identification of \eqref{firsteq} with \eqref{altmaineq} rigorous is highly nontrivial, in particular due to the difficulty of properly characterising the set $A$ and its relation to $\operatorname{supp}(\rho)$. However, we exploit this intuition to implement an implicit-explicit JKO scheme, in the spirit of \cite{difrancescofagioli13}, in order to prove the existence of weak solutions to \eqref{maineq} in the case $M=0$; see Section~\ref{sec:JKO} for further details.

The reaction term is then incorporated through a variational splitting scheme in order to construct weak solutions to the full system \eqref{maineq}, see \cite{cafasasc}. More precisely, during the first inner time step we solve the aggregation-diffusion part of the system by means of the implicit-explicit JKO scheme mentioned above. We note that in this step the mass is preserved during the evolution. In the second inner time step of the splitting procedure, we solve the system of ordinary differential equations associated with the reaction contribution, which provides the final approximation of the population densities after one full time step. This variational splitting scheme is described in detail in Section~\ref{sec:reaction}.



The paper is organized as follows: Section \ref{sec:assumption} presents definitions and preliminary facts before laying the assumptions and stating the main result in Theorem \ref{mainth}. In Section \ref{sec:JKO} we consider the constrained system without the reaction part and introduce an energy functional along which we solve the transport equation using the implicit-explicit JKO scheme. In Section \ref{sec:reaction} we demonstrate the splitting scheme to solve the system with the reaction term thus proving the main theorem. The Appendix \ref{sec:appendix} contains a collection of well-known results that are useful in our proofs.

\section{Preliminaries, assumptions and main result}\label{sec:assumption}
We denote by $\mathcal{M}_{+}(\Rnu)$ the set of positive and finite measures and for $p\geq 1$ introduce 
\begin{equation}\label{prob_m}
    \mpdmp{m}:=\left\{\mu\in \mathcal{M}_{+}(\Rnu)\,|\, \mathsf{m}_p[\mu] <+\infty, \, \mu(\Rnu)=m\right\},
\end{equation}
where $\mathsf{m}_p[\mu]$ denotes the  $p-$moment of $\mu$. We endow the space $\mpdmp{m}$ with the $p-$Wasserstein distance type distance,
\begin{equation*}
    \mathrm{d}_{m,p}^p(\mu_1,\mu_2)=\inf_{\pi\in \Pi^m(\mu_1,\mu_2)}\left\{\int_{\Rnu\times\Rnu}|x-y|^p\,d\pi(x,y)\right\},
\end{equation*}
where $\Pi^m(\mu_1,\mu_2)$ is the set of all transport plans $\pi$ between $\mu_1$ and $\mu_2$, that is, the
set of positive measures of fixed mass $\pi(\Rnu\times\Rnu) = m^2$, defined on the product space
with marginals $\mu_1$ and $\mu_2$. When $m=1$ and $p=2$ we will denote $\mpdm{m}=\mpdu$ and
$$
\mathrm{d}_{m,p}^p(\mu_1,\mu_2)=W_2^2(\mu_1\mu_2).
$$
In order to work with densities of evolving mass we recall the definition of Bounded-Lipschitz distance:
\begin{defn}[Bounded-Lipschitz distance]
    Let $\mathcal{M}_+$ denote the set of positive measures. We define the Bounded-Lipschitz distance $d_\mathrm{BL}:\mathcal{M}_+\times\mathcal{M}_+\to \R_+$
    as follows
    \begin{equation}
        d_\mathrm{BL}(\mu,\nu):=\sup \left\{\int_\Rnu f \, \mathrm{d}(\mu-\nu)\quad|\quad\|f\|_{L^\infty(\Rnu)}, \|f'\|_{L^\infty(\Rnu)}\le 1\right\}
    \end{equation}
\end{defn}
We list some fundamental properties of $d_\mathrm{BL}$ below, see \cite{cafasasc} for further details.
\begin{prop}[Properties of $d_\mathrm{BL}$]\label{propdbl}
    Let $\mu,\nu\in L^1_+(\Rnu).$ The following are true:
    \begin{enumerate}[label=(\roman*)]
        \item $d_{\mathrm{BL}}(\mu,\nu)\le \|\mu-\nu\|_{L^1(\Rnu)},$
        \item $ d_{\mathrm{BL}}(\mu,\nu)\le W_1(\mu,\nu):=\inf_{\gamma\in\Pi(\mu_1,\mu_2)}\left\{\int_{\Rnu\times\Rnu} |x-y| d\gamma(x,y)\right\},$ whenever $\mu(\Rnu)=\nu(\Rnu).$
        \item Let $\rho_1,\rho_2,\rho_3\in\mathcal{M}_+(\Omega)\cap L^1(\Omega)$ such that $\|\rho_2\|_{L^1}=\|\rho_3\|_{L^1}.$ Then 
    \begin{equation}
        d_\mathrm{BL}(\rho_1,\rho_3)\le \|\rho_1-\rho_2\|_{L^1(\Omega)}+ W_1(\rho_2,\rho_3).
    \end{equation}
    \end{enumerate}
\end{prop}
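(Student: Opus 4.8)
The plan is to verify the three items directly from the variational definition of $d_{\mathrm{BL}}$, regarding it as the supremum of the linear functionals $\mu\mapsto\int_{\Rnu}f\,\mathrm{d}\mu$ over the admissible class $\mathcal{A}:=\{f:\ \|f\|_{L^\infty(\Rnu)}\le1,\ \|f'\|_{L^\infty(\Rnu)}\le1\}$. For (i), since $\mu,\nu\in L^1_+(\Rnu)$ the difference $\mu-\nu$ is a genuine signed $L^1$ function; for any $f\in\mathcal{A}$ the constraint $\|f\|_{L^\infty}\le1$ gives the pointwise bound $|f(x)(\mu-\nu)(x)|\le|\mu(x)-\nu(x)|$, hence $\int_{\Rnu}f\,\mathrm{d}(\mu-\nu)\le\|\mu-\nu\|_{L^1(\Rnu)}$, and taking the supremum over $f\in\mathcal{A}$ yields the claim.

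For (ii), I would invoke the Kantorovich--Rubinstein duality: when $\mu(\Rnu)=\nu(\Rnu)$ one has $W_1(\mu,\nu)=\sup\{\int_{\Rnu}f\,\mathrm{d}(\mu-\nu):\ \|f'\|_{L^\infty(\Rnu)}\le1\}$ (see, e.g., \cite{santambrogio2015optimal}). The class $\mathcal{A}$ is obtained from the class of $1$-Lipschitz functions appearing here by imposing the additional constraint $\|f\|_{L^\infty(\Rnu)}\le1$, so $\mathcal{A}$ is a subset and the supremum defining $d_{\mathrm{BL}}$ cannot exceed the one defining $W_1$; this gives $d_{\mathrm{BL}}(\mu,\nu)\le W_1(\mu,\nu)$ under the equal-mass hypothesis.

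For (iii), I would first record that $d_{\mathrm{BL}}$ satisfies the triangle inequality: for every $f\in\mathcal{A}$, $\int f\,\mathrm{d}(\rho_1-\rho_3)=\int f\,\mathrm{d}(\rho_1-\rho_2)+\int f\,\mathrm{d}(\rho_2-\rho_3)\le d_{\mathrm{BL}}(\rho_1,\rho_2)+d_{\mathrm{BL}}(\rho_2,\rho_3)$, and taking the supremum over $f$ gives $d_{\mathrm{BL}}(\rho_1,\rho_3)\le d_{\mathrm{BL}}(\rho_1,\rho_2)+d_{\mathrm{BL}}(\rho_2,\rho_3)$. Applying (i) to the first summand and, using the hypothesis $\|\rho_2\|_{L^1}=\|\rho_3\|_{L^1}$, applying (ii) to the second summand yields $d_{\mathrm{BL}}(\rho_1,\rho_3)\le\|\rho_1-\rho_2\|_{L^1(\Omega)}+W_1(\rho_2,\rho_3)$.

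I do not expect a genuine obstacle here; the statement is essentially a packaging of elementary facts. The only points that deserve care are that $W_1$ and the Kantorovich--Rubinstein identity are meaningful only between measures of equal total mass — which is precisely where the equal-mass hypotheses enter in (ii) and (iii) — and that in (i) and (iii) the measures are taken absolutely continuous, so their differences are true $L^1$ functions and the $L^1$ norms on the right-hand sides are well defined; both are ensured by the stated assumptions.
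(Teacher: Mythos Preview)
Your proof is correct. The paper does not actually supply its own proof of this proposition; it merely states the properties and refers the reader to \cite{cafasasc} for details, so there is nothing to compare against beyond noting that your direct verification from the definition of $d_{\mathrm{BL}}$ together with Kantorovich--Rubinstein duality is exactly the standard argument one would expect.
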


In \eqref{maineq} a crucial role is played by the Bessel kernel $K$ that is the kernel of the operator $\Delta-1$.  The kernel $K$ has the following general form
\begin{equation*}
    K(x)=\frac{1}{4\pi} \int_0^\infty t^{-\frac{N}{2}}e^{-\frac{\pi|x|^2}{t}-\frac{1}{4\pi}}\,dt.
\end{equation*}
As a consequence of the expression above it follows that $\|K\|_{L^1(\Rnu)}=\hat{K}(0)=1$, where $\hat{K}$ denotes the Fourier transform of $K$.  
The following lemma lists some fundamental properties of $K$ that we will find useful, refer to \cite{adams1999function,cygan2021stability} for the proofs.
\begin{lem}\cite{cygan2021stability}
    Let $\mathscr{S}'(\Rnu)$ be the space of tempered distributions. Let $\psi\in\mathscr{S}'(\Rnu)$  solve the equation $-\Delta \psi + \psi = f$ for some $f\in \mathscr{S}'(\Rnu).$ Then the following are true:
    \begin{enumerate}[label=(\roman*)]
        \item $\psi = K\ast f ,$ where $K$ is the anti-Fourier transform of  $\hat{K}(\xi) = \frac{1}{1+|\xi|^2}$.
           \item $K$ is radial, depending only on $|x|$ and decreasing with $|x|.$
        \item for $d=1$, $K$ is given by the Morse-type potential $K(x) = \frac{1}{2}e^{-|x|}$.
        \item For $d\ge 2$, $K\in L^1(\Rnu)\cap L^p(\Rnu)$ for each $p\in [1,\frac{d}{d-2})$ and $\nabla K \in L^1(\Rnu)\cap L^q(\Rnu)$ for each $q\in [1, \frac{d}{d-1}).$
    \end{enumerate}
\end{lem}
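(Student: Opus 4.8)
The plan is to treat the four items in order, working with the Fourier transform normalised so that the symbol of $-\Delta$ is $|\xi|^2$, as in the statement. For (i), applying the Fourier transform to $-\Delta\psi+\psi=f$ in $\mathscr{S}'(\Rnu)$ gives $(1+|\xi|^2)\widehat\psi=\widehat f$. Since $\xi\mapsto(1+|\xi|^2)^{-1}$ is smooth and, together with all its derivatives, bounded, multiplication by this symbol maps $\mathscr{S}'(\Rnu)$ into itself, so $\widehat\psi=(1+|\xi|^2)^{-1}\widehat f=\widehat K\,\widehat f$, where $K:=\mathcal F^{-1}\big[(1+|\xi|^2)^{-1}\big]$ is a well-defined tempered distribution because $\widehat K$ is a bounded continuous function. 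Passing from the product $\widehat K\,\widehat f$ to the convolution $K\ast f$ uses the exchange formula $\widehat{K\ast f}=\widehat K\,\widehat f$, which holds immediately when $f$ is compactly supported and extends to the finite measures and $L^1$ data relevant for \eqref{maineq} once $K\in L^1(\Rnu)$ is known from item (iv).

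For (ii) and (iii), I would exploit the subordination identity $(1+|\xi|^2)^{-1}=\int_0^\infty e^{-s}e^{-s|\xi|^2}\,ds$ and invert the Fourier transform under the integral sign, which gives
\begin{equation*}
    K(x)=\int_0^\infty e^{-s}\,(4\pi s)^{-d/2}\,e^{-|x|^2/(4s)}\,ds .
\end{equation*}
From this representation $K$ is manifestly positive, depends only on $|x|$, and is strictly decreasing in $|x|$, since the integrand has all three properties; moreover $\int_{\Rnu}K\,dx=\int_0^\infty e^{-s}\,ds=1$, which recovers $\|K\|_{L^1(\Rnu)}=\widehat K(0)=1$. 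For $d=1$ the integral can be evaluated explicitly — equivalently, one computes $\mathcal F^{-1}\big[(1+\xi^2)^{-1}\big]$ by residues — yielding the Morse-type potential $K(x)=\tfrac12 e^{-|x|}$.

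For (iv), everything reduces to the behaviour of $K$ near the origin and at infinity. Rescaling $s=|x|^2t$ in the subordination formula shows that, for $d\ge 3$, $K(x)\sim c_d|x|^{2-d}$ as $|x|\to 0$ (with a logarithmic singularity when $d=2$), and differentiating under the integral sign gives $|\nabla K(x)|\sim c_d'|x|^{1-d}$. A Laplace-method estimate of $\int_0^\infty s^{-d/2}e^{-s-|x|^2/(4s)}\,ds$, whose exponent is maximised at $s=|x|/2$ with value $-|x|$, gives $K(x)+|\nabla K(x)|\le C(1+|x|)^{N}e^{-|x|}$ for large $|x|$, so the tails lie in every $L^p$ and the only restriction comes from the origin. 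Integrating the power-law singularities in polar coordinates then yields $K\in L^p(\Rnu)$ iff $p<d/(d-2)$ and $\nabla K\in L^q(\Rnu)$ iff $q<d/(d-1)$, while $K,\nabla K\in L^1(\Rnu)$ unconditionally.

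The soft items (i)--(iii) amount to bookkeeping with the Fourier transform and the subordination formula. The real work is in (iv): pinning down the exact power-law singularity of $K$ and $\nabla K$ at the origin together with their exponential decay at infinity, which is what fixes the sharp exponent ranges.
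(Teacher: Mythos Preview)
Your argument is correct. The paper does not supply its own proof of this lemma: it states the properties and writes ``refer to \cite{adams1999function,cygan2021stability} for the proofs,'' so there is no in-paper argument to compare against. Your route via the subordination formula
\[
K(x)=\int_0^\infty e^{-s}(4\pi s)^{-d/2}e^{-|x|^2/(4s)}\,ds
\]
is exactly the standard one (and coincides, up to a change of variables, with the integral representation the paper quotes just before the lemma); the asymptotics you extract from it match the paper's \eqref{eq:Bess_zero}--\eqref{eq:Bess_inf} and give the sharp $L^p$/$L^q$ ranges by the polar-coordinate computation you indicate.

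One small strengthening of your item (i): since $\widehat K(\xi)=(1+|\xi|^2)^{-1}$ lies in $\mathcal{O}_M$ (smooth with all derivatives bounded), $K\in\mathcal{O}_C'$, and therefore $K\ast f$ is well defined for \emph{every} $f\in\mathscr{S}'(\Rnu)$ with $\widehat{K\ast f}=\widehat K\,\widehat f$. You do not need to restrict to compactly supported or $L^1$ data to justify the convolution.
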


It is worth noting that in the two-dimensional case the Bessel kernel satisfies the following qualitative properties
\begin{equation}\label{eq:Bess_zero}
   K(x) \sim -\frac{1}{2\pi}\log |x|,\,\mbox{ as }\,|x|\to 0, 
\end{equation}
and
\begin{equation}\label{eq:Bess_inf}
   K(x)\sim \frac{1}{2\sqrt{2\pi}}\frac{e^{-|x|}}{\sqrt{|x|}},\,\mbox{ as }\,|x|\to +\infty, 
\end{equation}
see \cite{adams1999function} for a more detailed introduction to the Bessel kernel.

We are now ready to list the main assumptions on the nonlinear functions involved in \eqref{maineq}.
\begin{ass}\label{ass}
Let $\gamma>1$ and $\Phi \in C^2\bigl([0,+\infty)\bigr)$ be such that $\Phi(s)\geq 0$ for all $s\geq 0$, and that there exist two constants $0<c_\gamma\le C_\gamma$ such that
\begin{equation}\label{growth_phi}
  c_\gamma s^{\gamma-2}\leq  \Phi''(s)\leq C_\gamma s^{\gamma-2}, \qquad \forall s\geq 0.
\end{equation}
For technical reasons, we introduce the function $f:[0,+\infty)\to\R$ defined by
\begin{equation}\label{def:f}
f(s)=\Phi\left(s^{\frac{2}{\gamma}}\right).
\end{equation}
Concerning the reaction term in the first equation of \eqref{maineq} we assume that $M\in C^1([0,\infty))$ and there exists $k_M>0$ such that,
\begin{equation}\label{ass_M}
\frac{M(s)}{s}\leq k_M(1-s)\quad \mbox{for all}\,s>0.   
\end{equation}
We finally assume that $h\in C^1([0,\infty))$, non-negative, $h(s)=0 $ if and only if $s=0$, and there exists $k_h>0$ such that  
\begin{equation}\label{ass_h}
  h'(s)\leq k_h \quad \mbox{for all}\,s\geq0. 
\end{equation}
\end{ass}

We now state the notion of weak solutions that we are dealing with

\begin{defn}\label{def:weak_sol}(Weak solution)
        Let $T>0$ be fixed. A couple $(\rho,\beta)$ with $\rho:[0,T]\to \mpdu$ and $\beta\in L^\infty([0,T]\times\Rnu)$ is a weak solution to \eqref{maineq} if $\rho\in L^\gamma([0,T]\times\Rnu),$ $\nabla\rho^\frac{\gamma}{2}\in L^2([0,T]\times\Rnu)$ and for all $0\leq t<s\leq T$ we have 
    \begin{align*}
    \int_\Rnu \varphi (x)(\rho(s,x)-\rho(t,x))&dx = -\int_t^s \int_\Rnu  \rho(\sigma,x) \nabla\Phi'\left(\rho(\sigma,x)\right)\cdot \nabla\varphi(x) \, dx d\sigma  \\
    &-\frac{\chi}{2}\int_t^s\int_\Rnu \rho(\sigma,x) \int_\Rnu \nabla K(x-y)\cdot \left(\nabla\varphi(x)-\nabla\varphi(y)\right)\rho(\sigma,y) \, dy \, dx \,d\sigma \\
    &-\chi\int_t^s \int_\Rnu \nabla K \ast \beta(\sigma,x)\cdot \nabla\varphi(x)\rho(\sigma,x) \, dx \, d\sigma\\
    &+\int_t^s \int_\Rnu M(\rho(\sigma,x)) \varphi(x) \, dx \, d\sigma,
\end{align*}
and 
\begin{equation*}
    \int_t^s\int_\Rnu h(\rho(\sigma,x))(1-\beta(\sigma,x))\psi(\sigma,x)\,dx\,dt=0.
\end{equation*}
for all $\varphi\in C_c^\infty(\Rnu)$ and  $\psi\in C^\infty([0,T]\times\Rnu)$.
\end{defn}
The main result of the paper reads as follows
\begin{thm}\label{mainth}
 Let $T>0$ be fixed and $\gamma > \max\{1, \tfrac{d-2}{2}\}$ be given and consider $\rho_0 \in  L^1 (\Rnu ) \cap  L^\gamma (\Rnu )$. Assume that the nonlinear functions $\Phi$, $M$ and $h$  satisfy
the properties in Assumptions \ref{ass}. Then,
there exists a couple $(\rho,\beta)$ that is a weak solution of system \eqref{maineq} in the sense of Definition \ref{def:weak_sol}.
\end{thm}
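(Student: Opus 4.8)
\emph{Proof strategy.} The plan is a two-layer approximation: an \emph{outer} Lie-Trotter-type variational splitting in the spirit of \cite{cafasasc}, separating the aggregation-diffusion block of \eqref{maineq} from the reaction $M(\rho)$, wrapped around an \emph{inner} implicit-explicit JKO minimizing-movement scheme in the spirit of \cite{difrancescofagioli13} that solves the aggregation-diffusion block with the oligodendrocyte density frozen. I fix $T>0$ and a time step $\tau=T/N$, build the approximating pair $(\rho_\tau,\beta_\tau)$ inductively on the grid, and keep $\beta_\tau(t,\cdot)=\mathbb{1}_{A_\tau(t)}$ as the indicator of a family of sets $A_\tau(t)$ that is non-decreasing in $t$, so that the algebraic constraint in \eqref{maineq} holds \emph{exactly} along the discretisation and $\beta$ emerges as the weak-$\ast$ limit of these indicators.

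For the inner scheme I freeze a measurable set $A\subseteq\Rnu$ and $\beta=\mathbb{1}_A$ (note $K\ast\mathbb{1}_A$ and $\nabla K\ast\mathbb{1}_A$ are then bounded, by $\|K\|_{L^1(\Rnu)}=1$ and $\|\nabla K\|_{L^1(\Rnu)}$, and globally Lipschitz), consider on $\mpdm{m}\cap L^\gamma(\Rnu)$ the energy $\F[\rho]=\int_{\Rnu}\Phi(\rho)-\tfrac{\chi}{2}\int_{\Rnu}\rho\,K\ast\rho-\chi\int_{\Rnu}\rho\,K\ast\mathbb{1}_A$, and iterate the semi-implicit step $\rho^{k+1}\in\argmin_{\rho}\{\tfrac{1}{2\tau}\mathrm{d}_{m,2}^{2}(\rho,\rho^{k})+\int_{\Rnu}\Phi(\rho)-\chi\int_{\Rnu}\rho\,K\ast\rho^{k}-\chi\int_{\Rnu}\rho\,K\ast\mathbb{1}_A\}$, linearising the non-convex self-attraction around the previous iterate. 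Existence of minimisers follows from the direct method, since with $\rho^k$ frozen the two nonlocal terms are affine in $\rho$ and bounded below while $\Phi\ge0$ is coercive of order $\gamma$ by \eqref{growth_phi}; the Euler-Lagrange equation, written through the optimal transport map from $\rho^{k}$ to $\rho^{k+1}$, gives the discrete version of the weak formulation of Definition \ref{def:weak_sol} without reaction. Summing the one-step energy inequality together with a flow-interchange estimate then yields, uniformly in $\tau$ and $A$, the bounds $\sum_k\mathrm{d}_{m,2}^2(\rho^{k+1},\rho^k)\le C\tau$, $\|\nabla\rho^{\gamma/2}\|_{L^2((0,T)\times\Rnu)}\le C$, and a uniform second-moment bound. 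The hypothesis $\gamma>\max\{1,\tfrac{d-2}{2}\}$ is exactly what closes these estimates: it guarantees $\rho^{\gamma/2}\in H^1\hookrightarrow L^{2d/(d-2)}$, hence $\rho\in L^{\gamma d/(d-2)}$ with $\gamma d/(d-2)>d/2$, so that $K\ast\rho\in L^\infty$ (using $K\in L^p(\Rnu)$ for $p<d/(d-2)$) and the self-interaction energy is absorbed by an arbitrarily small fraction of $\int_{\Rnu}\Phi(\rho)$ plus mass-dependent terms. Interpolating the iterates in time and applying an Aubin-Lions-type compactness argument, I let $\tau\to0$ to obtain a mass-preserving weak solution of $\partial_t\rho=\nabla\cdot(\rho\nabla(\Phi'(\rho)-\chi K\ast\rho-\chi K\ast\mathbb{1}_A))$ on an arbitrary time interval, satisfying the same uniform bounds.

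For the outer scheme I set $\rho^0=\rho_0$, $A^0=\{\rho_0>0\}$, $\beta^0=\mathbb{1}_{A^0}$, and on $[n\tau,(n+1)\tau]$ first run the inner scheme over a time length $\tau$ with datum $\rho^n$ and $A^n$ frozen, producing $\rho^{n+1/2}$, then solve the pointwise ODE $\dot\rho=M(\rho)$ on $[0,\tau]$ with datum $\rho^{n+1/2}$, producing $\rho^{n+1}$, and finally set $A^{n+1}=A^n\cup\{\rho^{n+1/2}>0\}$, $\beta^{n+1}=\mathbb{1}_{A^{n+1}}$. The bound \eqref{ass_M} (which requires $M(0)=0$ for the reaction flow to preserve non-negativity) keeps $\rho^{n+1}\ge0$, and since $\Phi'(s)M(s)\le k_M\Phi'(s)s\le C\Phi(s)$ and $\tfrac{d}{dt}\!\int_{\Rnu}\rho=\int_{\Rnu}M(\rho)\le k_M\!\int_{\Rnu}\rho$, a Gronwall argument over each macro-step propagates the $L^1\cap L^\gamma$ bounds (and, by a Moser-type iteration, an $L^\infty$ bound), while the bound on $\nabla\rho_\tau^{\gamma/2}$ survives. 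Because $h(0)=0$ forces $\{h(\rho^{n+1/2})>0\}\subseteq\{\rho^{n+1/2}>0\}\subseteq A^{n+1}$, the discrete constraint $h(\rho^{n+1/2})(1-\beta^{n+1})=0$ holds identically and $(A^n)_n$ is increasing. Concatenating and interpolating, and using that transport sub-steps conserve mass (so $W_1(\rho_\tau(t),\rho_\tau(s))\le C|t-s|^{1/2}$ via Proposition \ref{propdbl}(ii) and the JKO estimate) while reaction sub-steps move $\rho_\tau$ by $O(|t-s|)$ in $L^1$ (so $d_{\mathrm{BL}}(\rho_\tau(t),\rho_\tau(s))\le\|\rho_\tau(t)-\rho_\tau(s)\|_{L^1}$ is small by Proposition \ref{propdbl}(i)), Proposition \ref{propdbl}(iii) yields a uniform $d_{\mathrm{BL}}$-equicontinuity of $t\mapsto\rho_\tau(t)$. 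Together with the spatial $L^\gamma$-$H^1$ bounds and tightness from the moment bound, this gives, up to a subsequence, $\rho_\tau\to\rho$ strongly in $L^\gamma((0,T)\times\Rnu)$ and in $C([0,T];d_{\mathrm{BL}})$, $\nabla\rho_\tau^{\gamma/2}\rightharpoonup\nabla\rho^{\gamma/2}$ in $L^2$, and $\beta_\tau\overset{\ast}{\rightharpoonup}\beta$ in $L^\infty$ with $0\le\beta\le1$ and $\nabla K\ast\beta_\tau\to\nabla K\ast\beta$ locally uniformly.

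It remains to pass to the limit in the discrete weak formulation. The strong $L^\gamma$ (hence $L^1$) convergence of $\rho_\tau$, together with the uniform $L^\infty$ bound, handles the diffusion flux and the self-interaction double integral -- whose kernel is controlled by $\|D^2\varphi\|_{L^\infty}\,|\nabla K(x-y)|\,|x-y|$ with $|\nabla K(z)|\,|z|\in L^1(\Rnu)$ integrable across the diagonal, so that $\rho_\tau\otimes\rho_\tau\to\rho\otimes\rho$ strongly in $L^1(\Rnu\times\Rnu)$ suffices -- while the $\beta$-term passes by the locally uniform convergence of $\nabla K\ast\beta_\tau$ against the $L^1$ convergence of $\rho_\tau$, and $M(\rho_\tau)\to M(\rho)$ in $L^1_{\mathrm{loc}}$ by the $L^\infty$ bound and $M\in C^1$. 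For the constraint, since $h,1-\beta\ge0$ it is enough to show $\int_0^T\!\!\int_{\Rnu}h(\rho)(1-\beta)=0$; using $h(s)\le k_h s$ (from $h(0)=0$, $h'\le k_h$) and that $\beta_\tau(t)=\mathbb{1}_{A_\tau(t)}$ contains the support of the piecewise-constant reaction output at all earlier grid times, one bounds
\[
\int_0^T\!\!\int_{\Rnu}h(\rho_\tau(t))(1-\beta_\tau(t))\,dx\,dt\;\le\;k_h\int_0^T\big\|\rho_\tau(t)-\rho_\tau^{\mathrm{pc}}(t-\tau)\big\|_{L^1(\Rnu)}\,dt,
\]
which tends to $0$ as $\tau\to0$ by the time-equicontinuity above, and then passes to the limit on the left-hand side. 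I expect the main obstacle to be precisely this last point and the bookkeeping it forces: one has to carry the $L^\infty$ (or a sharp $L^p$) bound through \emph{both} the $W_2$-contractive transport sub-steps and the only $d_{\mathrm{BL}}$-stable reaction sub-steps so as to keep enough compactness of $\rho_\tau$ to pass to the limit in the \emph{quadratic} nonlocal term and in $M(\rho)$, while at the same time keeping $\beta_\tau$ a monotone family of indicators and quantifying the $d_{\mathrm{BL}}$-displacement of $\operatorname{supp}(\rho_\tau)$ over one step; this interplay is what makes the algebraic constraint survive the limit and the characteristic-function description of $\beta$ available.
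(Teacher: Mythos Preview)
Your two-layer scheme (Lie--Trotter splitting around a JKO step, with $\beta_\tau$ an indicator) is exactly the paper's architecture in Sections~\ref{sec:JKO}--\ref{sec:reaction}, so the proposal is correct and follows essentially the same route. Three minor deviations are worth flagging. First, the paper keeps the self-interaction $-\tfrac{\chi}{2}\int\rho\,K\ast\rho$ fully implicit in $\mF[\rho\,|\,\nu]$ rather than linearising it around $\rho^k$; both choices are standard and lead to the same estimates. Second, the paper sets $\beta_\tau^{n}=\mathbb{1}_{\{\rho^{n-1/2}>0\}}$ (the \emph{current} support, not a monotone union); since the reaction ODE with $M(0)=0$ preserves supports, either choice makes the discrete constraint $h(\rho_\tau)(1-\beta_\tau)=0$ hold \emph{exactly}, after which one passes to the limit directly via strong $L^1$ convergence of $h(\rho_\tau)$ (from $h(s)\le k_h s$) against weak-$\ast$ convergence of $\beta_\tau$ --- your displacement estimate is unnecessary. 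The paper's non-monotone choice additionally yields $\beta=0$ a.e.\ on $\{\rho=0\}$ (Proposition~\ref{d_conv}), which your increasing $A^n$ will not give, but this is not required by Definition~\ref{def:weak_sol}. Third, the $L^\infty$ bound ``by a Moser-type iteration'' is asserted without proof and the paper never uses one: the diffusion flux is handled by rewriting it through $f(\rho^{\gamma/2})$ and the $L^2$--$H^1$ compactness of $\rho_\tau^{\gamma/2}$, and the nonlocal and reaction terms via strong $L^\gamma$ (hence $L^1$) convergence alone --- you should either justify the $L^\infty$ bound or drop it and follow that route.
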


\section{Equation without reaction: the JKO scheme}\label{sec:JKO}
In this section, we consider the reaction-free version of \eqref{maineq}: 
\begin{equation}\label{eq:noreaction}
    \begin{cases}
     \partial_t\rho = \nabla\cdot \left( \rho\nabla\Phi'(\rho) -\chi(\rho\nabla K\ast (\rho + \beta)\right) \quad \text{in}\quad x\in\Rn, t>0,\\
     h(\rho)\left(1-\beta\right)=0,\quad x\in\Rn.
\end{cases}
\end{equation}

Driven by the constraint, we define an \emph{energy} functional for the PDE in \eqref{eq:noreaction}. We construct approximate solutions utilizing the minimizing movement scheme introduced by Jordan-Kinderlehrer-Otto. We obtain a minimizer for a given time-step and establish \textit{a priori} bounds that are independent of the time-step, which facilitates the passage to the limit to obtain a weak solution. The notion of weak solutions we want to catch for the system in \eqref{eq:noreaction} is the following:
\begin{defn}\label{weak_jko}
    Let $T>0$ be fixed. A couple $(\rho,\beta)$ with $\rho:[0,T]\to \mpdu$ and $\beta\in L^\infty([0,T]\times\Rnu)$ is a weak solution to \eqref{eq:noreaction} if $\rho\in L^\gamma([0,T]\times\Rnu),$ $\nabla\rho^\frac{\gamma}{2}\in L^2([0,T]\times\Rnu)$ and for all $0\leq t<s\leq T$ we have 
    \begin{align*}
    \int_\Rnu \varphi (x)(\rho(s,x)-\rho(t,x))dx& = -\int_t^s \int_\Rnu  \rho(\sigma,x)\nabla\Phi'\left(\rho(\sigma,x)\right)\nabla\varphi(x) dx \,d\sigma  \\
    &-\frac{\chi}{2}\int_t^s\int_\Rnu \rho(\sigma,x) \int_\Rnu \nabla K(x-y)\cdot (\nabla\varphi(x)-\nabla\varphi(y))\rho(\sigma,y) \, dy \, dx \, d\sigma \\
    &-\chi\int_t^s \int_\Rnu \nabla K \ast \beta(\sigma,x)\cdot \nabla\varphi(x)\rho(\sigma,x) \, dx \, d\sigma,
\end{align*}
and 
\begin{equation*}
    \int_t^s\int_\Rnu h(\rho(\sigma,x))(1-\beta(\sigma,x))\psi(\sigma,x)\,dx\,dt=0.
\end{equation*}
for all $\varphi\in C_c^\infty(\Rnu)$ and  $\psi\in C^\infty([0,T]\times\Rnu)$.
\end{defn}
Given two probability measures $\rho,\nu \in \mpdu, $ we introduce the following functional:
\begin{equation}\label{eq:im_ex_fun}
    \mF[\rho|\nu] =\mathscr{A}[\rho]+\mathscr{K}[\rho]+\mathscr{S}[\rho|\nu]\coloneq \int_\Rnu \Phi(\rho) \,dx - \frac{\chi}{2}\int_\Rnu\rho K\ast \rho\,dx - \chi \int_\Rnu \rho K\ast \mathbb{1}_{S_\nu}\,dx,
\end{equation}
where 
\begin{equation}
    S_\nu=\left\{x\in\Rnu \,:\,\nu(x)>0\right\},
\end{equation}
and $\mathbb{1}_{S_\nu}$ denotes the characteristic function of the set, namely
\begin{equation*}
    \mathbb{1}_{S_\nu}(x)=\begin{cases}
        1, & \mbox{if } x\in S_\nu,\\
        0, & \mbox{otherwise}.
    \end{cases}
\end{equation*}
Consider an initial density $\rho^0\in\mpdu$ with $\mF[\rho^0|\rho^0]< +\infty$. Fix $T>0$ and let $\tau>0$ and $N \in \mathbb{N}$ be such that $\tau N=T.$ Let $\mathcal{P}_\tau:=\{k\tau|k\in\N, \, 0\le k\le N\}$ be a uniform partition of the time-domain $[0,T].$ We construct the following recursive sequence of measures in $\mpdu$: we set $\rho_\tau^0=\rho^0$ and for any $k\geq 1$
\begin{equation}\label{JKO}
\rho_\tau^k \coloneq\argmin_{\rho\in\mpdu}\left\{\frac{1}{2\tau}W_2^2(\rho_\tau^{k-1},\rho)+\mF[\rho|\rho_\tau^{k-1}]\right\}.
\end{equation}
\begin{rem}
    The well posedness of the above scheme can be obtained from classical arguments in Calculus of Variations. We start observing that by Young inequality for convolutions
    \begin{align*}
   \int_\Rnu \rho(x) K\ast \mathbb{1}_{S_\nu}(x) dx & \le \left|\int_\Rnu \rho(x)  K\ast \mathbb{1}_{S_\nu}(x)(x)dx\right|\\
    & \leq \|\rho\|_{L^1(\Rnu)}\|K\ast \mathbb{1}_{S_\nu}\|_{L^\infty(\Rnu)}\\
     & \leq \|\rho\|_{L^1(\Rnu)}\|K\|_{L^1(\Rnu)}=1.
\end{align*}
Thus, $\mathscr{S}[\rho|\nu]$ is always bounded from below, independently of $\nu$.
For the interaction potential $\mathscr{K}[\rho]$, given 
\begin{equation*}
    \gamma > \max\left\{1, \tfrac{d-2}{2}\right\},
\end{equation*}  
and assuming that $\mathscr{A}[\rho]< +\infty$, we use H\"{o}lder inequality and Young inequality for convolutions to estimate
\[\|\rho K\ast \rho\|_{L^1(\Rnu)} \le \|K\|_{L^r(\Rnu)}\|\rho\|_{L^q(\Rnu)}.\]
for some $r\in \left[1,\tfrac{d}{d-2}\right)$ and $q=\tfrac{r}{r-1}\in [1,\gamma]$

The existence of an optimizer in \eqref{JKO} can be deduced from similar results in \cite{BlCaCa,CaSa}. 
Weak $L^1$ convergence for such a sequence follows from Carleman type inequality, Dunford-Pettis theorem and the uniform control on the second moments of $\rho_\tau^k$, see \cite{CaSa}[Section 2.1], \cite{BlCaCa}[Lemma 2.1 and Lemma 3.1].
\end{rem}

For the sequence $\left\{\rho_\tau^k\right\}_k$ constructed in \eqref{JKO} we introduce the piecewise constant interpolation  $$\rho_\tau:[0,T]\to \mpdu$$ as follows:
\begin{equation}\label{eq:piec_den_rho}
\rho_\tau(t)=\rho_\tau^k, \quad\text{whenever} \quad t\in\left((k-1)\tau,k\tau\right],1\le k \le N.
\end{equation}
\begin{prop}\label{narrow_conv}
    Let $T>0$ be fixed. There exists an absolutely continuous curve $\rho :[0,T]\to \mpdu$ such that the piecewise constant interpolation $\rho_\tau$ introduced in \eqref{eq:piec_den_rho} narrowly converges (up to a subsequence) to $\rho$ uniformly in $[0,T]$ as $\tau \to 0$.
\end{prop}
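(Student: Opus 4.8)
The plan is to follow the now-standard strategy for JKO-type minimizing movement schemes, adapting the argument to the evolving-mass setting (here mass is actually conserved in the transport step, so we work in $\mpdu$ throughout this section). The starting point is the discrete variational inequality coming from the minimality of $\rho_\tau^k$ in \eqref{JKO}: comparing $\rho_\tau^k$ with the competitor $\rho_\tau^{k-1}$ gives
\[
\frac{1}{2\tau}W_2^2(\rho_\tau^{k-1},\rho_\tau^k) \le \mF[\rho_\tau^{k-1}|\rho_\tau^{k-1}] - \mF[\rho_\tau^k|\rho_\tau^{k-1}].
\]
Summing over $k$ and using that $\mF$ is bounded below (the diffusion term $\mathscr{A}$ is nonnegative and, as noted in the Remark, $\mathscr{S}$ and $\mathscr{K}$ are bounded below once $\mathscr{A}$ is controlled) together with $\mF[\rho^0|\rho^0]<+\infty$, one obtains the classical total-square-distance bound $\sum_{k=1}^N \frac{1}{2\tau}W_2^2(\rho_\tau^{k-1},\rho_\tau^k) \le C$, with $C$ independent of $\tau$. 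A subtlety worth spelling out is that $\mF[\cdot|\nu]$ depends on the previous step through $S_\nu$, so the telescoping is not immediate; one handles this by estimating $\mathscr{S}[\rho_\tau^k|\rho_\tau^{k-1}]$ from below uniformly (again by Young's inequality, $|\mathscr{S}[\rho|\nu]|\le \chi$) so that the energy decrease is still coercive up to an additive constant per step that sums to $C T$.

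From the total-square-distance bound, the next step is the usual Hölder/Cauchy--Schwarz estimate on the piecewise constant interpolant: for $0\le s<t\le T$ with $s\in((j-1)\tau,j\tau]$ and $t\in((k-1)\tau,k\tau]$,
\[
W_2(\rho_\tau(s),\rho_\tau(t)) \le \sum_{i=j+1}^{k} W_2(\rho_\tau^{i-1},\rho_\tau^i) \le \Big(\sum_{i=j+1}^k W_2^2(\rho_\tau^{i-1},\rho_\tau^i)\Big)^{1/2}(k-j)^{1/2} \le C^{1/2}\,|t-s+\tau|^{1/2}.
\]
This gives an (approximate) uniform $\tfrac12$-Hölder equicontinuity of the curves $t\mapsto \rho_\tau(t)$ in $W_2$. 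The uniform second-moment bound (quoted from \cite{CaSa,BlCaCa} in the Remark, and also following from the distance bound plus $\mathsf{m}_2[\rho^0]<\infty$) makes $\{\rho_\tau(t)\}$ relatively compact in $(\mpdu, W_2)$ — equivalently, relatively compact in the narrow topology with convergence of second moments — for each fixed $t$. A refined Arzelà--Ascoli argument for curves with values in a space that is only sequentially compact (see Ambrosio--Gigli--Savaré, \cite{ambrosio2008gradient}, or \cite{santambrogio2015optimal}) then yields a subsequence $\tau_n\to 0$ and a limit curve $\rho:[0,T]\to\mpdu$ such that $\rho_{\tau_n}(t)\to\rho(t)$ narrowly, uniformly in $t\in[0,T]$; passing to the limit in the Hölder estimate shows $\rho$ is $\tfrac12$-Hölder in $W_2$, hence absolutely continuous on $[0,T]$ (indeed $W_2$-continuous; absolute continuity in the metric-derivative sense is recovered from $\int_0^T|\rho'|^2\,dt<\infty$, which follows from the same total-square-distance bound in the limit $\tau\to 0$).

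The main obstacle here is precisely the non-autonomous/history-dependent structure of the functional: because $\mF[\cdot|\rho_\tau^{k-1}]$ couples consecutive steps through the characteristic function $\mathbbm{1}_{S_{\rho_\tau^{k-1}}}$, one must be careful that the a priori estimates do not secretly depend on $\tau$ through the (potentially wild) behaviour of the supports $S_{\rho_\tau^{k-1}}$. This is resolved by observing that this term enters the energy estimate only through the uniform bound $\|K\ast\mathbbm{1}_{S_\nu}\|_{L^\infty}\le\|K\|_{L^1}=1$, so it never obstructs coercivity; the genuinely delicate identification of the limit of $\mathbbm{1}_{S_{\rho_\tau^{k-1}}}$ with the oligodendrocyte density $\beta$ is deferred to the later convergence analysis and is not needed for Proposition~\ref{narrow_conv}. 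Everything else is a routine adaptation of the minimizing-movements compactness machinery, so the proof of this proposition is short once the discrete energy inequality and the moment bound are in hand.
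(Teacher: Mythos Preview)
Your overall strategy is the right one and matches the paper, but there is a genuine gap in the step where you handle the non-telescoping $\mathscr{S}$ term. You propose to use the crude uniform bound $|\mathscr{S}[\rho|\nu]|\le\chi$. This gives, at each step,
\[
\frac{1}{2\tau}W_2^2(\rho_\tau^{k-1},\rho_\tau^k)\le \bigl[\mathscr{A}(\rho_\tau^{k-1})-\mathscr{A}(\rho_\tau^k)\bigr]+\bigl[\mathscr{K}(\rho_\tau^{k-1})-\mathscr{K}(\rho_\tau^k)\bigr]+2\chi,
\]
and after summing over $k=1,\dots,N$ you obtain $\sum_k\frac{1}{2\tau}W_2^2\le C_0+2N\chi=C_0+2T\chi/\tau$, which is \emph{not} bounded independently of $\tau$. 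Equivalently, you only get $\sum_k W_2^2\le 2\tau C_0+4T\chi=O(1)$, not the $O(\tau)$ bound you claim. Plugging this into your Cauchy--Schwarz step yields $W_2(\rho_\tau(s),\rho_\tau(t))\le C\cdot((t-s)/\tau+1)^{1/2}$, which blows up as $\tau\to0$ and gives no equicontinuity. So the bound ``$\sum\frac{1}{2\tau}W_2^2\le C$'' does not follow from your argument, and the phrase ``sums to $CT$'' conflates two different scalings.

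The paper resolves this by exploiting that $K\ast\mathbb{1}_{S_\nu}$ is globally Lipschitz with constant $\|\nabla K\|_{L^1}$, so the difference of the $\mathscr{S}$ terms is controlled by $\chi W_1(\rho_\tau^{k-1},\rho_\tau^k)\le\chi W_2(\rho_\tau^{k-1},\rho_\tau^k)$; Young's inequality $\chi W_2\le\frac{1}{4\tau}W_2^2+\chi^2\tau$ then lets you absorb half of the $W_2^2$ term back into the left-hand side, leaving a per-step remainder $\chi^2\tau$ that is $O(\tau)$ rather than $O(1)$. Summing now gives $\sum_k W_2^2\le C\tau+4N\chi^2\tau^2=O(\tau)$, which is precisely what is needed for the H\"older estimate and for Ascoli--Arzel\`a. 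The rest of your outline (moment bound, refined Ascoli--Arzel\`a, $\tfrac12$-H\"older limit curve) is correct and coincides with the paper once this estimate is in place.
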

\begin{proof}
    From the definition of a minimizer in \eqref{JKO}, we have for any $k\in\N, 1\le k\le N,$
\begin{equation} \label{temp0}
\frac{1}{2\tau} W_2^2(\rho_\tau^{k-1},\rho_\tau^k)\le  \mF\left[\rho_\tau^{k-1}|\rho_\tau^{k-1}\right]-\mF\left[\rho_\tau^k|\rho_\tau^{k-1}\right].
\end{equation}
Let $\pi_\tau^k$ be the optimal transport plan between $\rho_\tau^k$ and $\rho_\tau^{k-1}$ corresponding to the quadratic cost function. We note that
\begin{align*}
  \mathscr{S}\left[\rho_\tau^{k-1}|\rho_\tau^{k-1}\right]-\mathscr{S}\left[\rho_\tau^k|\rho_\tau^{k-1}\right]&\le \chi\int_\Rnu |\rho_\tau^k(x)-\rho_\tau^{k-1}(x)||K\ast \mathbb{1}_{S^{k-1}_\tau}(x)|\,dx\\
   &\le\chi\|\nabla K\|_{L^1(\Rnu)} \iint_{\Rnu\times \Rnu} |x-y|\,d\pi_{\tau}^k(x,y)\\
   & \le \chi W_1(\rho_\tau^k,\rho_\tau^{k-1})
   \le \chi W_2(\rho_\tau^k,\rho_\tau^{k-1})\le \frac{1}{4\tau}W_2^2(\rho_\tau^k,\rho_\tau^{k-1})+\chi^2\tau.
\end{align*}

For any $m,n\in\N$ with $1\le m<n \le N$ summing \eqref{temp0} over all $k$ between $m+1$ and $n$ we get
\begin{equation*}
   \frac{1}{4\tau}\sum_{k=m+1}^n W_2^2(\rho_\tau^k,\rho_\tau^{k-1})\le \mathscr{A}(\rho_\tau^m)-\mathscr{A}(\rho_\tau^n)+\mathscr{K}(\rho_\tau^m)-\mathscr{K}(\rho_\tau^n)+\chi^2(n-m)\tau.
\end{equation*}
Since the functional $\mathscr{A}$ and $\mathscr{K}$ are bounded from below we obtain the following estimate for any $n\in \mathbb{N},\, n\le N$
\begin{equation}\label{estim_W_2}
\sum_{k=1}^n W_2^2(\rho_\tau^k,\rho_\tau^{k-1})\le 4\tau\left(\mathscr{A}(\rho^0)+\mathscr{K}(\rho^0)\right)+C\tau+ 4n\chi^2\tau^2.
\end{equation}
Regarding the second moments, we observe that
\begin{align*}
\mathrm{m}_2[\rho_\tau^k]:=\int_{\Rnu} |x|^2 d\rho_\tau^k
    &=\iint_{\Rnu\times \Rnu} |x|^2 d\pi_\tau^k(x,y)\\
    &\le 2 \iint_{\Rnu\times \Rnu} |x-y|^2d\pi_\tau^k(x,y) + 2 \iint_{\Rnu\times \Rnu} |y|^2d\pi_\tau^k(x,y)\\
    &\le 2 W_2^2(\rho_\tau^k,\rho_\tau^{k-1}) + 2 \int_\Rnu |y|^2 d\rho_\tau^{k-1}(y)\\
    &\le 2 \sum_{i=1}^N W_2^2(\rho_\tau^k,\rho_\tau^{k-1}) + 2\mathrm{m}_2[\rho^{k-1}_\tau]\\
    & \le 8\tau\left(\mathscr{A}(\rho^0)+\mathscr{K}(\rho^0)\right)+ 2C\tau + 8 N \chi^2 \tau^2 + 2^N \mathrm{m}_2[\rho^0].
\end{align*}
The second moments $\rho_\tau$ on compact time-intervals are thus uniformly bounded. This implies pre-compactness with respect to the narrow topology on $\mpdu.$ 
Now, for any $0 < s< t < T,$ say $s\in[(m-1)\tau,m\tau)$  and $t\in[n\tau,(n+1)\tau)$ for some $m,n \in \mathbb{N},$ we have
\begin{align*}
W_2(\rho_{\tau}(s),\rho_{\tau}(t)) 
    & \le \sum_{j=m+1}^n W_2(\rho_{\tau}^{j-1},\rho_{\tau}^j)\le \left(\sum_{j=m+1}^n W_2^2(\rho_{\tau}^{j-1},\rho_{\tau}^j)\right)^{\frac{1}{2}}(n-m)^{\frac{1}{2}}\\
    &\le \left(4\tau\left(\mathscr{A}(\rho^0)+\mathscr{K}(\rho^0)\right)+C\tau+4 N\chi^2\tau^2\right)^{\frac{1}{2}}\left(n-m)\right)^{\frac{1}{2}}\label{boundintau}\\
    &\le C\left(t-s\right)^\frac{1}{2}+o(\tau).
\end{align*}
Using the refined version of Ascoli-Arzel\`{a} theorem in Proposition \ref{ascoli}, for any given sequence $\tau_k\to 0,$ we obtain a (non-relabelled) subsequence such that  $\left\{\rho_{\tau_{k}}\right\}_k$ converges narrowly to a H\"{o}lder continuous limit curve $\rho:[0,T]\to \mpdu$ as $k\to +\infty.$
\end{proof}

Narrow convergence is insufficient to pass into the limit in the non-linear terms. We require the sequence to converge strongly in some $L^p$ space. We utilize the so called flow interchange technique \cite{matthes2009family} to obtain suitable bounds on the spatial derivatives of the discrete approximations. We introduce an auxiliary functional 
\[\mathscr{H}[\eta] = \int_\Rnu \left(\eta(x) \log \eta(x) - \eta(x)\right)\,dx\] 

It is well known that this \textit{entropy} functional possesses a flow $G_\mathscr{H}$ given by the heat semi-group. More precisely, for a given $\eta_0\in\mpdu,$ the curve $s\mapsto \eta(s)\coloneq G_\mathscr{H}^s\eta_0$ solves the initial value problem 
$$\partial_t\eta=\Delta \eta,\quad \eta(0)=\eta_0,$$
in the classical sense: $\eta(s)$ is a density function, for every $s>0,$ $\eta\in C^1\left((0,\infty);C^\infty(\Rnu)\cap L^1(\Rnu)\right),$ and if $\eta_0\in L^p(\Rnu),$ then $\eta(s)$ converges to $\eta_0$ in $L^p(\Rnu)$ as $s\downarrow 0.$

\begin{lem}\label{regularity}
    There exists a constant $C$ depending only on $\rho^0$ such that the piecewise constant interpolations $\rho_\tau$ satisfy
    \begin{equation}
        \left\|\rho_\tau^{\frac{\gamma}{2}}\right\|_{L^2(0,T;H^1(\Rnu))}\le C(1+T).
    \end{equation}
\end{lem}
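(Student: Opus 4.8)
The plan is to establish the estimate along the discrete flow $\{\rho_\tau^k\}_k$ by a \emph{flow interchange} argument in the spirit of \cite{matthes2009family}, using as auxiliary flow the heat semigroup $G_\mathscr{H}$ (the $2$-Wasserstein gradient flow of the Boltzmann entropy $\mathscr{H}$), and then summing the resulting discrete entropy--dissipation inequality. First I would fix $k$ and insert the competitor $G_\mathscr{H}^s\rho_\tau^k$ — a smooth bounded probability density, hence admissible in \eqref{JKO} with finite $\mF$ — into the minimality inequality for $\rho_\tau^k$, getting for every $s>0$
\[
\mF[\rho_\tau^k\,|\,\rho_\tau^{k-1}] - \mF[G_\mathscr{H}^s\rho_\tau^k\,|\,\rho_\tau^{k-1}] \le \frac{1}{2\tau}\Bigl(W_2^2(\rho_\tau^{k-1},G_\mathscr{H}^s\rho_\tau^k) - W_2^2(\rho_\tau^{k-1},\rho_\tau^k)\Bigr).
\]
Since $\mathscr{H}$ is geodesically convex on $\mpdu$, its gradient flow satisfies the Evolution Variational Inequality (see \cite{ambrosio2008gradient}); dividing by $s$, letting $s\downarrow0$ and writing $D_\mathscr{H}\mF[\mu]=\limsup_{s\downarrow0}\tfrac1s(\mF[\mu]-\mF[G_\mathscr{H}^s\mu])$, this yields $D_\mathscr{H}\mF[\rho_\tau^k]\le\tfrac1\tau\bigl(\mathscr{H}[\rho_\tau^{k-1}]-\mathscr{H}[\rho_\tau^k]\bigr)$.

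Next I would compute $D_\mathscr{H}$ of the three pieces $\mF=\mathscr{A}+\mathscr{K}+\mathscr{S}$ along $\partial_s\eta=\Delta\eta$, using that $K$ is the kernel of $1-\Delta$ so that $\Delta(K\ast\eta)=K\ast\eta-\eta$. The two convolution functionals are regular ($\nabla K\in L^1(\Rnu)$), so $D_\mathscr{H}\mathscr{K}[\mu]=-\chi\int_\Rnu\mu(\mu-K\ast\mu)\,dx$ and $D_\mathscr{H}\mathscr{S}[\mu]=-\chi\int_\Rnu\mu(\mathbb{1}_{S_\nu}-K\ast\mathbb{1}_{S_\nu})\,dx$; for the diffusive part $\tfrac{d}{ds}\mathscr{A}[G_\mathscr{H}^s\eta]=-\int_\Rnu\Phi''(\eta)|\nabla\eta|^2\,dx$, so by Fatou $D_\mathscr{H}\mathscr{A}[\mu]\ge\int_\Rnu\Phi''(\mu)|\nabla\mu|^2\,dx$ (in particular $\mu^{\gamma/2}\in H^1(\Rnu)$ once the right-hand side is finite). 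Combining these, and using $K\ge0$ (so $\chi\int\mu(\mu-K\ast\mu)\le\chi\int\mu^2$), $\|\rho_\tau^k\|_{L^1}=\|K\|_{L^1}=1$ (so $|\chi\int\rho_\tau^k(\mathbb{1}_{S^{k-1}_\tau}-K\ast\mathbb{1}_{S^{k-1}_\tau})|\le2\chi$), the bound $\Phi''(s)\ge c_\gamma s^{\gamma-2}$ from \eqref{growth_phi}, and $|\nabla\mu^{\gamma/2}|^2=\tfrac{\gamma^2}{4}\mu^{\gamma-2}|\nabla\mu|^2$, I obtain
\[
\frac{4c_\gamma}{\gamma^2}\bigl\|\nabla(\rho_\tau^k)^{\gamma/2}\bigr\|_{L^2(\Rnu)}^2 \le \frac1\tau\bigl(\mathscr{H}[\rho_\tau^{k-1}]-\mathscr{H}[\rho_\tau^k]\bigr) + \chi\int_\Rnu(\rho_\tau^k)^2\,dx + 2\chi .
\]

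Then I would control the aggregation term $\int(\rho_\tau^k)^2$: for $\gamma\ge2$ it is $\le C$ by interpolation between $L^1$ and $L^\gamma$, while for $1<\gamma<2$ the Gagliardo--Nirenberg inequality applied to $v=(\rho_\tau^k)^{\gamma/2}$ gives $\int(\rho_\tau^k)^2=\|v\|_{L^{4/\gamma}}^{4/\gamma}\le C\|\nabla v\|_{L^2}^{2a}\|v\|_{L^2}^{2(1-a)}$ with $a=\tfrac{d(2-\gamma)}{4}$; here the standing hypothesis $\gamma>\max\{1,\tfrac{d-2}{2}\}$ is exactly what ensures $a<1$, so Young's inequality lets me absorb $\chi\int(\rho_\tau^k)^2\le\tfrac{2c_\gamma}{\gamma^2}\|\nabla v\|_{L^2}^2+C(1+\|v\|_{L^2}^2)$, with $\|v\|_{L^2}^2=\|\rho_\tau^k\|_{L^\gamma}^\gamma\le C(1+T)$ by the uniform bound $\sup_k\mathscr{A}[\rho_\tau^k]\le C(1+T)$ obtained in the proof of Proposition~\ref{narrow_conv} together with the coercivity $\Phi(s)\ge c\,s^\gamma-Cs$ following from \eqref{growth_phi}. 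Multiplying by $\tau$, summing over $1\le k\le n$ with $n\tau\le T$ and telescoping the entropy,
\[
\frac{2c_\gamma}{\gamma^2}\sum_{k=1}^{n}\tau\bigl\|\nabla(\rho_\tau^k)^{\gamma/2}\bigr\|_{L^2(\Rnu)}^2 \le \mathscr{H}[\rho^0]-\mathscr{H}[\rho_\tau^n]+C(1+T)\,n\tau ,
\]
where $\mathscr{H}[\rho^0]<+\infty$ since $\rho^0\in\mpdu\cap L^\gamma(\Rnu)$, and $-\mathscr{H}[\rho_\tau^n]\le C(1+\mathrm{m}_2[\rho_\tau^n])\le C(1+T)$ by the standard lower bound for $\mathscr{H}$ on $\mpdu$ and the uniform second-moment estimate of Proposition~\ref{narrow_conv}. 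Recalling \eqref{eq:piec_den_rho}, this reads $\int_0^T\|\nabla\rho_\tau^{\gamma/2}\|_{L^2(\Rnu)}^2\,dt\le C(1+T)$, and combined with $\int_0^T\|\rho_\tau^{\gamma/2}\|_{L^2(\Rnu)}^2\,dt=\int_0^T\|\rho_\tau\|_{L^\gamma(\Rnu)}^\gamma\,dt\le C(1+T)$ it gives the claim with $C$ depending only on $\rho^0$.

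The crux — the step I expect to demand the most care — is the rigorous lower bound $D_\mathscr{H}\mathscr{A}[\mu]\ge\int\Phi''(\mu)|\nabla\mu|^2$: this is at once the step that \emph{produces} the $H^1$-regularity of $\rho_\tau^{\gamma/2}$ and the place where one must verify that $G_\mathscr{H}^s\rho_\tau^k$ is an admissible competitor with finite $\mF$, control $s\mapsto G_\mathscr{H}^s\rho_\tau^k$ in the relevant $L^p$-norms as $s\downarrow0$, and pass to the limit by lower semicontinuity. The second delicate point is the absorption: the hypothesis $\gamma>\max\{1,\tfrac{d-2}{2}\}$ is used precisely to keep the Gagliardo--Nirenberg exponent subcritical ($a<1$), which is what makes the quadratic aggregation contribution controllable; the constraint functional $\mathscr{S}$, by contrast, is harmless because of the sign $K\ge0$ and the mass normalization.
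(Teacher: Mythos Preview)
Your argument follows the same flow--interchange framework as the paper (heat semigroup as auxiliary flow, telescope the entropy, bound $\mathscr H$ below via second moments), so the overall structure is correct. The one substantive difference is your treatment of the self--interaction $\mathscr K$: you use the Bessel identity $\Delta(K\ast\eta)=K\ast\eta-\eta$ to turn the dissipation into $\chi\int\mu^2-\chi\int\mu\,K\ast\mu\le\chi\int\mu^2$, and then absorb $\chi\int(\rho_\tau^k)^2$ into $\|\nabla(\rho_\tau^k)^{\gamma/2}\|_{L^2}^2$ via Gagliardo--Nirenberg. The paper instead bounds $\int\nabla\eta\cdot\nabla K\ast\eta\le\|\nabla\eta\|_{L^2}^2$ by Cauchy--Schwarz and Young's convolution inequality and then asserts this is bounded along the heat flow, avoiding any absorption step. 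Your route is cleaner in that it does not rely on uniform $H^1$ control of $G_\mathscr H^s\rho_\tau^k$ as $s\downarrow0$; the price is the extra interpolation argument.

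One point to tighten: in your Gagliardo--Nirenberg step you write $\int(\rho_\tau^k)^2\le C\|\nabla v\|_{L^2}^{2a}\|v\|_{L^2}^{2(1-a)}$ with $a=\tfrac{d(2-\gamma)}{4}$. The quantity $\tfrac{d(2-\gamma)}{4}$ is the GN interpolation parameter $\theta$ in $\|v\|_{L^{4/\gamma}}\le C\|\nabla v\|_{L^2}^\theta\|v\|_{L^2}^{1-\theta}$, so the actual exponent on $\|\nabla v\|_{L^2}$ after raising to the power $4/\gamma$ is $\tfrac{4\theta}{\gamma}=\tfrac{d(2-\gamma)}{\gamma}$, not $2\theta$. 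The absorption therefore requires $\tfrac{d(2-\gamma)}{\gamma}<2$, i.e.\ $\gamma>\tfrac{2d}{d+2}$, which is stronger than the condition ``$\theta<1$'' you state. This is automatic from $\gamma>\max\{1,\tfrac{d-2}{2}\}$ for $d\le2$ and $d\ge5$, but for $d=3,4$ and $\gamma$ close to $1$ you should check that the argument still closes (or simply record the slightly stronger constraint).
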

\begin{proof} We start by evaluating the dissipation of the functional $\mF$ along the flow associated with the functional $\mathscr{H}$. By shortening the notation and calling $\eta \coloneq G_\mathscr{H}^t\rho^0$ we have
\begin{align*}
    \frac{\dd}{\dt}\mF[\eta|\rho_\tau^{k-1}] &= \int_\Rnu \left( \Phi'(\eta) - \chi K\ast\eta - \chi K\ast \mathbb{1}_{S^{k-1}_\tau}\right)\Delta\eta\,dx\\
    & = -\int_\Rnu \Phi''(\eta)\nabla \eta\cdot \nabla\eta \,dx+ \chi\int_\Rnu \nabla\eta\cdot \nabla K \ast \eta \,dx+ \chi\int_\Rnu \nabla\eta\cdot\nabla K \ast \mathbb{1}_{S^{k-1}_\tau}\,dx\\
    &\leq -C_\gamma\int_\Rnu \eta^{\gamma-2}|\nabla\eta|^2 \,dx+ \chi\int_\Rnu \nabla\eta\cdot \nabla K \ast \eta\,dx + \chi\int_\Rnu \nabla\eta\cdot\nabla K \ast \mathbb{1}_{S^{k-1}_\tau}\,dx\\
    &= -\frac{4C_\gamma}{\gamma^2} \int_\Rnu \left|\nabla\eta^{\frac{\gamma}{2}}\right|^2 \,dx+ \chi\int_\Rnu \nabla\eta\cdot \nabla K \ast \eta \,dx+ \chi\int_\Rnu \nabla\eta\cdot\nabla K \ast \mathbb{1}_{S^{k-1}_\tau}\,dx
\end{align*}
where we used the bound in \eqref{growth_phi}. Using the Cauchy Schwarz inequality we estimate,
\begin{align*}
    \int_\Rnu \nabla\eta \cdot \nabla K \ast \eta\, dx &= \int_\Rnu \nabla \eta \cdot K\ast \nabla \eta \, dx \\
    & \le \|K\ast\nabla \eta\|_{L^2(\Rnu)} \|\nabla\eta\|_{L^2(\Rnu)}\\
    &\le \|K\|_{L^1(\Rnu)}\|\nabla\eta\|_{L^2(\Rnu)}^2=\|\nabla\eta\|_{L^2(\Rnu)}^2.
\end{align*}
Using the properties of the Bessel kernel we have,
\begin{align*}
    \int_\Rnu \nabla\eta\cdot \nabla K \ast \mathbb{1}_{S^{k-1}_\tau} \, dx &= \int_\Rnu \eta \left(\delta_0-K\right)*\mathbb{1}_{S^{k-1}_\tau} \, dx\\
    &=\int_\Rnu \eta \, \mathbb{1}_{S^{k-1}_\tau} \, dx - \int_\Rnu \eta \, K*\mathbb{1}_{S^{k-1}_\tau} \, dx\\
    &\leq  1+\|\eta\|_{L^1}\|K*\mathbb{1}_{S^{k-1}_\tau}\|_{L^\infty}\leq 2.
\end{align*}
Since $\eta\in L^\infty(0,T;H^1_0(\Rnu)),$ $\|\nabla\eta\|_{L^2(\Rnu)}^2$ is bounded, therefore
\begin{align*}
     \frac{\dd}{\dt}\mF[\eta|\rho_\tau^{k-1}] &\le -\frac{4C_\gamma}{\gamma^2} \int \left|\nabla\eta^{\frac{\gamma}{2}}\right|^2 + C(T,\rho^0).
\end{align*}
Define 
\begin{equation*}
    \mathfrak{R}[\eta]:=\frac{4C_\gamma}{\gamma^2}\int \left|\nabla\eta^{\frac{\gamma}{2}}\right|^2 - C(T,\rho^0).
\end{equation*}
We note that
\begin{equation*}\liminf_{s\downarrow 0}\left(-\frac{\dd}{\dt}\mF[G_\mathscr{H}^t\rho^0|\rho_\tau^{k-1}]\big|_{t=s}\right)\ge \mathfrak{R}[\eta].
\end{equation*}
Then from Corollary \ref{flowintercor} it follows that
\begin{equation}\label{temp1}
    \mathscr{H}[\rho_\tau^N]\le \mathscr{H}[\rho^0] - \tau \sum_{k=1}^N \mathfrak{R}[\rho_\tau^k]
\end{equation}
Using standard inequalities for any $s>0,$ 
\begin{equation*}
-\frac{2}{e}\sqrt{s}\le s\log s\le \frac{1}{(\gamma-1)e}s^\gamma,
\end{equation*}
we have that
\begin{align*}
    \mathscr{H}[\eta] &= \int_\Rnu (\eta\log\eta-\eta) \le \frac{1}{(\gamma-1)e}\int_\Rnu \eta^\gamma \le C_1\left(\mF[\eta|\rho_\tau^{k-1}]+C_2\right),
\end{align*}
as well as,
\begin{align*}
    \mathscr{H}[\eta]&\ge -\frac{2\sqrt{\pi}}{e}\left(1+\int_\Rnu |x|^2\eta\right)^\frac{1}{2}.
\end{align*}
From \eqref{temp1} we then obtain
\begin{equation*}
    \tau \sum_{k=1}^N \mathscr{R}[\rho_\tau^k] \le C_1\left(\mF[\rho^0]+C_2\right)+C_3\left(1+\int_\Rnu |x|^2\rho_\tau^N \, dx \right)^\frac{1}{2} \le C.
\end{equation*}
Thus, we have a bound on the $H^1-$ norm of $\rho^{\frac{\gamma}{2}}$
\begin{align*}
    \frac{4C_\gamma}{\gamma^2}\tau \sum_{k=1}^N \left\|(\rho_\tau^k)^{\frac{\gamma}{2}}\right\|_{H^1}^2,
    &\le C(1+T)
\end{align*}
where the constant $C$ depends only on $\rho^0$ and $\|\nabla K\|_{L^1}.$
Thus, it follows  that 
\begin{equation}
    \int_0^T \left\|\rho_\tau(t)^{\frac{\gamma}{2}}\right\|_{H^1(\Rn)}^2 \,dt = \tau \sum_{k=1}^N \left\|(\rho_\tau^k)^\frac{\gamma}{2}\right\|_{H^1(\Rn)}^2 \le C(1+T).
\end{equation}
\end{proof}

Now, we proceed to show that the narrowly converging subsequence of $\left\{\rho_\tau\right\}_{\tau\ge 0}$ converges to a limit $\rho$ in $L^{\gamma}([0,T]\times \Rnu)$. We exploit the extended Aubin-Lions Lemma \ref{aubin}.

\begin{prop}\label{prop:conv}
    The converging sub-sequence $\rho_{\tau_k}$ from Proposition \ref{narrow_conv} converges to a limit function $\rho$ in $L^\gamma([0,T]\times\Rnu)$.
\end{prop}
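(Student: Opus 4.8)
The plan is to upgrade the narrow convergence of Proposition~\ref{narrow_conv} to strong $L^\gamma$ convergence by means of the extended Aubin--Lions lemma (Lemma~\ref{aubin}), using as ``space compactness'' ingredient the estimate of Lemma~\ref{regularity} and as ``time regularity'' ingredient the $W_2$ equi-continuity already produced in the proof of Proposition~\ref{narrow_conv}. Concretely, I would fix the ambient Banach space $X=L^\gamma(\Rnu)$ and introduce the coercive functional
\begin{equation*}
\mathcal{A}[\rho]:=\left\|\rho^{\gamma/2}\right\|^2_{H^1(\Rnu)}+\mathrm{m}_2[\rho],
\end{equation*}
extended by $+\infty$ whenever $\rho$ is not a probability density with $\rho^{\gamma/2}\in H^1(\Rnu)$ and finite second moment. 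Lemma~\ref{regularity} together with the uniform second-moment bound from Proposition~\ref{narrow_conv} gives $\int_0^T\mathcal{A}[\rho_\tau(t)]\,dt\le C$. The structural fact to be checked is that the sublevels $\{\mathcal{A}\le c\}$ are relatively compact in $L^\gamma(\Rnu)$: boundedness of $\rho^{\gamma/2}$ in $H^1(\Rnu)$ yields, by Rellich--Kondrachov, relative compactness of $\rho^{\gamma/2}$ in $L^2(B_R)$, hence of $\rho$ in $L^\gamma(B_R)$ for every $R>0$; boundedness of $\mathrm{m}_2[\rho]$ gives tightness; and the Sobolev embedding $H^1(\Rnu)\hookrightarrow L^{2d/(d-2)}(\Rnu)$ (any finite exponent when $d\le 2$), combined with $\|\rho\|_{L^1(\Rnu)}=1$ and interpolation, produces a uniform bound of $\rho$ in $L^q(\Rnu)$ for some $q=q(d,\gamma)>\gamma$ (indeed $q=\gamma+2/d$ when $d\ge 3$), which turns local compactness plus tightness into relative compactness in the whole of $L^\gamma(\Rnu)$.

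Next I would supply the time equi-continuity. Reading the chain of inequalities in the proof of Proposition~\ref{narrow_conv} along the chosen sequence $\tau_k\to 0$ gives, for $0\le s\le t\le T$,
\begin{equation*}
W_2\bigl(\rho_{\tau_k}(s),\rho_{\tau_k}(t)\bigr)\le C\,|t-s|^{1/2}+C\,\tau_k^{1/2},
\end{equation*}
so that for every $h>0$
\begin{equation*}
\limsup_{k\to\infty}\int_0^{T-h}W_2\bigl(\rho_{\tau_k}(t+h),\rho_{\tau_k}(t)\bigr)\,dt\le C\,T\,h^{1/2}\xrightarrow[h\to 0^+]{}0 .
\end{equation*}
Since $W_2$ metrizes narrow convergence on sets with uniformly bounded second moment, it is lower semicontinuous for strong $L^\gamma$ convergence and separates points of $\{\mathcal{A}\le c\}$; hence the pair $(\mathcal{A},W_2)$ is an admissible input for Lemma~\ref{aubin}.

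Applying Lemma~\ref{aubin} to $\{\rho_{\tau_k}\}$ then yields relative compactness in $L^1(0,T;L^\gamma(\Rnu))$; since narrow convergence is separated, any limit point must coincide with the curve $\rho$ of Proposition~\ref{narrow_conv}, so $\rho_{\tau_k}\to\rho$ in $L^1(0,T;L^\gamma(\Rnu))$ and, up to a further subsequence, $\rho_{\tau_k}\to\rho$ a.e.\ on $(0,T)\times\Rnu$. To reach convergence in $L^\gamma([0,T]\times\Rnu)$ I would finally invoke the uniform space-time bound $\|\rho_{\tau_k}\|_{L^q((0,T)\times\Rnu)}\le C$ with $q>\gamma$ (which follows from Lemma~\ref{regularity}, the Sobolev embedding and the mass bound), yielding equi-integrability of $\{|\rho_{\tau_k}|^\gamma\}$ on $(0,T)\times\Rnu$ and, through the second-moment bound, its tightness; Vitali's convergence theorem then upgrades the a.e.\ convergence to $\rho_{\tau_k}\to\rho$ in $L^\gamma([0,T]\times\Rnu)$, and the usual subsequence argument promotes this to the full sequence.

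The main obstacles I expect are two. First, globalizing the sublevel compactness from $L^\gamma_{\mathrm{loc}}$ to all of $L^\gamma(\Rnu)$: this forces one to pair the Rellich argument with both the tightness coming from the second moments and the equi-integrability coming from the higher-integrability exponent $q>\gamma$ furnished by Sobolev's embedding, and it is the only place where the space bound of Lemma~\ref{regularity} is genuinely exploited beyond a uniform $H^1$ control. Second, matching the time-oscillation estimate -- which carries the discretization remainder $C\tau_k^{1/2}$ -- with the hypotheses of Lemma~\ref{aubin}: this is why the integral equi-continuity has to be stated in the $\limsup_{k\to\infty}$ form rather than with $\sup_k$, so that the remainder vanishes in the limit $k\to\infty$ taken before $h\to 0$.
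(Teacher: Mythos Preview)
Your proposal is correct and follows essentially the same route as the paper: both apply the extended Aubin--Lions lemma (Lemma~\ref{aubin}) on $X=L^\gamma(\Rnu)$ with the coercive functional built from $\|\rho^{\gamma/2}\|_{H^1}^2+\mathrm{m}_2[\rho]$ and the pseudo-distance $W_2$, then identify the limit with the narrow limit and upgrade to $L^\gamma$ convergence in space-time. The only cosmetic differences are that the paper proves sublevel compactness via Fr\'echet--Kolmogorov combined with a Gagliardo--Nirenberg tightness estimate (rather than Rellich plus Sobolev/interpolation), and closes with dominated convergence rather than Vitali; your handling of the $\tau_k^{1/2}$ remainder in the time-oscillation condition is in fact more careful than the paper's.
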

\begin{proof}
Let $U=\left\{\rho_{\tau_k}\right\}_k$. From the estimates in Lemma \ref{regularity} we note that for any $\rho\in U,$ $\rho^{\frac{\gamma}{2}}\in L^2\left([0,T];H^1(\Rnu)\right).$ We define a functional $\mathscr{Y}:L^\gamma(\Rnu)\to [0,\infty],$ 
\begin{equation*}
    \mathscr{Y}[\rho] = \left\{ 
    \begin{matrix}
        \|\nabla\rho^{\frac{\gamma}{2}}\|_{L^2(\Rnu)}^2 + \mathrm{m}_2[\rho], & \quad\text{if both}\quad \nabla\rho^{\frac{\gamma}{2}}\in L^2(\Rnu), \mathrm{m}_2[\rho]<+\infty, \\
        +\infty, & \text{otherwise,}
    \end{matrix}
    \right.
\end{equation*}
and a distance function $\mathrm{d}:L^\gamma(\Rnu)\times L^\gamma(\Rnu)\to \mathbb{R}\cup \{+\infty\},$ 
\begin{equation*}
    \mathrm{d}(\eta,\rho) = \left\{\begin{matrix}
        W_2(\eta,\rho) & \text{~~~~if~~~} \eta,\rho\in\mathscr{P}_2(\Rnu)\\
        +\infty, & \text{otherwise}
    \end{matrix}
    \right.
\end{equation*}
Clearly, $\mathscr{Y}$ is lower semi-continuous and $\mathrm{d}$ is a pseudo-distance.
We claim that the sub-level sets $\mathscr{Y}_C=\{\rho\in L^\gamma(\Rnu) | \mathscr{Y}[\rho]\le C\},$ for any $C>0$ are pre-compact in $L^\gamma(\R^n).$ It is sufficient to show that the associated sets $\mathscr{Z}_C:=\{\eta=\rho^\frac{\gamma}{2}|\rho\in\mathscr{Y}_C\}$ are pre-compact in $L^2(\Rnu).$  Since $\|\nabla\eta\|_{L^2(\Rnu)}\le C$ it is easy to see that $\mathscr{Z}_C$ is $L^2-$equicontinuous.
For the $L^2-$equitightness, given $R>0,$ we take $0 < \delta < 1,$ and estimate
\begin{align*}
    \int_{|x|>R}\eta(x)^2\, dx &\le \frac{1}{R^{2\delta}} \int_\Rnu |x|^{2\delta} \eta(x)^2\, dx\\
    & = \frac{1}{R^{2\delta}}\int_\Rnu \left(|x|^2\eta(x)^\frac{2}{\gamma}\eta(x)^{\frac{2}{\delta}-\frac{2}{\gamma}}\right)^\delta \,dx\\
    &\le \frac{1}{R^{2\delta}} \left(\int_\Rnu |x|^2\eta^\frac{2}{\gamma}(x)\,dx\right)^\delta \left(\int_\Rnu \eta(x)^{\frac{2(\gamma-\delta)}{\gamma(1-\delta)}}\,dx\right)^{1-\delta}\\
    &\le \frac{1}{R^{2\delta}} C^\delta C_{GNS} \|\nabla \eta\|_{L^2(\Rnu)}^{\frac{2 \delta(\gamma-1)}{\gamma}} \|\eta\|_{L^2(\Rnu)}^{2(1-\delta)}
\end{align*}
where we made careful use of Gagliardo-Nirenberg interpolation inequality and used the uniform control on the second moment of $\rho$.
Then by Fr\'{e}chet-Kolmogorov Theorem,    $\mathscr{Z}_C$ is pre-compact in $L^2(\Rnu)$. 

From Lemma \ref{regularity} we have a uniform bound $$\sup_{\rho\in U}\int_0^T \mathscr{Y}[\rho(t)]dt \le T\left(C(1+T) + C(T,\rho^0)\right) <\infty. $$
Further, from the H\"{o}lder continuity of the family $U$ we have for any $\rho\in U,$
\begin{align*}
\limsup_{h\downarrow 0} \int_0^ {T-h} \mathrm{d}(\rho(t+h),\rho(t)) \,dt &= \limsup_{h\downarrow 0} \int_0^ {T-h} W_2(\rho(t+h),\rho(t)) \,dt \\
&\le C \limsup_{h\downarrow 0} \int_0^{T-h}\sqrt{h} \,dt =0.
\end{align*}
Then by Aubin-Lions Lemma \ref{aubin}, there exists a sub-sequence $\{\rho_{\tau_{h}}\}_{h}$ converging in measure (with respect to $t\in [0,T])$ to some limit $\rho^\ast:[0,T]\to L^\gamma(\R^2).$
But this limit coincides with the narrow limit $\rho$ from Proposition \ref{narrow_conv}. Indeed, the entire sequence $\{\rho_{\tau_k}\}_k$ converges to $\rho$ in measure.
Since we have a $\tau-$independent uniform $L^\gamma$ estimate on $\rho_\tau$ by virtue of 
$$\|\rho_\tau^k\|_{L^\gamma}\le \mF[\rho_\tau^k| \rho_\tau^{k-1}]\le \mF[\rho^0|\rho^0],$$
we use the Dominated Convergence theorem to get the strong convergence of the sequence $\{\rho_{\tau_k}\}_k$ in $L^\gamma\left(0,T;L^\gamma(\Rnu)\right),$
$$\int_0^T \|\rho_{\tau_k}(t,\cdot)-\rho(t,\cdot)\|_{L^\gamma(\Rnu)}^\gamma \, dt \to 0$$ as $k\to \infty.$
\end{proof} 

\begin{rem}
Equitightness and $L^\gamma-$convergence in Proposition \ref{prop:conv} easily implies $L^1$ strong convergence of $\rho_\tau$ to $\rho$. 
\end{rem}

We now introduce a piecewise constant interpolation function $\beta_\tau:[0,\infty)\to L^\infty(\Rnu)$ as follows:
\begin{equation}\label{eq:piec_den}\beta_\tau(t,x)=\mathbb{1}_{S^{k}_\tau}(x), \quad\text{whenever} \quad t\in\left((k-1)\tau,k\tau\right),k\ge 1.
\end{equation}
\begin{prop}\label{d_conv}
The piecewise constant interpolating function $\beta_\tau$ defined in \eqref{eq:piec_den} converges  weakly-$\ast$ in $L^\infty([0,T]\times\R^2)$ to a certain function $\beta$. Moreover, the limit function $\beta$ satisfies
\begin{equation*}
    h(\rho)(1-\beta)=0,\quad  \mbox{a.e.}\,(t,x),
\end{equation*}
with $\beta=1,$ $ \rho-\mbox{a.e.}\,(t,x)$ and $\beta(t,x)=0 \ \mbox{a.e.} \text{ in } \{(t,x): \rho(t,x)=0\}$.
\end{prop}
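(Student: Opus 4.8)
The plan is to extract $\beta$ by weak-$\ast$ compactness and then to identify it using the strong convergence of $\rho_\tau$ furnished by Propositions~\ref{narrow_conv} and~\ref{prop:conv}. Since $\beta_\tau(t,x)=\mathbb{1}_{S_\tau^{k}}(x)\in\{0,1\}$, the family $\{\beta_\tau\}$ is bounded in $L^\infty([0,T]\times\Rnu)$, so by Banach--Alaoglu there is a subsequence, not relabelled and chosen to coincide with the one along which $\rho_\tau\to\rho$ in $L^\gamma([0,T]\times\Rnu)\cap L^1([0,T]\times\Rnu)$ and (after one further refinement) a.e. on $[0,T]\times\Rnu$, such that $\beta_\tau\overset{\ast}{\rightharpoonup}\beta$ in $L^\infty([0,T]\times\Rnu)$ with $0\le\beta\le1$ a.e. Moreover, by the definition of the interpolants, $\beta_\tau(t,\cdot)=\mathbb{1}_{\{\rho_\tau(t,\cdot)>0\}}$ for a.e. $t$, so that pointwise on $[0,T]\times\Rnu$ one has the elementary identities
\begin{equation*}
(1-\beta_\tau)\,\rho_\tau\equiv 0\qquad\text{and}\qquad (1-\beta_\tau)\,h(\rho_\tau)\equiv 0,
\end{equation*}
the second one because $h(0)=0$.

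To identify $\beta$ on $\{\rho>0\}$, fix $(t,x)$ with $\rho(t,x)>0$ and $\rho_\tau(t,x)\to\rho(t,x)$: then $\rho_\tau(t,x)>0$ for all small $\tau$, hence $\beta_\tau(t,x)=1$ eventually, so $\beta_\tau\to1$ a.e. on $\{\rho>0\}$. Being uniformly bounded, $\beta_\tau\to1$ also in $L^1_{\mathrm{loc}}(\{\rho>0\})$ by dominated convergence, and comparison with the weak-$\ast$ limit yields $\beta=1$ a.e. on $\{\rho>0\}$, i.e. $\beta=1$ $\rho$-a.e. (Equivalently one passes to the limit in $(1-\beta_\tau)\rho_\tau\equiv0$, since the product of a strongly $L^1$-convergent sequence and a bounded weakly-$\ast$ convergent one converges in the duality with $L^\infty$, giving $(1-\beta)\rho=0$ a.e.) The constraint is then immediate: where $\rho>0$ one has $1-\beta=0$, and where $\rho=0$ one has $h(\rho)=h(0)=0$; in both cases $h(\rho)(1-\beta)=0$ a.e., which is exactly the weak form of the second equation of~\eqref{eq:noreaction}. (The same conclusion also follows by passing to the limit in $(1-\beta_\tau)h(\rho_\tau)\equiv0$, using $h(\rho_\tau)\to h(\rho)$ in $L^1$ by Vitali's theorem, as $0\le h(\rho_\tau)\le k_h\rho_\tau$ is uniformly integrable and converges a.e.)

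Finally, the assertion $\beta=0$ a.e. on $\{\rho=0\}$ is the delicate part, and the one I expect to be the main obstacle. One inequality is free: $\rho_\tau\to\rho$ a.e. gives $\liminf_\tau\beta_\tau\ge\mathbb{1}_{\{\rho>0\}}$ a.e., hence, testing against non-negative $L^1$ functions and using Fatou, $\beta\ge\mathbb{1}_{\{\rho>0\}}$ a.e. --- which only re-confirms the previous step and gives no information where $\rho=0$. The missing inequality $\beta\le\mathbb{1}_{\{\rho>0\}}$ amounts to the non-leakage of the positivity sets, namely $\bigl|\{(t,x)\in[0,T]\times B_R:\rho(t,x)=0<\rho_\tau(t,x)\}\bigr|\to0$ as $\tau\to0$ for every $R>0$, and this cannot be deduced from the strong $L^1$ (or $L^\gamma$) convergence alone, since a vanishingly small amount of mass may occupy a set of non-vanishing measure. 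One must instead exploit the variational structure of~\eqref{JKO}: I would try to establish a discrete finite-speed-of-propagation estimate through the optimality conditions for $\rho_\tau^{k}$, which give, on $\operatorname{supp}\rho_\tau^{k}$, that the Kantorovich potential $\varphi_\tau^{k}$ (with $\mathrm{id}-\nabla\varphi_\tau^{k}$ the optimal map onto $\rho_\tau^{k-1}$) satisfies $\nabla\varphi_\tau^{k}=\tau\bigl(\chi\,\nabla K\ast\rho_\tau^{k}+\chi\,\nabla K\ast\mathbb{1}_{S_\tau^{k-1}}-\nabla\Phi'(\rho_\tau^{k})\bigr)$; bounding $|\nabla\varphi_\tau^{k}|\le C\tau$ there would yield $\operatorname{supp}\rho_\tau^{k}\subseteq\{x:\operatorname{dist}(x,\operatorname{supp}\rho_\tau^{k-1})\le C\tau\}$ and, iterating, a $\tau$-uniform control of $\operatorname{supp}\rho_\tau(t)$ which, combined with the convergence from inside, would pin down $\operatorname{supp}\rho_\tau(t)$ around $\operatorname{supp}\rho(t)$ and hence give $\beta=\mathbb{1}_{\{\rho>0\}}$ a.e. The genuine difficulties here are exactly the two degenerate/non-local features of the model: a uniform bound on the pressure gradient $\nabla\Phi'(\rho_\tau^{k})$ at the discrete free boundary (for which~\eqref{growth_phi} is the relevant structure, in the spirit of Aronson--B\'enilan estimates), and a control of the self-interaction drift $\nabla K\ast\rho_\tau^{k}$ when $\gamma\le d$, where $\nabla K\ast\rho_\tau^{k}$ need not be bounded and one must resort to the symmetrised formulation of the interaction term as in Definition~\ref{def:weak_sol}.
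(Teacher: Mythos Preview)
Your treatment of the weak-$\ast$ extraction of $\beta$, of the identity $\beta=1$ on $\{\rho>0\}$, and of the constraint $h(\rho)(1-\beta)=0$ is correct and is essentially what the paper does: it too starts from the pointwise identity $h(\rho_\tau^k)(1-\mathbb{1}_{S_\tau^k})=0$, passes to the limit using the strong $L^1$/$L^\gamma$ convergence of $\rho_\tau$ together with the Lipschitz growth of $h$, and reads off $\beta=1$ $\rho$-a.e.

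Where you diverge from the paper is the final claim $\beta=0$ a.e.\ on $\{\rho=0\}$. You flag it as the hard point and head towards a discrete finite-speed-of-propagation estimate via the Euler--Lagrange conditions of \eqref{JKO}; this is substantially more machinery than the paper invokes, and your sketch is admittedly incomplete (the pressure-gradient bound at the free boundary and the control of $\nabla K\ast\rho_\tau^{k}$ are left open). The paper instead argues directly and briefly by contradiction: assume there is an open set $B\subset\{\rho=0\}$ with $\beta>0$ a.e.\ on $B$; since $\{\beta_\tau>0\}\cap B=\{\rho_\tau>0\}\cap B$ and $\rho_\tau\to 0$ in $L^1$ on $B$ (hence in measure), the paper asserts $\mu(\{\rho_\tau>0\}\cap B)\to 0$, so $\beta_\tau\to 0$ in measure on $B$, and then dominated convergence along an a.e.\ subsequence combined with uniqueness of the weak-$\ast$ limit forces $\beta=0$ a.e.\ on $B$, a contradiction.

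Note, however, that the very obstruction you articulate --- ``a vanishingly small amount of mass may occupy a set of non-vanishing measure'' --- is precisely the step the paper compresses: convergence in measure only gives $\mu(\{\rho_\tau>\delta\}\cap B)\to 0$ for each fixed $\delta>0$, and the passage to $\mu(\{\rho_\tau>0\}\cap B)\to 0$ is not automatic from $L^1$ convergence alone. So while your route is heavier than the paper's, your diagnosis of where the difficulty sits is accurate; the paper's short argument should be read with that caveat in mind rather than replaced by the propagation-of-support program you outline.
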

\begin{proof}
Since $h(0)=0$, the couple $(   \rho_\tau^k,\mathbb{1}_{S^{k}_\tau})$ obviously satisfies 
\begin{equation}\label{eq:constr_discr}
    h(\rho_\tau^k(x))(1-\mathbb{1}_{S^{k}_\tau}(x))=0,\quad\mbox{ for all }x\in \Rnu,
\end{equation}
for all $k\geq1$. By construction
\begin{equation*}
\beta_\tau \overset{\ast}{\rightharpoonup} \beta
\end{equation*}
as $\tau\to0$ in $L^\infty\left(\left[0,T\right]\times\Rnu\right)$ to some bounded function $\beta$. Consider a test function $\psi\in C^\infty(\left[0,T\right]\times\Rnu)$, then from \eqref{eq:constr_discr} we have
\begin{equation}\label{eq:perweak}
    \int_0^T\int_\Rnu h(\rho_\tau)(1-\beta_\tau)\psi(t,x)\,dx\,dt=0.
\end{equation}
Then, the convergence to $\beta$, results in Proposition \ref{prop:conv} and assumptions on $h$ allow to pass to the limit in the previous equality in order to deduce 
\begin{equation}\label{eq:constrcont}
 h(\rho)(1-\beta)=0,\quad  \mbox{a.e.}-(t,x),
\end{equation}
and
\begin{equation}\label{eq:deqone}
 \beta=1,\quad  \rho-\mbox{a.e.}-(t,x).
\end{equation}
In order to conclude, we need to prove that 
\begin{equation*}
\beta=0, \quad \mbox{ a.e. in }\mathcal{S}_\rho \coloneq \left\{(t,x)\in [0,T]\times\Rnu \,: \,\rho(t,x)=0\right\}.
\end{equation*}
If not, assume there exist an open set $B\subset \mathcal{S}_\rho$ such that $0<\beta(t,x)\leq 1$  a.e.$-(t,x)$ in $B$. 
Thanks to \eqref{eq:piec_den} and \eqref{eq:constr_discr} we have that
\[
\left\{(t,x)\in B \,: \, \beta_\tau(t,x)> 0 \right\} = \left\{(t,x)\in B \,: \, \beta_\tau(t,x) =1 \right\} = B \, \cap S_{\rho_\tau} = \left\{(t,x) \in B \, :\, \rho_\tau(t,x) > 0 \right\}.
\]
Since the $L^1$ convergence of  $\rho_\tau$ to $\rho$ implies convergence in measure we have that for all $\delta >0 $
\[
 \mu\left(\left\{(t,x)\in B\,:\,\rho_\tau(t,x)>\delta\right\}\right)\to 0,\,\mbox{ as }\,\tau\to 0.
\]
This gives 
\[
\mu\left(\left\{(t,x)\in B \,: \, \beta_\tau(t,x)> 0 \right\}\right) \to 0 \mbox{ as }\,\tau\to 0,
\]
that is, $\beta_\tau\to 0$ in measure in $B$. This together with the $L^\infty$ weak$-\ast$ convergence implies $\beta=0$ a.e. in $B$. Indeed, convergence in measure ensures that there exists a subsequence $\beta_{\tau_l}\to 0$ a.e. in $B$. Let $g\in L^1(B)$, then dominated convergence gives
\[
\int_B \beta_{\tau_l}(t,x)g(t,x)\,dx\,dt\to 0.
\]
Since weak$-\ast$ limit should be the same along subsequences
\[
\int_B \beta(t,x)g(t,x)\,dx\,dt= 0,
\]
which is a contradiction to our assumption $\beta>0$ a.e. in $B$.
\end{proof}

\begin{prop}\label{cons_JKO}
    The interpolating sequence $\left\{\left(\rho_\tau,\beta_\tau\right)\right\}$ converges, up to a subsequence, to a weak solution $(\rho,\beta)$ of (\ref{maineq}) in the sense of Definition \ref{weak_jko}.
\end{prop}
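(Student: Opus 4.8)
The plan is to pass to the limit $\tau\to 0$ in the discrete Euler--Lagrange equation of the minimising--movement scheme \eqref{JKO}, feeding in the compactness already collected: the uniform narrow convergence $\rho_\tau(t)\rightharpoonup\rho(t)$ (Proposition \ref{narrow_conv}), the strong convergence $\rho_\tau\to\rho$ in $L^\gamma([0,T]\times\Rnu)$ and in $L^1$ (Proposition \ref{prop:conv}), the uniform bound on $\|\rho_\tau^{\gamma/2}\|_{L^2(0,T;H^1(\Rnu))}$ (Lemma \ref{regularity}), and the weak$-\ast$ convergence $\beta_\tau\overset{\ast}{\rightharpoonup}\beta$ together with the limiting constraint (Proposition \ref{d_conv}). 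All the regularity demanded by Definition \ref{weak_jko} is then in place: $\rho\in L^\gamma([0,T]\times\Rnu)$ and the curve $\rho:[0,T]\to\mpdu$ come from Propositions \ref{prop:conv} and \ref{narrow_conv}; $\beta\in L^\infty$ and the constraint identity from Proposition \ref{d_conv}; and $\nabla\rho^{\gamma/2}\in L^2([0,T]\times\Rnu)$ from Lemma \ref{regularity} once $\nabla\rho^{\gamma/2}$ is identified (below) with the weak $L^2$ limit of $\nabla\rho_\tau^{\gamma/2}$. It remains to establish the integral identity for $\rho$.

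\emph{Discrete Euler--Lagrange equation.} Fix $\varphi\in C_c^\infty(\Rnu)$ and, for small $s$, perturb the minimiser $\rho_\tau^k$ of \eqref{JKO} along $\rho_s:=(\mathrm{id}+s\nabla\varphi)_\#\rho_\tau^k$. Minimality, the standard first--variation formulae for $\mathscr{A}$, $\mathscr{K}$, $\mathscr{S}$, the one--sided differentiability of $\rho\mapsto W_2^2(\rho_\tau^{k-1},\rho)$ along such perturbations (see \cite{ambrosio2008gradient,difrancescofagioli13}), and the Lipschitz Taylor estimate $|(\varphi(x)-\varphi(y))-\langle x-y,\nabla\varphi(x)\rangle|\le\tfrac12\|D^2\varphi\|_{L^\infty}|x-y|^2$, yield for every $k\ge 1$
\begin{equation*}
\int_{\Rnu}\varphi\,\dd(\rho_\tau^k-\rho_\tau^{k-1}) = \tau\,\mathcal{E}_k[\varphi]+R_k[\varphi],\qquad |R_k[\varphi]|\le\tfrac12\|D^2\varphi\|_{L^\infty}\,W_2^2(\rho_\tau^k,\rho_\tau^{k-1}),
\end{equation*}
where $\mathcal{E}_k[\varphi]$ is exactly the sum of the three transport contributions on the right--hand side of Definition \ref{weak_jko} evaluated at the pair $(\rho_\tau^k,\mathbb{1}_{S^{k-1}_\tau})$ (the drift term carrying $\mathbb{1}_{S^{k-1}_\tau}$, rather than $\mathbb{1}_{S^{k}_\tau}$, because the first variation of $\mathscr{S}[\,\cdot\,|\rho_\tau^{k-1}]$ does not act on $\rho$ inside the convolution). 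For $0\le s<t\le T$, choose $m\le n$ with $s\in((m-1)\tau,m\tau]$ and $t\in((n-1)\tau,n\tau]$ and sum over $k=m+1,\dots,n$: the left side telescopes to $\int_{\Rnu}\varphi\,\dd(\rho_\tau(t)-\rho_\tau(s))$; the errors obey $\sum_k|R_k[\varphi]|\le\tfrac12\|D^2\varphi\|_{L^\infty}\sum_k W_2^2(\rho_\tau^k,\rho_\tau^{k-1})=O(\tau)$ by \eqref{estim_W_2}; and the main sum is (up to an $O(\tau)$ endpoint correction) the integral over $(s,t)$ of the piecewise--constant interpolants, with $\widetilde\beta_\tau(\sigma):=\mathbb{1}_{S^{k-1}_\tau}$ for $\sigma\in((k-1)\tau,k\tau]$ in the role of $\beta$; being a time--translate of $\beta_\tau$, $\widetilde\beta_\tau$ shares the weak$-\ast$ limit $\beta$.

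\emph{Passage to the limit.} The left side tends to $\int_{\Rnu}\varphi\,\dd(\rho(t)-\rho(s))$ by Proposition \ref{narrow_conv}. For the diffusion term, write $\rho_\tau\nabla\Phi'(\rho_\tau)=\tfrac{2}{\gamma}\,g(\rho_\tau)\,\nabla\rho_\tau^{\gamma/2}$ with $g(s)=s^{2-\gamma/2}\Phi''(s)$, so that by \eqref{growth_phi} $c_\gamma s^{\gamma/2}\le g(s)\le C_\gamma s^{\gamma/2}$; from $\rho_\tau\to\rho$ in $L^\gamma$ (hence a.e.\ along a subsequence) the generalised dominated convergence theorem gives $g(\rho_\tau)\to g(\rho)$ strongly in $L^2([0,T]\times\Rnu)$, while Lemma \ref{regularity} gives, up to a subsequence, $\rho_\tau^{\gamma/2}\rightharpoonup\rho^{\gamma/2}$ in $L^2(0,T;H^1(\Rnu))$ (the limit being $\rho^{\gamma/2}$ since $\rho_\tau^{\gamma/2}\to\rho^{\gamma/2}$ in $L^2$), hence $\nabla\rho_\tau^{\gamma/2}\rightharpoonup\nabla\rho^{\gamma/2}$ weakly in $L^2$; the strong$\times$weak product then reconstructs $\rho\nabla\Phi'(\rho)$. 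For the $\widetilde\beta_\tau$--drift, moving the convolution onto the data by Fubini rewrites the term as $\int_s^t\!\int\widetilde\beta_\tau(\sigma,y)\,G_\tau(\sigma,y)\,\dd y\,\dd\sigma$ with $G_\tau(\sigma,y)=\int\nabla K(x-y)\cdot\nabla\varphi(x)\rho_\tau(\sigma,x)\,\dd x$; since $\nabla K\in L^1$ and $\rho_\tau\to\rho$ in $L^1$, $G_\tau\to G$ strongly in $L^1([0,T]\times\Rnu)$, and combining with $\widetilde\beta_\tau\overset{\ast}{\rightharpoonup}\beta$ in $L^\infty$ passes the term to the limit. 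The self--interaction term is the delicate one: as $|\nabla\varphi(x)-\nabla\varphi(y)|\le\|D^2\varphi\|_{L^\infty}|x-y|$, the symmetrised kernel $\widetilde W(x,y):=\nabla K(x-y)\cdot(\nabla\varphi(x)-\nabla\varphi(y))$ carries only the locally integrable singularity of $K$ itself, $|\widetilde W(x,y)|\le C_\varphi|x-y|^{2-d}$ near the diagonal, so that $y\mapsto\widetilde W(x,y)$ is $L^{q'}$ uniformly in $x$ only when $q>d/2$. For $d=2$ this is harmless ($\widetilde W$ is bounded and $\rho_\tau\otimes\rho_\tau\to\rho\otimes\rho$ in $L^1$); for $d\ge3$ with $\gamma\le d/2$ one must use the gain of integrability from Lemma \ref{regularity} and the Sobolev embedding $H^1(\Rnu)\hookrightarrow L^{2d/(d-2)}(\Rnu)$, i.e.\ $\rho_\tau$ bounded in $L^\gamma(0,T;L^{\gamma d/(d-2)}(\Rnu))$, where $\gamma d/(d-2)>d/2$ \emph{precisely because} $\gamma>\tfrac{d-2}{2}$. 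Interpolating with the strong $L^\gamma([0,T]\times\Rnu)$ convergence yields strong convergence of $\rho_\tau$ in $L^\gamma(0,T;L^q(\Rnu))$ for some $q>d/2$, which lets $\int\!\int\widetilde W\,\rho_\tau(\sigma)\rho_\tau(\sigma)$ pass to the limit for a.e.\ $\sigma$; since (using $\|\rho_\tau(\sigma)\|_{L^1}=1$) this inner integral is bounded in $L^\gamma(s,t)$ and $\gamma>1$, it is equiintegrable in $\sigma$ and the time integral converges as well. Assembling the three limits with the constraint of Proposition \ref{d_conv} produces the integral identity of Definition \ref{weak_jko}, completing the proof.

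\emph{Main obstacle.} The two steps that demand genuine care are the diffusion term --- recovered from the interplay between strong $L^\gamma$ convergence of $\rho_\tau$ and weak $L^2$ convergence of $\nabla\rho_\tau^{\gamma/2}$ --- and the nonlocal self--interaction in low dimension, where the Bessel kernel's logarithmic ($d=2$) or polynomial ($d\ge3$) singularity forces one to upgrade the integrability of $\rho_\tau$ beyond $L^\gamma$ via the $H^1$ bound of Lemma \ref{regularity}; the hypothesis $\gamma>\max\{1,\tfrac{d-2}{2}\}$ is exactly what makes that upgrade sufficient.
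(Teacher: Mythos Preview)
Your proof is correct and follows essentially the same route as the paper: derive the discrete Euler--Lagrange equation for \eqref{JKO} via push-forward perturbations, telescope over time steps, and pass to the limit using the compactness of Propositions \ref{narrow_conv}, \ref{prop:conv}, \ref{d_conv} and Lemma \ref{regularity}. Your handling of the diffusion term via the strong--weak product $g(\rho_\tau)\nabla\rho_\tau^{\gamma/2}$ is a minor variant of the paper's device of rewriting it as $(f(\rho_\tau^{\gamma/2})-\tfrac{\gamma}{2}\rho_\tau^{\gamma/2}f'(\rho_\tau^{\gamma/2}))\Delta\varphi$ before integrating by parts, and your treatment of the self-interaction term (Sobolev upgrade via Lemma \ref{regularity} when $\gamma\le d/2$) and of the time-shifted $\widetilde\beta_\tau$ in the drift is in fact more explicit than the paper's, which simply refers back to Propositions \ref{narrow_conv} and \ref{d_conv} at that point.
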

\begin{proof} Let $\rho_0, \rho$ be two consecutive minimizers of \eqref{JKO} corresponding to a time-step $\tau>0.$ Let $T$ be the optimal transport map between $\rho_0$ and $\rho$, that is, $\rho=T_\#\rho_0.$ Given $\epsilon>0$ and a test function $\zeta\in C_c^\infty(\Rnu),$ define a map $P^\epsilon(x)=x+\epsilon\zeta(x)$ and let $\rho^\epsilon\coloneq P^\epsilon_\#\rho.$ Since $\rho$ is a minimizer we have 
\begin{equation}\label{1}
    \frac{1}{2\tau}\left(W_2^2(\rho^\epsilon,\rho_0)-W_2^2(\rho,\rho_0)\right)+\mF[\rho^\epsilon|\rho_0]-\mF[\rho|\rho_0] \ge 0.
\end{equation}
We note that
\begin{align*}
    W_2^2(\rho^\epsilon,\rho_0) &\le \int_\Rnu |x-P^\epsilon(T(x))|^2\rho_0(x)\,dx,\\
    W_2^2(\rho,\rho_0) &= \int_\Rnu |x-T(x)|^2\rho_0(x)\,dx,
\end{align*}
and so standard computations gives the following:
\begin{align*}
    \nonumber\frac{1}{2\tau}\left(W_2^2(\rho^\epsilon,\rho_0)-W_2^2(\rho,\rho_0)\right)&\le \frac{1}{2\tau}\int_\Rnu \left(|x-P^\epsilon(T(x))|^2 - |x-T(x)|^2\right)\rho_0(x)\,dx\\
    \nonumber &= \frac{1}{2\tau}\int_\Rnu \left(|x-T(x)-\epsilon\zeta(T(x))|^2-|x-T(x)|^2\right)\rho_0(x)\,dx
\end{align*}
that results in
\begin{align}\label{2}
    \frac{1}{2\tau}\left(W_2^2(\rho^\epsilon,\rho_0)-W_2^2(\rho,\rho_0)\right) & \le -\frac{\epsilon}{\tau}\int_\Rnu (x-T(x))\cdot \zeta(T(x))\rho_0(x)\, dx + o(\epsilon). 
\end{align}
Concerning the terms involving the functional, we have
\begin{align}\label{3}
\mF[\rho^\epsilon| \rho_0]-\mF[\rho| \rho_0] &= \mathscr{A}[\rho^\epsilon]-\mathscr{A}[\rho]+\mathscr{K}[\rho^\epsilon]-\mathscr{K}[\rho]+\mathscr{S}[\rho^\epsilon| \rho_0]-\mathscr{S}[\rho| \rho_0],
\end{align}
and we analyze them term by term. A second order Taylor expansion of $\Phi$ around $\rho$ gives us
\begin{align*}
    \mathscr{A}[\rho^\epsilon]-\mathscr{A}[\rho] =& \int_\Rnu \Phi\left(\rho^\epsilon(x)\right)\, dx - \int_\Rnu \Phi(\rho(x))\,dx\\
    =& \int_\Rnu \Phi\left(\rho^\epsilon(x)\right)\, dx - \int_\Rnu \Phi(\rho(x))\, dx\\
    =& \int_\Rnu \Phi\left(\frac{\rho(x)}{|\det(\nabla P^\epsilon(x))|}\right) |\det(\nabla P^\epsilon(x))|\, dx - \int_\Rnu \Phi(\rho(x))\,dx\\
    =& \int_\Rnu \Phi\left(\rho(x)\right) \left( |\det(\nabla P^\epsilon(x))|-1 \right) \, dx\\
     &+\int_\Rnu \Phi'\left(\rho(x)\right)\rho(x)\left(1-|\det(\nabla P^\epsilon(x))|\right)\, dx +R \\
    \le & \int_\Rnu \left(\Phi\left(\rho(x)\right) - \rho(x) \Phi'\left(\rho(x)\right)\right)\left(|\epsilon \nabla \cdot \zeta(x)| +o(\epsilon) \right) \, dx + R
\end{align*} 
where we expanded the determinant as $\det(\nabla P^\epsilon(x))=1+\epsilon\nabla\cdot\zeta(x)+o(\epsilon)$. The remainder term $R$ is 
\begin{align*}
    R& = \frac{1}{2} \int_\Rnu \Phi''(\tilde \rho^\epsilon(x))\rho(x)^2 \frac{\left(|\det(\nabla P
    ^\epsilon(x))|-1\right)^2}{|\det(\nabla P^\epsilon(x))|} dx.
\end{align*}
We note that
\[
    \frac{\left(|\det(\nabla P
    ^\epsilon(x))|-1\right)^2}{|\det(\nabla P^\epsilon(x))|} \le \frac{\epsilon^2|\nabla\cdot \zeta(x)|^2 +o(\epsilon^2)}{|1+\epsilon \nabla\cdot \zeta(x)+o(\epsilon)|} = o(\epsilon).
\]
This results in $R$ vanishing as $\epsilon\to 0^+.$ 
The nonlocal term can be treated in the standard way
\begin{align*}
 \mathscr{K}[\rho^\epsilon]-\mathscr{K}[\rho] &  = -\frac{\chi}{2}\iint_{\Rnu\times\Rnu} \left(K(P^\epsilon(x)-P^\epsilon(y))-K(x-y)\right)\rho(x)\rho(y)\,dy \,dx \\
    & = -\frac{\chi}{2}\iint_{\Rnu\times\Rnu} \Big( K\big(x-y+\epsilon(\zeta(x)-\zeta(y))\big)-K(x-y)\Big) \rho(y)\rho(x) \,dy\,dx.
\end{align*}
Dividing by $\epsilon$ and sending $\epsilon\to 0^+$ we have  
\begin{align*}
    \lim_{\epsilon\to 0}&\iint_{\Rnu\times\Rnu} \left( \frac{K\big(x-y+\epsilon\left(\zeta(x)-\zeta(y)\right)\big)-K(x-y)}{\epsilon}\right) \rho(y)\rho(x) \,dy\,dx\\
    &=\iint_{\Rnu\times\Rnu}  \nabla K\left(x-y\right)\cdot\left(\zeta(x)-\zeta(y)\right)\rho(y)\rho(x) \,dy\,dx.
\end{align*}
Concernig the last term we first compute
\begin{align*}
   \mathscr{S}[\rho^\epsilon| \rho_0]-\mathscr{S}[\rho| \rho_0] &= -\chi\int_\Rnu \big(K\ast\1(P^\epsilon(x))-K\ast\1(x)\big)\rho(x) \, dx\\
    & =-\chi\int_\Rnu \big(K\ast\1(x+\epsilon\nabla\zeta(x))- K\ast\1(x)\big)\rho(x)\,dx.
\end{align*}
Here we can use the fact that $\nabla K\ast \1\in L^\infty$ as well as $ K\ast \1\in W^{1,1}_{loc}$ in order to deduce that
\[
\lim_{\epsilon\to0}\frac{K\ast \1(x+\epsilon\zeta(x))-K\ast \1(x)}{\epsilon}
=\nabla K\ast \1(x)\cdot\zeta(x),\quad {a.e.}.
\]
Moreover
\[
\left|\frac{K\ast \1(x+\epsilon\zeta(x))-K\ast \1(x)}{\epsilon}\right|
\le \sup_{t\in[0,1]}|\nabla K\ast \1(x+t\epsilon\zeta(x))|\,|\zeta(x)|
\le \|\nabla K\ast \1\|_{L^\infty}\,|\zeta(x)|, 
\]
thus by dominated convergence we obtain
\begin{align*}
    \lim_{\epsilon\to 0}&\int_{\Rnu} \left( \frac{K\ast \1(x+\epsilon\zeta(x))-K\ast \1(x)}{\epsilon}\right) \rho(x)\,dx=\int_{\Rnu}  \nabla K\ast \1(x)\cdot\zeta(x)\rho(x)\,dx.
\end{align*}

Combining all the terms in \eqref{1}, dividing by $\epsilon$ and then sending $\epsilon \to 0^+$ we get
\begin{align*}
    0\le& -\frac{1}{\tau}\int_\Rnu (x-T(x))\cdot \zeta(T(x))\rho_0(x)\, dx \\
    &+ \int_\Rnu \left(\Phi(\rho)-\Phi'\left(\rho(x)\right)\rho(x)\right)|\nabla\cdot\zeta(x)| \,dx \\
    &-\frac{\chi}{2} \int_\Rnu \rho(x) \int_\Rnu \nabla K(x-y)\cdot (\zeta(x)-\zeta(y))\rho(y)\, dy\, dx \\
    &-\chi \int_\Rnu \nabla K \ast \1(x)\cdot \zeta(x)\rho(x)\,dx. 
\end{align*}
Performing the above calculations with $-\epsilon<0,$ we obtain the reverse inequality and hence,
\begin{align*}
    \frac{1}{\tau}\int_\Rnu (x-T(x))\cdot \zeta(T(x))\rho_0(x) dx =&   \int_\Rnu \left(\Phi(\rho(x))-\Phi'\left(\rho(x)\right)\rho(x)\right)\nabla\cdot\zeta(x) dx \\
    &\quad-\frac{\chi}{2} \int_\Rnu \rho(x) \int_\Rnu \nabla K(x-y)\cdot (\zeta(x)-\zeta(y))\rho(y) dy dx \\
    &\quad-\chi \int_\Rnu \nabla K \ast \1(x)\cdot \zeta(x)\rho(x)dx. 
\end{align*}
Let $\varphi\in C^\infty_c(\Rnu)$ such that $\zeta=\nabla\varphi.$ We do a Taylor expansion of $\varphi$ around $T(x)$ and use the definition of push-forward measure and the H\"{o}lder continuity estimate to get
\begin{equation*}
    \frac{1}{\tau}\int_\Rnu (x-T(x))\cdot \nabla\varphi(T(x))\rho_0(x) \,dx =\frac{1}{\tau} \int_\Rnu \varphi(x)\left(\rho_0(x)-\rho(x)\right)\,dx + o(\tau).
\end{equation*}
On the other hand, an integration by parts on the diffusion term gives
\[
\int_\Rnu \left(\Phi(\rho(x))-\Phi'\left(\rho(x)\right)\rho(x)\right)\Delta \varphi(x)\, dx = \int_\Rnu \rho(x) \nabla \Phi'(\rho(x))\cdot \nabla \varphi(x) \,dx
\]

Replacing $\rho_0$ by $\rho_\tau^n$ and $\rho$ by $\rho_\tau^{n+1},$ in the two previous equations, we get
\begin{align*}
    \frac{1}{\tau}\int_\Rnu \varphi(x) \left(\rho_\tau^n(x)-\rho_\tau^{n+1}(x)\right)\,dx+&o(\tau) =  \int_\Rnu \rho^{n+1}_\tau(x) \nabla \Phi'(\rho^{n+1}_\tau(x)) \cdot \nabla\varphi(x)\, dx  \\
    &-\frac{\chi}{2} \int_\Rnu \rho_\tau^{n+1}(x) \int_\Rnu \nabla K(x-y)\cdot (\nabla\varphi(x)-\nabla\varphi(y))\rho_\tau^{n+1}(y)\, dy\, dx \\
    &-\chi \int_\Rnu \nabla K \ast \mathbb{1}_{S^{k}_\tau}(x)\cdot \nabla\varphi(x)\rho_\tau^{n+1}(x)\,dx.  
\end{align*}
Let $0\leq t < s\leq T$ be fixed and set
\[
h= \left[\frac{t}{\tau}\right]+ 1,\,\mbox{ and }\,k= \left[\frac{s}{\tau}\right].
\]
Summing over the indices, we have 
\begin{align*}
    \int_\Rnu \varphi \rho_\tau^{k+1}(x)\,dx&-\int_\Rnu \varphi\rho_\tau^h(x) \, dx+o(\tau) \\
    =& -\tau \sum_{n=h}^{k} \int_\Rnu \rho^{n+1}_\tau(x) \nabla \Phi'(\rho^{n+1}_\tau(x)) \cdot \nabla\varphi(x) \, dx  \\
    &-\frac{\chi}{2} \tau\sum_{n=h}^{k}\int_\Rnu \rho_\tau^{n+1}(x) \int_\Rnu \nabla K(x-y)\cdot (\nabla\varphi(x)-\nabla\varphi(y))\rho_\tau^{n+1}(y)\, dy \, dx \\
    &-\chi\tau\sum_{n=h}^{k} \int_\Rnu \nabla K \ast \mathbb{1}_{S^{k}_\tau}(x)\cdot \nabla\varphi(x)\rho_\tau^{n+1}(x)\,dx,
\end{align*}
that can be rephrased in terms of piecewise constant interpolations
\begin{align*}
    \int_\Rnu \varphi \rho_\tau(s,x)dx&-\int_\Rnu \varphi\rho_\tau(t,x) \,dx+o(\tau) \\
    =& -\int_t^s \int_\Rnu \rho_\tau(\sigma, x) \nabla \Phi'(\rho_\tau(\sigma, x)) \cdot \nabla\varphi(x)\, dx \, d\sigma\\
    &-\frac{\chi}{2}\int_t^s\int_\Rnu \rho_\tau(\sigma,x) \int_\Rnu \nabla K(x-y)\cdot (\nabla\varphi(x)-\nabla\varphi(y))\rho_\tau(\sigma,y) \, dy \, dx \, d\sigma \\
    &-\chi\int_t^s \int_\Rnu \nabla K \ast d_\tau(\sigma,x)\cdot \nabla\varphi(x)\rho_\tau(\sigma,x)\, dx \, d\sigma,
\end{align*}
We are now ready to pass to the $\tau\to 0$ limit. We start by noting that the first term on the RHS can be passed to the limit since
\[
\int_t^s\int_\Rnu \rho_\tau \,\nabla\Phi'\left( \rho_\tau \right) \cdot \nabla \varphi \, dx \, d\sigma = \int_t^s\int_\Rnu \left( f\left(\rho_\tau^\frac{\gamma}{2}\right) - \tfrac{\gamma}{2} f'\left(\rho_\tau^\frac{\gamma}{2}\right) \rho_\tau^\frac{\gamma}{2} \right) \Delta \varphi \, dx \, d\sigma,
\]
thanks to the strong convergence of $\rho_\tau^{\frac{\gamma}{2}}$ in $L^2([0,T]; \Rnu)$ and assumptions on $f$.
Moreover, using the weak convergence (up to a subsequence) of $\nabla \rho_\tau^{\frac{\gamma}{2}}$ and an integration by parts we can conclude that
\begin{equation*}
    \int_t^s \int_\Rnu \left(f\left(\rho_\tau^{\frac{\gamma}{2}}\right)-\frac{\gamma}{2} f'\left(\rho_\tau^{\frac{\gamma}{2}}\right)\rho_\tau^{\frac{\gamma}{2}}\right)\Delta\varphi \,dx \,d\sigma \to \int_t^s \int_\Rnu \rho\nabla\Phi'\left(\rho\right)\nabla\varphi\, dx \,d\sigma 
\end{equation*}
as $\tau\to 0$. Results in Propositions \ref{narrow_conv} and \ref{d_conv} allow to pass to the limit in the remaining terms in order to have the desired weak formulation for $\rho$. Concerning $\beta$, by passing to the limit in \eqref{eq:perweak} we have
\begin{equation*}
    \int_t^s\int_\Rnu h(\rho(\sigma,x))(1-\beta(\sigma,x))\psi(\sigma,x)\,dx\,dt=0,
\end{equation*}
for all $\psi\in C_c^\infty([0,T]\times\Rnu)$.
\end{proof}

\section{Including the reaction term: The Splitting Scheme}\label{sec:reaction}
In this section we consider the main system \eqref{maineq}, that we recall below for convenience 
\begin{equation*}
\begin{cases}
     \partial_t\rho = \nabla\cdot \left( \rho\nabla\Phi'(\rho) -\chi\rho\nabla K\ast (\rho + \beta)\right)+M(\rho), & x\in\Rnu, t>0,\\
     h(\rho)\left(1-\beta\right)=0, & x\in\Rnu, t>0.
\end{cases}
\end{equation*}
where the reaction term $M(\rho)$ satisfies the properties in Assumption \ref{ass}. 

Let $T>0$ and fix a time step $\tau > 0$ such that $N\tau =T$ for some $N\in \mathbb{N},$ and a partition of $[0,T]$ into intervals $[t^n,t^{n+1}]$ with $t^n=n\tau$, $n\in\{0,\cdots,N-1\}.$ Given an initial condition $\rho^0,$ we split the time-integration of \eqref{maineq} on the interval $[t^n,t^{n+1}]$ into two parts: the transport step and the reaction step. Fix $\rho_\tau^0=\rho^0$, then for any $n\in\{0,\cdots,N-1\}$,
\begin{itemize}
 \item Transport step: Given $\rho_\tau^n$ from the reaction step (or the initial datum in the first step), solve the minimization problem
    \begin{equation}\label{a-d}
        \rho^{n+\frac{1}{2}} \in \argmin_{\rho\in\mpdm{m}}\left\{\frac{1}{2\tau} W_2^2\left(\rho,\rho_\tau^n\right)+\mF\left[\rho|\rho_\tau^n\right]\right\}
    \end{equation}
 in $[t^n,t^{n+1}]$ where $m=\|\rho_\tau^n\|_{L^1(\Rnu)}$ and $\mF$ is defined in \eqref{eq:im_ex_fun}. In this step, we also define $\beta^{n+1}(x)=\mathbb{1}_{S^{n+\frac{1}{2}}}(x)$.
    \item Reaction Step: Given $\rho^{n+\frac{1}{2}}\in L^\gamma([0,T]\times\Rnu)$ from the trasport step and the corresponding $\beta^{n+\frac{1}{2}}\in L^\infty([0,T]\times\Rnu)$, we solve 
    \begin{equation}  \label{reac}  
    \begin{cases}
        \partial_t \rho = M(\rho),\\
        \rho(t^n) = \rho^{n+\frac{1}{2}},
    \end{cases}
    \end{equation}
    in $[t^n,t^{n+1}]$ and for all $x\in\Rnu$. Then we set $\rho_\tau^{n+1}(x)=\rho(t^{n+1},x)$.
\end{itemize}

Let $(\rho_\tau^n,\beta_\tau^n)_{1\le n\le N}$ be the sequence obtained from the splitting scheme. Then we define the piecewise constant interpolations by
\begin{equation}\label{interpolation_splitting}
\rho_\tau(t,x)=\rho_\tau^n(x),\quad   \beta_\tau(t,x)=\beta_\tau^n(x), 
\end{equation}
for all $t\in[t^n,t^{n+1}]$ and $x\in\Rnu$. Note that $\beta_\tau$ only evolves in the Transport step, so it brings naturally the estimates proved in the previous Section.

Since the function $M$ is Lipschitz in $\rho$ we have local in time existence and uniqueness. By comparison, the growth condition in Assumption \ref{ass} ensures global well-posedness.

\begin{prop}[Estimates on the reaction steps]\label{reac_est}
Let $(\rho_\tau^n,\beta_\tau^n)_{n\in \mathbb{N}}$ be the sequence obtained from the splitting scheme. Then we have
\begin{align*}
    \|\rho_\tau^{n+1}\|_{L^\gamma(\Rnu)}\leq  \|\rho^{n+\frac{1}{2}}\|_{L^\gamma(\Rnu)}e^{C\tau},\, &    \mathsf{m}_2[\rho_\tau^{n+1}]\le e^{C\tau} \mathsf{m}_2[\rho^{n+\frac{1}{2}}]\\ \mbox{and }\|\nabla\left(\rho_\tau^{n+1}\right)^{\frac{\gamma}{2}}\|_{L^2(\Rnu)}&\leq  e^{C\tau}\|\nabla\left(\rho^{n+\frac{1}{2}}\right)^{\frac{\gamma}{2}}\|_{L^2(\Rnu)}.
\end{align*}
for some constant $C>0$ independent of $n$. 
\end{prop}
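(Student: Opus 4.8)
The reaction step amounts to solving, pointwise in $x$, the scalar ODE $\dot y=M(y)$: writing $\{\Phi_s\}_{s\ge 0}$ for its flow, we have $\rho_\tau^{n+1}(x)=\Phi_\tau\!\left(\rho^{n+\frac{1}{2}}(x)\right)$. By Assumption \ref{ass} the flow is well defined and $C^1$ in the initial datum — local Lipschitz regularity of $M$ gives existence and uniqueness, while $M(0)=0$ together with \eqref{ass_M} (which forces $M(y)\le k_M y$) gives global existence and $\Phi_s([0,\infty))\subset[0,\infty)$. The plan is to derive all three bounds from explicit representations of $\Phi_\tau$ and $\partial_y\Phi_\tau$ obtained by integrating the associated scalar ODEs, so that each estimate reduces to a pointwise inequality in $x$ that is then integrated.

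For the two norms that involve only $\rho$ itself, set $\psi(y):=M(y)/y$ for $y>0$, extended by $M'(0)$ at $0$; then $\frac{d}{ds}\log\Phi_s(y)=\psi(\Phi_s(y))$, whence
\[
\Phi_s(y)=y\,\exp\!\left(\int_0^s\psi(\Phi_\sigma(y))\,\mathrm{d}\sigma\right)\le y\,e^{k_M s},
\]
using $\psi\le k_M(1-\cdot)\le k_M$ from \eqref{ass_M}. Evaluating at $s=\tau$, $y=\rho^{n+\frac{1}{2}}(x)$ and integrating in $x$ against $\mathrm{d}x$, against $|x|^2\,\mathrm{d}x$, and after raising to the power $\gamma$, gives at once $\|\rho_\tau^{n+1}\|_{L^\gamma(\Rnu)}\le e^{k_M\tau}\|\rho^{n+\frac{1}{2}}\|_{L^\gamma(\Rnu)}$ and $\mathsf{m}_2[\rho_\tau^{n+1}]\le e^{k_M\tau}\,\mathsf{m}_2[\rho^{n+\frac{1}{2}}]$.

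For the gradient bound, put $g^{n+1}:=(\rho_\tau^{n+1})^{\gamma/2}$ and $g^{n+\frac{1}{2}}:=(\rho^{n+\frac{1}{2}})^{\gamma/2}$. Since $g^{n+1}(x)=\Phi_\tau(\rho^{n+\frac{1}{2}}(x))^{\gamma/2}$, the chain rule gives, a.e. on $\{\rho^{n+\frac{1}{2}}>0\}$ (both gradients vanishing a.e. on the complement),
\[
\nabla g^{n+1}(x)=\left(\frac{\Phi_\tau(y)}{y}\right)^{\gamma/2-1}\partial_y\Phi_\tau(y)\;\nabla g^{n+\frac{1}{2}}(x),\qquad y=\rho^{n+\frac{1}{2}}(x).
\]
From the variational equation $\frac{d}{ds}\partial_y\Phi_s(y)=M'(\Phi_s(y))\,\partial_y\Phi_s(y)$ we get $\partial_y\Phi_s(y)=\exp\!\left(\int_0^s M'(\Phi_\sigma(y))\,\mathrm{d}\sigma\right)$, and combining with the formula for $\Phi_\tau(y)/y$ above,
\[
\left(\frac{\Phi_\tau(y)}{y}\right)^{\gamma/2-1}\partial_y\Phi_\tau(y)=\exp\!\left(\int_0^\tau\kappa(\Phi_\sigma(y))\,\mathrm{d}\sigma\right),\qquad \kappa(z):=\left(\tfrac{\gamma}{2}-1\right)\frac{M(z)}{z}+M'(z).
\]
Hence, as soon as $\kappa\le C$ on $[0,\infty)$, we obtain $|\nabla g^{n+1}(x)|\le e^{C\tau}\,|\nabla g^{n+\frac{1}{2}}(x)|$ a.e., and squaring and integrating in $x$ gives the third bound; enlarging $C$ so that $C\ge k_M$ makes all three estimates hold with the same, $n$-independent, constant.

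The scalar ODE integrations and the chain rule are routine; the real point — and the step I expect to be the main obstacle — is the uniform-in-$z$ upper bound on $\kappa(z)=(\gamma/2-1)M(z)/z+M'(z)$. It is delicate because $\kappa$ couples $M'$ with a possibly sign-changing multiple of $M(z)/z$: when $1<\gamma<2$ the coefficient $\gamma/2-1$ is negative, so \eqref{ass_M} by itself does not control $(\gamma/2-1)M(z)/z$ from above, and $C^1$-regularity by itself does not control $M'$ from above on all of $[0,\infty)$. One therefore needs, beyond \eqref{ass_M}, the global Lipschitz property of $M$ invoked just before the statement: it bounds $M'$ from above and, since $M(0)=0$, also bounds $M(z)/z$ from below, which closes the argument — and is the reason the constant in the proposition cannot be taken equal to $k_M$.
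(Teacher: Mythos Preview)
Your proof is correct and, at the level of the key estimate, equivalent to the paper's. The difference is presentational: the paper differentiates the three integrals $\int\rho^\gamma$, $\int|x|^2\rho$ and $\int|\nabla\rho^{\gamma/2}|^2$ in time along the reaction flow and closes each by Gr\"onwall, whereas you integrate the scalar ODE and its linearization pointwise in $x$ first and only then integrate in space. Both routes land on precisely the same obstruction for the gradient estimate, namely a uniform upper bound on $\kappa(z)=(\tfrac{\gamma}{2}-1)M(z)/z+M'(z)$; the paper's computation of $\frac{d}{dt}\int|\nabla\rho^{\gamma/2}|^2$ reduces to exactly this multiplier but simply asserts the bound ``using the linear growth control on $M$''. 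Your explicit discussion --- that this bound requires the global Lipschitz property of $M$ invoked just before the proposition (to control $M'$ from above and, when $1<\gamma<2$, also $M(z)/z$ from below via $M(0)=0$), and not merely \eqref{ass_M} --- is a genuine clarification of a point the paper leaves implicit. Your pointwise approach has the minor advantage of making the multiplicative factor completely transparent; the paper's energy-method template is more standard and would adapt more readily if the reaction carried spatial coupling.
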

\begin{proof}
By multiplying the equation in \eqref{reac} by $\rho^{\gamma-1}$ and integrating over $\Rnu$ we have
\begin{align*}
    \frac{1}{\gamma}\frac{d}{dt}\int_\Rnu \rho^\gamma(t,x) dx &= \int_\Rnu \rho^{\gamma-1}(t,x)M(\rho(t,x)) dx\\
    &\le C_M \int_\Rnu \rho^\gamma(t,x) dx.
\end{align*}
Using Gr\"{o}nwall Lemma in the time-interval $[t^n,t^{n+1}],$ we obtain
\[ \|\rho_\tau^{n+1}\|_{L^\gamma(\Rnu)}\leq  \|\rho^{n+\frac{1}{2}}\|_{L^\gamma(\Rnu)}e^{C\tau}.\]
Regarding the second moment of $\rho^{n+\frac{1}{2}},$ we note that 
\begin{align*}
    \frac{d}{dt}\int_\Rnu x^2\rho(t,x)dx = \int_\Rnu x^2 M(\rho(t,x)) dx\le C_M\int_\Rnu x^2 \rho(t,x) dx.
\end{align*}
Integratating in time in the interval $[t^n,t^{n+1}]$ gives the required bound.
Moreover, using the linear growth control on $M$ we have
\begin{align*}
    \frac{1}{\gamma}\frac{d}{dt}\int_\Rnu |\nabla\rho^{\frac{\gamma}{2}}|^2 dx &= \int_\Rnu \nabla\rho^{\frac{\gamma}{2}}\cdot \nabla\left(\rho^{\frac{\gamma}{2}-1}M(\rho)\right) dx\\
    &= \int_\Rnu \nabla\rho^{\frac{\gamma}{2}}\cdot \nabla\rho^{\frac{\gamma}{2}-1}M(\rho) dx+\int_\Rnu \rho^{\frac{\gamma}{2}-1}M'(\rho)\nabla\rho^{\frac{\gamma}{2}}\cdot \nabla \rho dx\\
    &\le C(\gamma,M) \int_\Rnu |\nabla\rho^{\frac{\gamma}{2}}|^2 dx.
\end{align*}
Again, by doing a Gr\"{o}nwall estimate in the time interval $[t^n,t^{n+1}]$ we get the desired estimate.
\end{proof}

The previous computation also yields finiteness of the mass of $\rho^{n+\frac{1}{2}}$
\begin{equation}
    \|\rho_\tau^{n+1}\|_{L^1(\Rnu)}\leq  \|\rho^{n+\frac{1}{2}}\|_{L^1(\Rnu)}e^{C\tau}<\infty
\end{equation}
for any $n\in\{0,\cdots,N\}$. 
As a consequence of the previous Proposition and the results in the previous section we have the following corollary.
\begin{cor}\label{rec_conv}
    Let $(\rho_\tau,\beta_\tau)$ defined in \eqref{interpolation_splitting} be the piecewise interpolation obtained from the splitting scheme. Then we have
\begin{align*}
    \sup_{t\in[0,T]}\|\rho_\tau\|_{L^\gamma(\Rnu)}\leq  C,\, &    \sup_{t\in[0,T]}\mathsf{m}_2[\rho_\tau]\le C\\ \mbox{and } \sup_{t\in[0,T]}\|\nabla\left(\rho_\tau\right)^{\frac{\gamma}{2}}&\|_{L^2(\Rnu)}\leq C,
\end{align*}
for some positive constant $C$ depending only on $T$ and the initial data and not on $\tau$.
\end{cor}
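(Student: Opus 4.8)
The plan is to prove all three bounds by a discrete Gr\"onwall argument that composes, over the $N=T/\tau$ full steps, the reaction estimates of Proposition~\ref{reac_est} with the \emph{a priori} estimates already obtained in Section~\ref{sec:JKO}; the latter are applied to the transport half-step \eqref{a-d}, viewed as a single JKO step of prescribed mass $m=\|\rho_\tau^n\|_{L^1}$, which is legitimate since the arguments of Section~\ref{sec:JKO} work for any fixed mass $m$, using only the minimality of the scheme and the properties of $K$ (in particular $\|K\|_{L^1}=1$ and $\nabla K\in L^1$). First I would fix the mass: iterating $\|\rho_\tau^{n+1}\|_{L^1}\le e^{C\tau}\|\rho^{n+\tfrac{1}{2}}\|_{L^1}$ (which follows from $M(0)=0$ and $M(s)\le k_M s$) and the conservation of mass along \eqref{a-d} gives $\|\rho_\tau^n\|_{L^1}\le e^{CT}\|\rho^0\|_{L^1}$ for every $n$, so that the parameter $m$ appearing in all the per-step estimates below is controlled uniformly in $\tau$.

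The quantities I would propagate are $b_n:=\mathscr{A}[\rho_\tau^n]+\mathscr{K}[\rho_\tau^n]$ --- which, after absorbing $|\mathscr{K}[\rho]|\le\varepsilon\mathscr{A}[\rho]+C_\varepsilon$ by the interpolation arguments of Section~\ref{sec:JKO}, controls $\mathscr{A}[\rho_\tau^n]$ and hence $\|\rho_\tau^n\|_{L^\gamma}^\gamma$ --- together with the second moment $\mathsf{m}_2[\rho_\tau^n]$ and the Boltzmann entropy $\mathscr{H}[\rho_\tau^n]$. For the transport half-step, reproducing the chain \eqref{temp0}--\eqref{estim_W_2} with the increment $\mathscr{S}[\rho_\tau^n|\rho_\tau^n]-\mathscr{S}[\rho^{n+\tfrac{1}{2}}|\rho_\tau^n]$ estimated via $\nabla K\in L^1$ gives
\[
\tfrac{1}{4\tau}\,W_2^2\!\left(\rho^{n+\tfrac{1}{2}},\rho_\tau^n\right)+b_{n+\tfrac{1}{2}}\ \le\ b_n+C\tau,
\]
while $\mathsf{m}_2$ is handled by the $W_2$-triangle inequality as in Proposition~\ref{narrow_conv} and $\mathscr{H}$ by the per-step flow-interchange inequality of Lemma~\ref{regularity}. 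For the reaction half-step, Proposition~\ref{reac_est} gives the $e^{C\tau}$-growth of $\|\cdot\|_{L^\gamma}$, of $\mathsf{m}_2$ and of $\|\nabla(\cdot)^{\gamma/2}\|_{L^2}$, which I would complement with the elementary estimates $\tfrac{d}{dt}\mathscr{A}[\rho]=\int_{\Rnu}\Phi'(\rho)M(\rho)\le C\mathscr{A}[\rho]+Cm$ (using $|\Phi'(s)M(s)|\le C(s+s^\gamma)\le C\Phi(s)+Cs$), $\tfrac{d}{dt}\mathscr{K}[\rho]\le -C\mathscr{K}[\rho]$, and $\big|\tfrac{d}{dt}\mathscr{H}[\rho]\big|\le C\!\int_{\Rnu}\!\rho|\log\rho|\le C$ along $\partial_t\rho=M(\rho)$ (this last one using the Gaussian lower bound for $\mathscr{H}$ together with the already-controlled $\mathsf{m}_2$ and $L^\gamma$ bounds); together these yield $b_{n+1}\le(1+C\tau)b_{n+\tfrac{1}{2}}+C\tau$ and $\big|\mathscr{H}[\rho_\tau^{n+1}]-\mathscr{H}[\rho^{n+\tfrac{1}{2}}]\big|\le C\tau$.

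Composing the two half-steps gives $b_{n+1}\le(1+C\tau)b_n+C\tau$, hence $b_n\le e^{CT}(b_0+C)$ uniformly in $\tau$ and therefore $\sup_n\|\rho_\tau^n\|_{L^\gamma}\le C$. Summing the displayed transport inequality over $n$ then gives $\sum_n W_2^2(\rho^{n+\tfrac{1}{2}},\rho_\tau^n)\le 4\tau\sum_n\big(b_n-b_{n+\tfrac{1}{2}}+C\tau\big)=O(\tau)$, because $\sum_n(b_n-b_{n+\tfrac{1}{2}})$ telescopes up to the $O(\tau)$-per-step reaction increments $b_{n+1}-b_{n+\tfrac{1}{2}}$; combining this with $\sqrt{\mathsf{m}_2[\rho^{n+\tfrac{1}{2}}]}\le\sqrt{\mathsf{m}_2[\rho_\tau^n]}+W_2(\rho^{n+\tfrac{1}{2}},\rho_\tau^n)$, the $e^{C\tau}$-growth of $\mathsf{m}_2$ in the reaction step, and Cauchy--Schwarz ($\sum_n W_2\le(\sum_n W_2^2)^{1/2}N^{1/2}=O(\sqrt{\tau})\,O(\tau^{-1/2})=O(1)$) yields $\sup_n\mathsf{m}_2[\rho_\tau^n]\le C$. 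For the last bound I would sum the per-step flow-interchange inequalities over the transport sub-steps and insert the reaction-step entropy increments, using $\mathscr{H}[\rho^0]<\infty$ and $-\mathscr{H}[\rho_\tau^N]\le C$ (from the uniform second moment), to obtain $\tau\sum_n\|\nabla(\rho^{n+\tfrac{1}{2}})^{\gamma/2}\|_{L^2}^2\le C$, and then transfer it to the $\rho_\tau^n$ by Proposition~\ref{reac_est}; passing to the piecewise interpolations \eqref{interpolation_splitting} concludes. The crux, and the place where Assumptions~\ref{ass} on $M$ are genuinely used, is the $\tau$-uniformity: a naive step-by-step bound of $\|\rho^{n+\tfrac{1}{2}}\|_{L^\gamma}$ in terms of $\|\rho_\tau^n\|_{L^\gamma}$ would lose a fixed multiplicative factor $>1$ at each of the $N\sim\tau^{-1}$ steps, so one must instead propagate the ``conservative'' functional $\mathscr{A}+\mathscr{K}$ (and the second moment and the entropy) and re-extract the $L^\gamma$ norm from it, and one must verify that the reaction flow amplifies each of these functionals only by a factor $1+O(\tau)$.
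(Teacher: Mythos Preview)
Your approach is essentially what the paper has in mind: the paper offers no proof and merely records the corollary ``as a consequence of the previous Proposition and the results in the previous section'', and your discrete Gr\"onwall argument --- propagating $\mathscr{A}+\mathscr{K}$, $\mathsf{m}_2$ and $\mathscr{H}$ through alternating transport and reaction half-steps, each contributing at most a factor $1+O(\tau)$ --- is exactly the way to make that sentence precise.

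One point worth flagging: the third estimate you actually obtain is the time-integrated one, $\tau\sum_n\|\nabla(\rho_\tau^n)^{\gamma/2}\|_{L^2}^2\le C$, i.e.\ an $L^2(0,T;H^1)$ bound matching Lemma~\ref{regularity}, rather than the pointwise-in-time $\sup_{t}\|\nabla(\rho_\tau)^{\gamma/2}\|_{L^2}\le C$ written in the corollary. The per-step flow-interchange inequality only controls $\tau\,\mathfrak{R}[\rho^{n+\tfrac12}]$ by an entropy increment, so without additional structure you cannot bound each $\|\nabla(\rho^{n+\tfrac12})^{\gamma/2}\|_{L^2}$ individually. This is almost certainly a misprint in the statement of the corollary --- Section~\ref{sec:JKO} never claims a pointwise-in-time gradient bound, and the only place Corollary~\ref{rec_conv} is used (Proposition~\ref{final_conv}) needs precisely the $L^2(0,T;H^1)$ control to feed Aubin--Lions --- so your argument delivers what is actually required.
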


\begin{prop}\label{dBL_conv}
The piecewise constant interpolation $\rho_\tau$ defined in \eqref{interpolation_splitting} admits a subsequence converging uniformly to an absolutely continuous curve $\rho$ with respect to $d_\mathrm{BL}$ with values in $\mathcal{M}_{+}(\Rnu)$. Moreover, $\rho$ is a $d_\mathrm{BL}-$continuous function on [0,T].    
\end{prop}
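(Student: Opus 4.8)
The plan is to treat $\{\rho_\tau\}_\tau$ as a family of curves with values in $(\mathcal{M}_{+}(\Rnu),d_\mathrm{BL})$ and to conclude via the refined Ascoli--Arzel\`a theorem of Proposition \ref{ascoli}: it suffices to show that the family is asymptotically equicontinuous in $d_\mathrm{BL}$, with a modulus independent of $\tau$, and that for each fixed $t$ the set $\{\rho_\tau(t)\}_\tau$ is precompact in $(\mathcal{M}_{+}(\Rnu),d_\mathrm{BL})$. Since $\beta_\tau$ is piecewise constant and is only modified in the transport substep, it plays no role here, and the whole argument reduces to a quantitative control of $d_\mathrm{BL}(\rho_\tau^n,\rho_\tau^{n+1})$ that keeps track of both substeps of the scheme.

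First I would split each elementary step through the intermediate density $\rho^{n+\frac{1}{2}}$ produced by \eqref{a-d}, writing $d_\mathrm{BL}(\rho_\tau^n,\rho_\tau^{n+1}) \le d_\mathrm{BL}(\rho_\tau^n,\rho^{n+\frac{1}{2}}) + d_\mathrm{BL}(\rho^{n+\frac{1}{2}},\rho_\tau^{n+1})$. The transport substep preserves mass, so $\|\rho^{n+\frac{1}{2}}\|_{L^1}=\|\rho_\tau^n\|_{L^1}=:m_n\le C$ uniformly by Corollary \ref{rec_conv}, and by Proposition \ref{propdbl}(ii) together with Cauchy--Schwarz on the $W_2$-optimal plan (whose mass is uniformly bounded), $d_\mathrm{BL}(\rho_\tau^n,\rho^{n+\frac{1}{2}}) \le W_1(\rho_\tau^n,\rho^{n+\frac{1}{2}}) \le C\,W_2(\rho_\tau^n,\rho^{n+\frac{1}{2}})$. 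The reaction substep need not preserve mass, so there I would invoke Proposition \ref{propdbl}(i): along the flow of \eqref{reac} one has $\rho_\tau^{n+1}(x)-\rho^{n+\frac{1}{2}}(x)=\int_{t^n}^{t^{n+1}}M(\rho(\sigma,x))\,d\sigma$, and the growth condition \eqref{ass_M} together with the uniform $L^1\cap L^\gamma$ bounds of Corollary \ref{rec_conv} control $\|M(\rho(\sigma,\cdot))\|_{L^1(\Rnu)}$ uniformly, whence $d_\mathrm{BL}(\rho^{n+\frac{1}{2}},\rho_\tau^{n+1})\le\|\rho^{n+\frac{1}{2}}-\rho_\tau^{n+1}\|_{L^1(\Rnu)}\le C\tau$.

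The crucial point is then the estimate $\sum_{n=0}^{N-1}W_2^2(\rho_\tau^n,\rho^{n+\frac{1}{2}})\le C\tau$, with $C$ independent of $\tau$. Repeating the computation leading to \eqref{temp0} and the bound on the increment of the $\mathscr{S}$-term that follows it, now with $\rho_\tau^{k-1}$ replaced by $\rho_\tau^n$ and $\rho_\tau^k$ by $\rho^{n+\frac{1}{2}}$, the minimality in \eqref{a-d} yields
\begin{equation*}
 \tfrac{1}{4\tau}\,W_2^2(\rho_\tau^n,\rho^{n+\frac{1}{2}}) \le \mathscr{A}(\rho_\tau^n)-\mathscr{A}(\rho^{n+\frac{1}{2}}) + \mathscr{K}(\rho_\tau^n)-\mathscr{K}(\rho^{n+\frac{1}{2}}) + \chi^2\tau.
\end{equation*}
Summing over $n$, I would decompose $\mathscr{A}(\rho_\tau^n)-\mathscr{A}(\rho^{n+\frac{1}{2}})=\big(\mathscr{A}(\rho_\tau^n)-\mathscr{A}(\rho_\tau^{n+1})\big)+\big(\mathscr{A}(\rho_\tau^{n+1})-\mathscr{A}(\rho^{n+\frac{1}{2}})\big)$, and similarly for $\mathscr{K}$: the first terms telescope and are bounded (using $\mathscr{A}\ge 0$ and the uniform bound on $|\mathscr{K}|$ coming from Corollary \ref{rec_conv} and the H\"older/Young estimates of Section \ref{sec:JKO}), while the second terms are the increments of $\mathscr{A}$, respectively $\mathscr{K}$, along the ODE flow \eqref{reac} on $[t^n,t^{n+1}]$, and are each $O(\tau)$ thanks to \eqref{growth_phi}, \eqref{ass_M} and the a priori bounds of Corollary \ref{rec_conv}, so that their sum is $O(N\tau)=O(T)$. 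This gives $\sum_n W_2^2(\rho_\tau^n,\rho^{n+\frac{1}{2}})\le C\tau$; combining it with the reaction estimate above (and $(a+b)^2\le 2a^2+2b^2$, $N\tau=T$) I obtain
\begin{equation*}
 \sum_{n=0}^{N-1} d_\mathrm{BL}(\rho_\tau^n,\rho_\tau^{n+1})^2 \le C\tau,
\end{equation*}
which is the discrete counterpart of the absolute-continuity estimate for $\rho_\tau$.

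From here the conclusion is routine. For $0\le t<s\le T$ with $t\in[t^m,t^{m+1})$ and $s\in[t^n,t^{n+1})$, a Cauchy--Schwarz estimate along the chain gives
\begin{equation*}
 d_\mathrm{BL}(\rho_\tau(t),\rho_\tau(s)) \le \Big(\sum_{j=m}^{n-1}d_\mathrm{BL}(\rho_\tau^j,\rho_\tau^{j+1})^2\Big)^{1/2}(n-m)^{1/2} \le C\big(|t-s|+2\tau\big)^{1/2},
\end{equation*}
so $\{\rho_\tau\}_\tau$ is asymptotically equicontinuous with H\"older exponent $1/2$. Precompactness of $\{\rho_\tau(t)\}_\tau$ in $(\mathcal{M}_{+}(\Rnu),d_\mathrm{BL})$ follows from the uniform control of mass and second moments in Corollary \ref{rec_conv} (boundedness plus tightness). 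The refined Ascoli--Arzel\`a theorem of Proposition \ref{ascoli} then produces a subsequence $\rho_{\tau_k}$ converging uniformly with respect to $d_\mathrm{BL}$ to a limit $\rho\in C([0,T];(\mathcal{M}_{+}(\Rnu),d_\mathrm{BL}))$ with $d_\mathrm{BL}(\rho(t),\rho(s))\le C|t-s|^{1/2}$; passing the discrete bound above to the limit by lower semicontinuity shows that $\rho$ is absolutely continuous with respect to $d_\mathrm{BL}$. The step I expect to be the main obstacle is exactly the uniform-in-$\tau$ inequality $\sum_n W_2^2(\rho_\tau^n,\rho^{n+\frac{1}{2}})\le C\tau$: in contrast with Section \ref{sec:JKO}, the energy $\mathscr{A}+\mathscr{K}$ is not dissipated along the full splitting scheme, since it may increase during the reaction substep, so the clean telescoping of Proposition \ref{narrow_conv} is unavailable and one must carefully absorb the per-step increments of $\mathscr{A}$ and $\mathscr{K}$ along \eqref{reac} — which is precisely where the a priori estimates of Corollary \ref{rec_conv} are essential.
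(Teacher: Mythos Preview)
Your proposal is correct and follows essentially the same route as the paper: split each step through $\rho^{n+\frac{1}{2}}$, bound the transport contribution via $W_2$ and the reaction contribution via $L^1$, obtain $\sum_n W_2^2(\rho_\tau^n,\rho^{n+\frac{1}{2}})\le C\tau$, and conclude by Cauchy--Schwarz and the refined Ascoli--Arzel\`a theorem. In fact you are \emph{more} careful than the paper on the key point you flag at the end: the paper simply invokes \eqref{estim_W_2} for the summed $W_2^2$ bound, whereas you correctly observe that this estimate was derived for the pure JKO scheme and that here the energy $\mathscr{A}+\mathscr{K}$ may increase during the reaction substep; your decomposition into a telescoping part plus per-step reaction increments of order $\tau$ (controlled via \eqref{growth_phi}, \eqref{ass_M} and Corollary~\ref{rec_conv}) is exactly the justification the paper leaves implicit.
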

\begin{proof}
We wish to bound the $d_\mathrm{BL}$ distance between two consecutive steps in the splitting scheme. From Proposition \ref{propdbl}, we note that
\begin{align*}
    d_\mathrm{BL}(\rho_\tau^n,\rho_\tau^{n+1})&\le W_1(\rho_\tau^n,\rho^{n+\frac{1}{2}})+ \|\rho^{n+\frac{1}{2}}-\rho_\tau^{n+1}\|_{L^1(\Rnu)}\\
    &\leq W_2(\rho_\tau^n,\rho^{n+\frac{1}{2}})+ \|\rho^{n+\frac{1}{2}}-\rho_\tau^{n+1}\|_{L^1(\Rnu)}.
\end{align*}
From the reaction step we have 
\begin{align*}
    \rho_\tau^{n+1}(x) &= \rho^{n+\frac{1}{2}}(x) + \int_{t^n}^{t^{n+1}} M(\rho(x,t)) \, dt.
\end{align*}
Integrating over $\Rnu$ we get
    \begin{align*}
    \|\rho_\tau^{n+1}- \rho^{n+\frac{1}{2}}\|_{L^1(\Rnu)} &\leq  C_M\int_{t^n}^{t^{n+1}}\int_\Rnu  \rho(t,x) dt dx\\
   &\le \tau C(T,\|\rho^0\|_{L^1}).
\end{align*}
Let $0 \leq s<t\leq T$ be two time instances and $m,n \in \mathbb{N}$ with $m\leq n - 1$, be such that
\[
s\in ((m - 1)\tau ,m\tau ]\, \mbox{ and }\, t\in ((k - 1)\tau ,k\tau ].
\]
Using the properties of the $d_\mathrm{BL}$ distance we estimate
\begin{align*}
    d_\mathrm{BL}(\rho_\tau^m,\rho_\tau^n)&\le \sum_{k=m}^{n-1}d_\mathrm{BL}(\rho_\tau^k,\rho_\tau^{k+1})\\
    & \le \sum_{k=m}^{n-1} \|\rho_\tau^{k+1}- \rho^{k+\frac{1}{2}}\|_{L^1(\Rnu)}+ \sum_{k=m}^{n-1} W_2(\rho_\tau^k,\rho^{k+\frac{1}{2}})\\
    & \le C(T,\|\rho^0\|_{L^1}) (n-m)\tau + C(\rho^0,\chi)\left(\tau(n-m)\right)^\frac{1}{2},
\end{align*}
where the last inequality follows from \eqref{estim_W_2}. For the interpolating sequence we then have
\begin{align*}
    d_{\mathrm{BL}}(\rho_\tau(s),\rho_\tau(t))&\le \left(\sum_{k=m}^{n-1} d^2_\mathrm{BL}(\rho^n,\rho^{n+1})\right)^{\frac{1}{2}}|n-m|^{\frac{1}{2}}\\
    &\le \left(2\sum_{k=m}^{n-1} \|\rho_\tau^{k+1}- \rho^{k+\frac{1}{2}}\|_{L^1(\Rnu)}+ 2\sum_{k=m}^{n-1} W_2(\rho_\tau^k,\rho^{k+\frac{1}{2}})\right)^{\frac{1}{2}}|n-m|^{\frac{1}{2}}\\
    &\le C \left(\tau^2(n-m)+ \tau\right)^\frac{1}{2}|n-m|^{\frac{1}{2}}\\
    & \le C \left(|t-s|^2+|t-s|\right)^\frac{1}{2}\\
    & \le C \sqrt{|t-s|},
\end{align*}
where the constant $C$ only depends on the final time $T$ and the initial parameters. Uniform narrow convergence then follows from the refined version of Ascoli-Arzel\`{a}.
\end{proof}

\begin{prop}\label{final_conv}
      Let $(\rho_\tau, \beta_\tau)_{\tau >0}$ be the interpolating sequence defined in \eqref{interpolation_splitting}. Then there exist two functions $\rho\in L^\gamma(0,T; L^\gamma(\Rnu))$ and $\beta\in L^\infty([0,T]\times \Rnu)$ and subsequences, again denoted by $(\rho_\tau )_{\tau >0}$ and $(\beta_\tau )_{\tau >0}$, such that
      \begin{equation*}
          \rho_\tau \to\rho\,\mbox{ strongly in }L^\gamma(0,T; L^\gamma(\Rnu))\, \mbox{ and }\beta_\tau \to\beta\,\mbox{ weak$-\ast$ in }L^\infty([0,T]\times \Rnu),
      \end{equation*}
      as $\tau\to 0$.
\end{prop}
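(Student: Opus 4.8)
The plan is to re-run the compactness machinery of Section~\ref{sec:JKO}, now fed by the uniform bounds that survive the reaction half-step. First I would collect from Corollary~\ref{rec_conv} the $\tau$-independent estimates $\sup_{[0,T]}\|\rho_\tau\|_{L^\gamma(\Rnu)}\le C$, $\sup_{[0,T]}\mathsf{m}_2[\rho_\tau]\le C$ and $\sup_{[0,T]}\|\nabla\rho_\tau^{\gamma/2}\|_{L^2(\Rnu)}\le C$; the crucial point here is that these are preserved through the ODE step, which is exactly Proposition~\ref{reac_est}, whose $e^{C\tau}$ Gr\"onwall factors accumulate to $e^{CT}$ over the $N=T/\tau$ steps. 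In particular $\sup_\tau\int_0^T \mathscr{Y}[\rho_\tau(t)]\,dt<\infty$, where $\mathscr{Y}$ is the tightness functional used in the proof of Proposition~\ref{prop:conv} (namely $\|\nabla\rho^{\gamma/2}\|_{L^2(\Rnu)}^2+\mathsf{m}_2[\rho]$, and $+\infty$ otherwise), whose sublevel sets are precompact in $L^\gamma(\Rnu)$ by the Fr\'echet--Kolmogorov argument given there ($L^2$-equicontinuity and -equitightness of $\rho^{\gamma/2}$ via Gagliardo--Nirenberg, which is insensitive to the value of the total mass).

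Next I would apply the extended Aubin--Lions Lemma~\ref{aubin}, this time with the \emph{pseudo-distance} taken to be $d_\mathrm{BL}$ rather than $W_2$: this replacement is forced, since the reaction step changes the total mass and consecutive densities need not lie in the same $\mpdm{m}$. Proposition~\ref{dBL_conv} supplies the uniform-in-$\tau$ modulus of continuity $d_\mathrm{BL}(\rho_\tau(s),\rho_\tau(t))\le C\sqrt{|t-s|}$, hence $\limsup_{h\downarrow 0}\int_0^{T-h} d_\mathrm{BL}(\rho_\tau(t+h),\rho_\tau(t))\,dt=0$ uniformly in $\tau$. Lemma~\ref{aubin} then yields a subsequence $\rho_{\tau_k}$ converging in measure with respect to $t\in[0,T]$ to some $\rho^\ast:[0,T]\to L^\gamma(\Rnu)$. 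Extracting a further subsequence that converges to $\rho^\ast(t)$ in $L^\gamma(\Rnu)$ for a.e.\ $t$, and using that $L^\gamma$-convergence implies $d_\mathrm{BL}$-convergence, I identify $\rho^\ast(t)=\rho(t)$ for a.e.\ $t$, where $\rho$ is the $d_\mathrm{BL}$-uniform limit curve produced by Proposition~\ref{dBL_conv} (via Ascoli--Arzel\`a, Proposition~\ref{ascoli}); by uniqueness of the limit the whole subsequence $\rho_{\tau_k}$ converges to $\rho$ in measure in $t$.

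To upgrade to strong convergence I would note that the scalar maps $t\mapsto\|\rho_{\tau_k}(t)-\rho(t)\|_{L^\gamma(\Rnu)}^\gamma$ converge to $0$ in measure on $[0,T]$ and are bounded, uniformly in $k$ and $t$, by $2^\gamma C^\gamma$ (using $\sup_{[0,T]}\|\rho_{\tau_k}\|_{L^\gamma}\le C$ from Corollary~\ref{rec_conv}, together with the same bound for $\rho$ by lower semicontinuity). A constant being integrable on $[0,T]$, Vitali's convergence theorem (equivalently, dominated convergence applied along any a.e.-convergent sub-subsequence, combined with uniqueness of the limit) gives $\int_0^T\|\rho_{\tau_k}(t)-\rho(t)\|_{L^\gamma(\Rnu)}^\gamma\,dt\to0$, that is $\rho_{\tau_k}\to\rho$ strongly in $L^\gamma(0,T;L^\gamma(\Rnu))$, and in particular $\rho\in L^\gamma(0,T;L^\gamma(\Rnu))$. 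For $\beta_\tau$, each $\beta_\tau(t,\cdot)$ is a characteristic function, so $\|\beta_\tau\|_{L^\infty([0,T]\times\Rnu)}\le1$; since $L^\infty([0,T]\times\Rnu)$ is the dual of the separable space $L^1([0,T]\times\Rnu)$, Banach--Alaoglu gives a further subsequence with $\beta_\tau\overset{\ast}{\rightharpoonup}\beta$ in $L^\infty([0,T]\times\Rnu)$, $\|\beta\|_{L^\infty}\le1$ (its relation to $\rho$ through the constraint being recovered afterwards, as in Proposition~\ref{d_conv}).

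I expect the main obstacle to be the reconciliation of the two notions of limit — the convergence in measure in time coming out of Aubin--Lions with the pseudo-distance $d_\mathrm{BL}$ on the mass-varying space, versus the $d_\mathrm{BL}$-uniform limit coming from Ascoli--Arzel\`a — and, relatedly, verifying carefully that the $\mathscr{Y}$-sublevel precompactness genuinely transfers to the setting where the total mass of $\rho_\tau$ varies with $\tau$ and $t$. Once these points are settled, the passage to strong $L^\gamma$ convergence and the weak-$\ast$ extraction for $\beta_\tau$ are routine.
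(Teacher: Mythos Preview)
Your proposal is correct and follows essentially the same strategy as the paper, with one minor structural difference worth noting. The paper's proof does not re-invoke the extended Aubin--Lions Lemma~\ref{aubin}; instead, it exploits that Proposition~\ref{dBL_conv} already gives $\rho_\tau(t)\rightharpoonup\rho(t)$ in $\mathcal{M}_+(\Rnu)$ for \emph{every} $t\in[0,T]$ (not merely a.e.), and then, for each fixed $t$, uses the uniform bounds of Corollary~\ref{rec_conv} together with the Fr\'echet--Kolmogorov argument of Proposition~\ref{prop:conv} to obtain $L^\gamma(\Rnu)$-precompactness of $\{\rho_\tau(t)\}_\tau$. Since any $L^\gamma$ limit point must coincide with the narrow limit $\rho(t)$, this yields $\rho_\tau(t)\to\rho(t)$ in $L^\gamma(\Rnu)$ for every $t$, after which dominated convergence in $t$ gives the $L^\gamma(0,T;L^\gamma(\Rnu))$ convergence. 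Your route --- re-running Aubin--Lions with the pseudo-distance $d_\mathrm{BL}$ in place of $W_2$ --- is an equally valid and natural adaptation; you correctly identified that $W_2$ is unavailable once the reaction step changes the mass, and the identification of the Aubin--Lions limit with the $d_\mathrm{BL}$-uniform limit via equitightness (second moments) is exactly how one closes the argument. Both routes ultimately rest on the same three inputs: Corollary~\ref{rec_conv}, the $d_\mathrm{BL}$-H\"older continuity of Proposition~\ref{dBL_conv}, and the $L^\gamma$-precompactness of the $\mathscr{Y}$-sublevels.
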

  \begin{proof}
   We note that $\beta_\tau$ only evolves in the transport step and it is constant in the reaction step. The convergence of $\beta_\tau$ to $\beta$ then follows from Proposition \ref{d_conv}. Concerning $\rho_\tau$, by Proposition \ref{dBL_conv} there exists a (relabelled) subsequence $\rho_\tau$, such that $\rho_\tau (t) \rightharpoonup \rho(t)$ in $\mathcal{M}_{+}(\Rnu)$, for all $t\in [0,T]$. Then we can combine the results in Lemma \ref{regularity} and Proposition \ref{reac_est} in order to deduce the pointwise-in-time convergence in $L^\gamma(\Rnu)$. By recalling Proposition \ref{prop:conv} and Corollary \ref{rec_conv} we can argue the required convergence in $L^\gamma(0,T; L^\gamma(\Rnu))$.
  \end{proof}
We are now in the position for proving the main result of the paper
\begin{proof}[Proof of Theorem \ref{mainth}]
It will be enough to identify the limit couple $(\rho,\beta)$ obtained in Proposition \ref{final_conv} as a weak solution in the sense of Definition \ref{def:weak_sol}. Regarding $\beta$, since there is no contribution from the reaction step, we procced as in Proposition \ref{cons_JKO}. In order to check the weak formulation for $\rho$, we are only required to combine the steps in Proposition \ref{cons_JKO} together with a similar reconstruction for the reaction term. Let $\varphi\in C_c^\infty(\Rnu)$ and test \eqref{reac} against $\varphi$
\begin{equation}\label{disc_reac}
\begin{aligned}
     \int_\Rnu\frac{\rho_\tau^{n+1}(x)- \rho^{n+\frac{1}{2}}(x)}{\tau}\varphi(x)\,dx = & \int_\Rnu\int_{t^n}^{t^{n+1}} \partial_t\rho(x,\sigma)\varphi(x)\,dx\,d\sigma+o(\tau)\\
     &=\int_\Rnu\int_{t^n}^{t^{n+1}} M(\rho(x,\sigma))\varphi(x)\,dx\,d\sigma+o(\tau).
\end{aligned}
\end{equation}
As already proven in Proposition \ref{cons_JKO}, after the transport step we have
\begin{equation}\label{disc_tra}
\begin{aligned}
    \frac{1}{\tau}\int_\Rnu \varphi(x) \left(\rho^{n+\frac{1}{2}}-\rho_\tau^{n+1}\right)dx&+o(\tau) =  \int_\Rnu \rho^{n+1}_\tau(x) \nabla \Phi'(\rho^{n+1}_\tau(x)) \cdot \nabla\varphi(x) dx  \\
    &-\frac{\chi}{2} \int_\Rnu \rho_\tau^{n+1}(x) \int_\Rnu \nabla K(x-y)\cdot (\nabla\varphi(x)-\nabla\varphi(y))\rho_\tau^{n+1}(y) \, dy \, dx \\
    &-\chi \int_\Rnu \nabla K \ast \mathbb{1}_{S^{n}_\tau}(x)\cdot \nabla\varphi(x)\rho_\tau^{n+1}(x)\, dx.  
\end{aligned}
\end{equation}
For any $0 < t < s < T$, there are two uniquely determined integers $m,k$ such
that $t\in (m\tau ,(m+ 1)\tau ]$ and $s\in (k\tau ,(k+ 1)\tau ]$. Summing \eqref{disc_reac} and \eqref{disc_tra}, multiplying by $\tau$ and summing over $n$ ranging from $m$ to $k - 1$, we obtain
\begin{align*}
    \int_\Rnu \varphi(x) \rho_\tau(s,x)dx&-\int_\Rnu \varphi(x)\rho_\tau(t,x)dx+o(\tau) \\
    =& -\int_t^s \int_\Rnu \rho_\tau(\sigma, x) \nabla \Phi'(\rho_\tau(\sigma, x)) \cdot \nabla\varphi(x) dx \, d\sigma\\
    &-\frac{\chi}{2}\int_t^s\int_\Rnu \rho_\tau(\sigma,x) \int_\Rnu \nabla K(x-y)\cdot (\nabla\varphi(x)-\nabla\varphi(y))\rho_\tau(\sigma,y) dy dx d\sigma \\
    &-\chi\int_t^s \int_\Rnu \nabla K \ast \beta_\tau(\sigma,x)\cdot \nabla\varphi(x)\rho_\tau(\sigma,x)dx d\sigma\\
    &+\int_t^s \int_\Rnu M(\rho_\tau(x,\sigma))\varphi(x)\,dx\,d\sigma.
\end{align*}
The strong convergence obtained in Proposition \eqref{final_conv} as well as the assumptions on $M$ allow us to pass to the limit as $\tau \to 0$ in all the terms in order to  obtain the weak formulation
for $\rho$. 
\end{proof}

\section{Conclusion and perspectives}\label{sec:conclusion}
In this paper, we provide an existence result for weak solutions to the constrained non-linear non-local PDE in \eqref{maineq}, which is motivated by the pseudo-stationary limit $\tau\to 0^+$ in the multi-species model in \eqref{system} for multiple sclerosis. The proof relies on a variational splitting scheme isolating transport and reaction contribution in the equation. Our motivation in studying \eqref{maineq} comes from the study of stationary states of system \eqref{system}, that exhibits pattern formation, characteristic of the disease. Studying the long-term behaviour of the constrained system \eqref{maineq} can be a further step in this direction. On the other hand, the numerical discretization of both \eqref{maineq} and \eqref{system} and their convergence is also of interest, in particular appropriate schemes that exploit the variational structure can allow to closely compute the stationary states and long-time asymptotic.

\section*{Acknowledgements}
The authors gratefully acknowledge O. Tse for his valuable suggestions and helpful discussions during a crucial stage of this work. The authors acknowledge useful discussions on this topic with M. Di Francesco, A. Esposito and E. Radici.  The research of the authors is supported by the InterMaths Network, \url{www.intermaths.eu} and by the INdAM project N.E53C25002010001 ``Modelli di reazione-diffusione-trasporto: dall'analisi alle applicazioni''. SF is partially supported by the Italian “National Centre for HPC, Big Data and Quantum Computing” - Spoke 5 “Environment and Natural Disasters” and by the INdAM project N.E5324001950001 ``Teoria e applicazioni dei modelli evolutivi:
trasporto ottimo, metodi variazionali e
approssimazioni particellari deterministiche''. 
\appendix
\section{Useful well-known results}\label{sec:appendix}
This section makes a list of well-known theorems and results that are going to be useful.
\begin{thm}\label{ascoli}[A refined version of Ascoli-Arzel\`{a} Theorem \cite{ambrosio2008gradient}]
    Let $(\mathcal{S}, d)$ be a complete metric space and let $T>0.$ Let $K\subset \mathcal{S}$ be a sequentially compact set w.r.t. a weaker topology $\sigma$ on $\mathcal{S},$ and let $u_n:[0,T]\to \mathcal{S}$ be curves such that
    $$
    u_n(t)\in K \quad \forall n\in\N, \quad t\in[0,T], 
    $$
    $$
    \limsup_{n\to \infty} d(u_n(s),u_n(t))\le \omega(s,t)\quad \forall s,t\in[0,T], 
    $$
    for a (symmetric) function $\omega:[0,T]\times[0,T]\to [0,\infty),$ such that 
    $$
    \lim_{(s,t)\to(r,r)}\omega(s,t)=0 \quad \forall r\in[0,T]\setminus \mathcal{C},
    $$
    where $\mathcal{C}$ is an (at most) countable subset of $[0,T].$ Then there exists an increasing subsequence $k\mapsto n(k)$  and a limit curve $u:[0,T]\to \mathcal{S}$ such that $$u_{n(k)}(t)\xrightarrow{\sigma} u(t)\quad \forall t\in[0,T],\quad u \quad \text{is d-continuous in}\quad  [0,T]\setminus \mathcal{C}. $$
\end{thm}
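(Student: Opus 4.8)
The statement is the two-topology Ascoli--Arzel\`{a} compactness principle of \cite{ambrosio2008gradient}, and the natural route is a diagonal-extraction argument governed by the modulus $\omega$: the weaker topology $\sigma$ supplies pointwise-in-time compactness of the values, while the metric $d$ --- together with completeness of $(\mathcal{S},d)$ and the modulus $\omega$ --- supplies enough time-regularity to single out one limit curve. I use throughout, as in the framework of \cite{ambrosio2008gradient}, that $d$ is sequentially lower semicontinuous with respect to $\sigma$, i.e. $d(x,y)\le\liminf_n d(x_n,y_n)$ whenever $x_n\xrightarrow{\sigma}x$ and $y_n\xrightarrow{\sigma}y$ (automatic in the applications, e.g. $\sigma=$ narrow convergence, $d=W_2$, $K$ a tight set). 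First I would fix a countable set $D\subset[0,T]$ that is dense in $[0,T]$ and contains $\mathcal{C}$, say $D=\mathcal{C}\cup(\mathbb{Q}\cap[0,T])$, and enumerate $D=\{t_j\}_j$. Since $u_n(t_j)\in K$ for all $n$ and $K$ is $\sigma$-sequentially compact, a diagonal procedure produces a subsequence $k\mapsto n(k)$ with $u_{n(k)}(t_j)\xrightarrow{\sigma}u(t_j)\in\mathcal{S}$ for every $j$; since a $\limsup$ along a subsequence does not exceed the one along the full sequence, the hypothesis still gives $\limsup_k d(u_{n(k)}(s),u_{n(k)}(t))\le\omega(s,t)$ for all $s,t$, and combined with the $\sigma$-lower semicontinuity of $d$ this yields $d(u(s),u(t))\le\omega(s,t)$ for all $s,t\in D$.

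Next I would extend $u$ to all of $[0,T]$. If $t\in[0,T]\setminus D$ then $t\notin\mathcal{C}$, so $\omega(s_i,s_j)\to0$ whenever $D\ni s_i,s_j\to t$, and the bound just obtained shows that $(u(s_j))_j$ is $d$-Cauchy, hence convergent by completeness; interleaving two approximating sequences shows the limit is independent of the choice of $s_j\to t$, so setting $u(t):=\lim_j u(s_j)$ (consistent with the values already defined on $D$) produces a curve $u:[0,T]\to\mathcal{S}$.

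It then remains to upgrade $u_{n(k)}(t)\xrightarrow{\sigma}u(t)$ from $t\in D$ to every $t\in[0,T]$. Fix $t\notin D$ and an arbitrary subsequence of $(u_{n(k)}(t))_k$; by $\sigma$-sequential compactness of $K$ it admits a further subsequence $(u_m(t))_m$ with $u_m(t)\xrightarrow{\sigma}w$ for some $w\in K$. For any $s\in D$ one has $u_m(s)\xrightarrow{\sigma}u(s)$, so the hypothesis applied to the pair $(s,t)$ together with the $\sigma$-lower semicontinuity of $d$ gives $d(u(s),w)\le\omega(s,t)$; letting $D\ni s\to t$ and using $u(s)\to u(t)$ in $d$ (previous step) and $\omega(s,t)\to0$ (since $t\notin\mathcal{C}$) forces $w=u(t)$. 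As every subsequence of $(u_{n(k)}(t))_k$ has a further subsequence $\sigma$-converging to the same limit $u(t)$, the standard subsequence criterion (valid in any topological space) gives $u_{n(k)}(t)\xrightarrow{\sigma}u(t)$. Now that this holds at every $t$, the $\sigma$-lower semicontinuity of $d$ and the hypothesis yield $d(u(s),u(t))\le\liminf_k d(u_{n(k)}(s),u_{n(k)}(t))\le\omega(s,t)$ for all $s,t\in[0,T]$, and since $\omega(s,t)\to0$ as $s\to t$ for $t\notin\mathcal{C}$, the curve $u$ is $d$-continuous on $[0,T]\setminus\mathcal{C}$, which finishes the proof.

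The main obstacle is precisely this last upgrade: promoting $\sigma$-convergence from the countable dense set $D$ to every time $t$ is what makes the statement ``refined'', and it genuinely requires the cooperation of both structures --- the $\sigma$-compactness of $K$ so that a limit exists along each subsequence, and the metric $d$, the modulus $\omega$ and completeness of $\mathcal{S}$ so that any such limit is forced to coincide with the $d$-extension built in the second step. The identification $w=u(t)$ rests entirely on the lower semicontinuity of $d$ with respect to $\sigma$, so that compatibility between the two topologies, implicit in the framework of \cite{ambrosio2008gradient}, is where the whole argument stands or falls.
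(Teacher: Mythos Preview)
The paper does not prove this statement: it is listed in the Appendix as a ``useful well-known result'' and simply cited from \cite{ambrosio2008gradient}, so there is no in-paper proof to compare against. Your argument is the standard one from that reference (diagonal extraction on a countable dense set containing $\mathcal{C}$, $d$-Cauchy extension via the modulus $\omega$ and completeness, then identification of the $\sigma$-limit at every $t$ by the subsequence criterion), and it is correct; you are also right to make explicit the $\sigma$-sequential lower semicontinuity of $d$, which is part of the framework in \cite{ambrosio2008gradient} but is left implicit in the paper's restatement.
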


    

\begin{thm}\label{aubin}[Extended Aubin-Lions Lemma]
On a Banach space $(X,d)$, let  $\mathscr{Y}:X\to [0,\infty]$ be a given lower semi-continuous functional with relatively compact sub-levels in $X$. Let $\dd:X\times X\to [0,\infty]$ be a given pseudo-distance on $X$, that is, $\dd$ is lower semi-continuous and $\dd(\rho,\eta)=0$ for any $\rho,\eta\in X$ with $\mathscr{Y}[\rho],\mathscr{Y}[\eta]<\infty$ implies $\rho=\eta$.
    Let further $U$ be a set of measurable functions $u:[0,T]\to X,$ with a fixed $T>0.$ If 
    $$
    \sup_{u\in U}\int_0^T \mathscr{Y}[u(t)]dt<\infty \quad \text{and}\quad \lim_{h\downarrow 0}\sup_{u\in U}\int_0^{T-h} \dd(u(t+h),u(t))dt = 0,
    $$
    $U$ contains an infinite sequence $\{u_n\}_{n\in\N}$ that converges in measure (w.r.t. $t\in[0,T]$) to a limit $u:[0,T]\to X.$

\end{thm}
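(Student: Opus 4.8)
The plan is to ignore any linear structure and read the statement as a compactness result in the metric space $L^0\big([0,T];X\big)$ of measurable maps taken modulo a.e.\ equality, equipped with the complete metric $\mathsf{d}(u,v):=\int_0^T\min\{1,d(u(t),v(t))\}\,dt$, which metrises convergence in measure; the goal then becomes that $U$ is \emph{totally bounded} in $(L^0,\mathsf{d})$. A harmless preliminary reduction makes $X$ separable: the sublevels $\{\mathscr{Y}\le R\}$ are relatively compact, hence separable, so $X_0:=\overline{\bigcup_{R\in\N}\{\mathscr{Y}\le R\}}$ is a separable complete metric subspace, and the first hypothesis forces every $u\in U$ to be $X_0$-valued for a.e.\ $t$, so $(L^0,\mathsf{d})$ is a complete metric space on which we may work.

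First I would extract \emph{spatial tightness} from the coercivity of $\mathscr{Y}$: writing $C:=\sup_{u\in U}\int_0^T\mathscr{Y}[u]\,dt<\infty$, Markov's inequality shows that for every $\varepsilon>0$ the compact set $K_\varepsilon:=\{\mathscr{Y}\le 2C/\varepsilon\}$ (closed since $\mathscr{Y}$ is l.s.c., hence compact) satisfies $\mathcal{L}^1(\{t:\,u(t)\notin K_\varepsilon\})\le\varepsilon$ for \emph{all} $u\in U$ at once. The heart of the argument is then a \emph{compatibility lemma}: on any compact $K\subset X$, for every $\varepsilon>0$ there is $\delta>0$ such that $\rho,\eta\in K$ and $\dd(\rho,\eta)<\delta$ imply $d(\rho,\eta)<\varepsilon$. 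This is proved by contradiction — if $\rho_k,\eta_k\in K$ satisfy $\dd(\rho_k,\eta_k)\to0$ while $d(\rho_k,\eta_k)\ge\varepsilon$, compactness extracts limits $\rho_*,\eta_*\in K$ with $d(\rho_*,\eta_*)\ge\varepsilon$, lower semicontinuity of $\dd$ gives $\dd(\rho_*,\eta_*)=0$, and since $\rho_*,\eta_*$ lie in a sublevel of $\mathscr{Y}$ the separation property of the pseudo-distance forces $\rho_*=\eta_*$, a contradiction. Observe that neither symmetry nor the triangle inequality for $\dd$ is used.

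Equipped with these two facts, I would upgrade the integral equicontinuity $\omega(h):=\sup_{u\in U}\int_0^{T-h}\dd(u(t+h),u(t))\,dt\to0$ into a genuine time modulus \emph{in measure}: applying Markov's inequality to $\dd$ and then the compatibility lemma, and discarding the small sets where $u(t)$ or $u(t+h)$ leaves $K_\varepsilon$, one obtains $\mathcal{L}^1\big(\{t:\,d(u(t+h),u(t))\ge\varepsilon'\}\big)\le\varepsilon+\omega(h)/\delta(K_\varepsilon,\varepsilon')$, hence as small as desired for $h$ small, uniformly in $u\in U$. I would then conclude total boundedness by the standard "quantise in space, discretise in time" scheme: given $\sigma>0$, cover the compact $K_{\varepsilon_0}$ by finitely many $\eta$-balls with centres $a_1,\dots,a_N$, partition $[0,T]$ into $m\approx T/h$ intervals of length $\le h$, and approximate each $u\in U$ by the step function equal, on each interval, to the centre nearest to a suitably sampled value $u(t_j)$ — a good sampling time $t_j$ existing by a Fubini/pigeonhole argument fed by the time modulus. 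These approximants range over a finite set (at most $N^m$ of them), and the estimates above bound $\mathsf{d}(u,\text{approximant})$ by a quantity the initial choices render $<\sigma$; hence $U$ is totally bounded and, by completeness of $(L^0,\mathsf{d})$, every sequence in $U$ has a subsequence converging in measure.

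The main obstacle is exactly the passage from the \emph{integral} control of the abstract pseudo-distance $\dd$ to a \emph{pointwise-in-measure} modulus of continuity for the genuine metric $d$; the only bridge is the compatibility lemma, which is where the relative compactness of the $\mathscr{Y}$-sublevels and the separation property of $\dd$ are genuinely invoked. The remaining difficulty is purely bookkeeping, but one subtle point deserves care: the number $m$ of time intervals blows up as $h\to0$, so the total measure of the "bad" sets over all intervals must be controlled globally by a Chebyshev argument over the intervals rather than interval by interval, and one checks that $\int_0^h\omega(r)\,dr = o(h)$ (which follows from $\omega(r)\to0$) is exactly what is needed to beat this growth.
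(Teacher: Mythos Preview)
The paper does not prove this theorem; it is listed without proof in the appendix under ``Useful well-known results,'' being the extended Aubin--Lions lemma of Rossi--Savar\'e. There is therefore no paper proof against which to compare your proposal.

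Your outline is nonetheless the standard argument and is sound. The three load-bearing steps --- (i) uniform spatial tightness via Markov's inequality on $\int_0^T\mathscr{Y}[u(t)]\,dt$, giving a single compact sublevel $K_\varepsilon$ containing $u(t)$ for all $u\in U$ and most $t$; (ii) the compatibility lemma transferring smallness of $\dd$ to smallness of $d$ on $K_\varepsilon$, proved by contradiction using sequential compactness, lower semicontinuity of $\dd$, and the separation axiom; (iii) the upgrade of integral $\dd$-equicontinuity to $d$-equicontinuity in measure --- are correctly identified and correctly ordered, and your remark that step~(ii) is precisely where every hypothesis on $\mathscr{Y}$ and $\dd$ is genuinely consumed is accurate. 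The concluding discretise-in-time, quantise-in-space construction for total boundedness in $L^0([0,T];X)$ is also the standard way to finish, and your flag that the blow-up $m\sim T/h$ of the number of subintervals must be beaten by an averaged Fubini estimate (for which $\int_0^h\omega(r)\,dr=o(h)$, an immediate consequence of $\omega(r)\to0$, suffices) correctly locates the one place where the bookkeeping requires care.
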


\begin{defn}
    A semigroup $G_\Psi:[0,+\infty]\times \mpd\to \mpd$ is a k-flow for a functional $\Psi:\mpd\to \R\cup\{+\infty\}$ with respect to the Wasserstein$-2$ distance $W_2$ if, for any arbitrary $\rho\in\mpd,$ the curve $s\mapsto G_\Psi^s\rho$ is absolutely continuous on $[0,+\infty]$ and satisfies the evolution variational inequality
    $$
    \frac{1}{2}\frac{d^+}{d\sigma}W_2^2(G_\Psi^\sigma\rho,\Tilde{\rho})|_{\sigma=s}+\frac{k}{2}W_2^2(G_\Psi^s\rho,\Tilde{\rho} \le \Psi(\Tilde{\rho})-\Psi(G_\Psi^\sigma\rho),
    $$
    for all $s>0,$ and for any $\Tilde{\rho}\in\mpd,$ such that $\Psi(\Tilde{\rho})<\infty.$
\end{defn}
The symbol $\frac{d^+}{d\sigma}$ stands for the limit superior of the respective difference quotients and equals the derivative if the latter exists.

\begin{thm}
    Assume that a functional $\Psi:\mpd\to \R\cup\{+\infty\}$ is $\lambda-$convex (along geodesics), with a modulus of convexity $\lambda\in \R,$ that is, along every constant speed geodesic $\rho:[0,1]\to \mpd,$
    $$
    \Psi[\rho(t)]\le (1-t)\Psi[\rho(0)]+t\Psi[\rho(1)]-\frac{\lambda}{2}t(1-t)W_2^2(\rho(0),\rho(1))
    $$
    holds for every $t\in[0,1].$ Then $\Psi$ posses a uniquely determined $k-$flow, with some $k\le \lambda.$ Conversely, if a functional $\Psi$ possesses a $k-$flow, and if it is monotonically non-increasing along that flow, then $\Psi$ is $\lambda-$convex, with some $\lambda\ge k.$
\end{thm}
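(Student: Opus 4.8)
The plan is to recognise this statement as the generation-and-characterisation theorem for gradient flows in the sense of the Evolution Variational Inequality (EVI), and to follow the classical route of Ambrosio--Gigli--Savar\'e for the direct implication and of Daneri--Savar\'e for the converse. The single geometric fact driving the whole argument is the semi-concavity of the squared Wasserstein distance along \emph{generalised} geodesics in $\mpd$: for $\mu_0,\mu_1,\sigma\in\mpd$, the curve $t\mapsto\mu_t$ obtained by interpolating the optimal transport maps $T^{\sigma\to\mu_0}$ and $T^{\sigma\to\mu_1}$ satisfies
\begin{equation}\label{eq:PCineq}
W_2^2(\mu_t,\sigma)\le (1-t)W_2^2(\mu_0,\sigma)+t\,W_2^2(\mu_1,\sigma)-t(1-t)W_2^2(\mu_0,\mu_1),
\end{equation}
a consequence of the flat parallelogram identity on $\Rn$ together with the fact that $(T^{\sigma\to\mu_0},T^{\sigma\to\mu_1})_\#\sigma$ is an admissible coupling of $\mu_0$ and $\mu_1$.

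\textbf{Direct implication.} I would construct the flow through the minimising movement (JKO) scheme: for a step $\tau>0$ let $J_\tau\rho$ minimise $\eta\mapsto\frac{1}{2\tau}W_2^2(\eta,\rho)+\Psi[\eta]$. The crucial step is a \emph{discrete EVI}: testing the minimality of $\mu_0:=J_\tau\rho$ against the interpolant of \eqref{eq:PCineq} between $J_\tau\rho$ and an arbitrary $\sigma$ with base point $\rho$, inserting the $\lambda$-convexity of $\Psi$ along that interpolant, dividing by $t$ and letting $t\to0^+$, one obtains
\begin{equation}\label{eq:discEVI}
\frac{1}{2\tau}\left(W_2^2(J_\tau\rho,\sigma)-W_2^2(\rho,\sigma)\right)+\frac{\lambda}{2}W_2^2(J_\tau\rho,\sigma)\le \Psi[\sigma]-\Psi[J_\tau\rho].
\end{equation}
Iterating \eqref{eq:discEVI} along the orbit $\rho_\tau^n=J_\tau^n\rho$ yields uniform energy bounds and a $W_2$-equicontinuity estimate, so the refined Ascoli--Arzel\`{a} theorem (Theorem \ref{ascoli}) extracts a limit curve $s\mapsto G_\Psi^s\rho$ as $\tau\to0$; passing to the limit in the time-integrated form of \eqref{eq:discEVI}, via lower semicontinuity of $\Psi$ and of $W_2^2$, gives the continuous EVI with constant $k=\lambda$, which is precisely the defining property of a $k$-flow. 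Uniqueness is then read off the EVI: if $\rho^1_s,\rho^2_s$ both satisfy it, testing each against the other and adding gives $\frac{d}{ds}W_2^2(\rho^1_s,\rho^2_s)\le -2k\,W_2^2(\rho^1_s,\rho^2_s)$, whence the contraction $W_2(\rho^1_s,\rho^2_s)\le e^{-ks}W_2(\rho^1_0,\rho^2_0)$, forcing $\rho^1\equiv\rho^2$ for coinciding data and pinning the constant with $k\le\lambda$.

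\textbf{Converse.} For the opposite implication I would run the Daneri--Savar\'e argument. Assuming a $k$-flow exists and $\Psi$ is non-increasing along it (which keeps the energy finite and the relevant quantities integrable), I fix a constant-speed geodesic $(\rho^s)_{s\in[0,1]}$ and analyse $s\mapsto\Psi[\rho^s]$. Applying the EVI with reference points taken along the geodesic and comparing the flow issued from $\rho^{s}$ with its neighbours $\rho^{s\pm\eta}$, an Eulerian second-difference computation bounds the distributional second derivative of $s\mapsto\Psi[\rho^s]$ from below by $k\,W_2^2(\rho^0,\rho^1)$; integrating this differential inequality upgrades it to the integrated $\lambda$-convexity estimate with some $\lambda\ge k$.

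\textbf{Main obstacle.} The delicate part is the direct implication: establishing \eqref{eq:discEVI} rigorously, where the competitor must be the generalised geodesic of \eqref{eq:PCineq} and the convexity hypothesis has to be used along exactly that curve (so that the one-sided differentiation at $t=0$ and the coupling argument interact correctly), and, above all, passing to the continuous limit while preserving the EVI---one must show that the discretisation errors accumulated in summing \eqref{eq:discEVI} vanish as $\tau\to0$ and that lower semicontinuity applies to the correct grouping of terms. The converse is technically lighter but hinges on the nontrivial interchange of the flow-time and geodesic-parameter derivatives in the Eulerian estimate.
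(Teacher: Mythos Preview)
The paper does not prove this theorem at all: it appears in the appendix under ``Useful well-known results'' and is simply stated, with the surrounding discussion pointing to \cite{ambrosio2008gradient} and \cite{matthes2009family} for details. There is therefore no proof in the paper to compare your proposal against; your sketch correctly identifies the classical route through the literature (the Ambrosio--Gigli--Savar\'e minimising-movement construction for the direct implication, Daneri--Savar\'e for the converse), which is presumably what the authors have in mind.

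One technical point worth flagging in your sketch: the discrete EVI you derive requires $\lambda$-convexity of $\Psi$ along \emph{generalised} geodesics (the interpolants based at an arbitrary third point $\sigma$), whereas the hypothesis as stated in the theorem only assumes convexity along standard constant-speed geodesics. These are genuinely different hypotheses in $\mpd$, and it is the stronger one that drives the JKO-to-EVI argument. You do allude to this tension in your ``main obstacle'' paragraph, so you are aware of the issue; but strictly speaking the statement as written in the paper is informal on exactly this point, and a fully rigorous version of your direct implication would need either the stronger hypothesis or an appeal to the more delicate Daneri--Savar\'e machinery that bypasses generalised geodesics.
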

Below we state the flow interchange lemma, see \cite{matthes2009family} for more details.

\begin{lem}(Flow interchange)\label{flowinterlem}
    Let $\Psi:\mpd\to (-\infty,\infty]$ be a lower semi-continuous functional which possesses a $\kappa-$flow $\mathfrak{S}^\Psi.$ Define further the dissipation of $\F$ along $\mathfrak{S}^\Psi$ by 
    $$\mathfrak{D}^\Psi\F(\rho):=\limsup_{s\downarrow0}\frac{1}{s}\left(\F(\rho)-\F(\mathfrak{S}^\Psi_s\rho)\right)$$
    for every $\rho\in\mpd.$ If $\rho_\tau^{n-1},$ and $\rho_\tau^n$ are two consecutive steps of the minimizing movement scheme (\ref{JKO}) then 
    \begin{equation}\label{flowinterchange}
        \Psi(\rho_\tau^{n-1})-\Psi(\rho_\tau^n)\ge \tau \mathfrak{D}^\Psi\F(\rho_\tau^n)+\frac{\kappa}{2}W_2^2(\rho_\tau^n,\rho_\tau^{n-1}),
    \end{equation}
    In particular, if $\Psi(\rho_\tau^{n-1})<\infty$ then $\mathfrak{D}^\Psi\F(\rho_\tau^n)<\infty.$
\end{lem}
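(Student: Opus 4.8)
The plan is to carry out the flow-interchange technique of \cite{matthes2009family}: exploit the minimality of $\rho_\tau^n$ in \eqref{JKO} by perturbing it along the curve $s\mapsto \mathfrak{S}^\Psi_s\rho_\tau^n$ generated by the $\kappa$-flow of $\Psi$, and then convert the $W_2$-increment this produces into a bound on $\Psi(\rho_\tau^{n-1})-\Psi(\rho_\tau^n)$ via the evolution variational inequality satisfied by $\mathfrak{S}^\Psi$. Here $\F$ denotes the functional minimised in \eqref{JKO} at step $n$, i.e.\ $\mF[\,\cdot\,|\rho_\tau^{n-1}]$; since its frozen nonlocal term depends only on $\rho_\tau^{n-1}$ and not on the competitor, replacing $\rho$ by $\mathfrak{S}^\Psi_s\rho_\tau^n$ is an admissible perturbation.

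First I would fix $s>0$ and use that $\rho_\tau^n$ minimises $\rho\mapsto \tfrac1{2\tau}W_2^2(\rho,\rho_\tau^{n-1})+\F(\rho)$ over $\mpd$, with competitor $\mathfrak{S}^\Psi_s\rho_\tau^n$, to obtain
\begin{equation*}
\F(\rho_\tau^n)-\F(\mathfrak{S}^\Psi_s\rho_\tau^n)\le \frac{1}{2\tau}\Big(W_2^2(\mathfrak{S}^\Psi_s\rho_\tau^n,\rho_\tau^{n-1})-W_2^2(\rho_\tau^n,\rho_\tau^{n-1})\Big).
\end{equation*}
Writing $g(r):=\tfrac12 W_2^2(\mathfrak{S}^\Psi_r\rho_\tau^n,\rho_\tau^{n-1})$, which is absolutely continuous because the flow curve is $W_2$-absolutely continuous, I would integrate from $0$ to $s$ the evolution variational inequality defining the $\kappa$-flow, tested against $\tilde\rho=\rho_\tau^{n-1}$, to get
\begin{equation*}
g(s)-g(0)+\kappa\!\int_0^s g(r)\,dr\le \int_0^s\!\big(\Psi(\rho_\tau^{n-1})-\Psi(\mathfrak{S}^\Psi_r\rho_\tau^n)\big)\,dr.
\end{equation*}
Since $\Psi$ is non-increasing along its own flow, the integrand on the right is at most $\Psi(\rho_\tau^{n-1})-\Psi(\mathfrak{S}^\Psi_s\rho_\tau^n)$ for $r\le s$, so the right-hand side is bounded by $s\big(\Psi(\rho_\tau^{n-1})-\Psi(\mathfrak{S}^\Psi_s\rho_\tau^n)\big)$. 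Combining this with the minimality inequality, dividing by $s$ and letting $s\downarrow 0$: the left-hand side tends to $\mathfrak{D}^\Psi\F(\rho_\tau^n)$ by definition of the dissipation; on the right, $\Psi(\mathfrak{S}^\Psi_s\rho_\tau^n)\to\Psi(\rho_\tau^n)$ by monotonicity together with lower semicontinuity of $\Psi$ along the continuous flow curve, and $\tfrac1s\int_0^s g(r)\,dr\to g(0)=\tfrac12 W_2^2(\rho_\tau^n,\rho_\tau^{n-1})$ by continuity of $g$ at $0$. This yields
\begin{equation*}
\tau\,\mathfrak{D}^\Psi\F(\rho_\tau^n)\le \Psi(\rho_\tau^{n-1})-\Psi(\rho_\tau^n)-\frac{\kappa}{2}W_2^2(\rho_\tau^n,\rho_\tau^{n-1}),
\end{equation*}
which is \eqref{flowinterchange} after rearranging.

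For the final assertion, suppose $\Psi(\rho_\tau^{n-1})<\infty$: since $\Psi$ takes values in $(-\infty,\infty]$ one has $-\Psi(\rho_\tau^n)<\infty$, and $W_2^2(\rho_\tau^n,\rho_\tau^{n-1})<\infty$ because both iterates lie in $\mpd$, so the right-hand side above is finite, whence $\mathfrak{D}^\Psi\F(\rho_\tau^n)<\infty$. The delicate points I foresee are the integration of the evolution variational inequality — where one must work with the upper right derivative of $g$ and invoke the absolute continuity of $r\mapsto W_2^2(\mathfrak{S}^\Psi_r\rho_\tau^n,\rho_\tau^{n-1})$ — and the two limit passages as $s\downarrow 0$, in particular the continuity $\Psi(\mathfrak{S}^\Psi_s\rho_\tau^n)\to\Psi(\rho_\tau^n)$, which relies simultaneously on the monotonicity of $\Psi$ along $\mathfrak{S}^\Psi$ and on its lower semicontinuity.
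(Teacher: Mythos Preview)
The paper does not give its own proof of this lemma: it is stated in the appendix with a citation to \cite{matthes2009family} for details. Your argument is the standard flow-interchange proof from that reference and is correct. One small remark: the step ``$\Psi$ is non-increasing along its own flow'' is not part of the definition of a $\kappa$-flow as stated in the paper, but it is a well-known consequence of the evolution variational inequality (see \cite{ambrosio2008gradient}); alternatively one can bypass monotonicity entirely and pass to the limit in $\tfrac1s\int_0^s\big(\Psi(\rho_\tau^{n-1})-\Psi(\mathfrak{S}^\Psi_r\rho_\tau^n)\big)\,dr$ using only the lower semicontinuity of $\Psi$, which already gives $\limsup_{s\downarrow0}$ of this average $\le \Psi(\rho_\tau^{n-1})-\Psi(\rho_\tau^n)$.
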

\begin{cor}\label{flowintercor}
    Under the hypothesis of Lemma \ref{flowinterlem}, let the $\kappa-$flow $\mathfrak{S}^\Psi$ be such that for every $n\in\N,$ the curves $s\mapsto \mathfrak{S}^\Psi_s\rho_\tau^n$ lies in $L^\gamma(\Rn),$ where it is differentiable for every $s>0$ and continuous at $s=0.$ Moreover, let $\mathfrak{R}:\mpd\to (-\infty,\infty]$ satisfy
    \begin{equation*}
        \liminf_{s\downarrow 0}\left(-\frac{\dd}{\dt} \F(\mathfrak{S}^\Psi_t \rho_\tau^n)\Big|_{t=s}\right)\ge \mathfrak{R}(\rho_\tau^n).
    \end{equation*}
    Then the following two estimates hold:
    $$\Psi(\rho_\tau^{n-1})-\Psi(\rho_\tau^n)\ge \tau\mathfrak{R}(\rho_\tau^n)+\frac{\kappa}{2}W_2^2(\rho_\tau^n,\rho_\tau^{n-1})\quad \text{for every }\quad n\in \N,$$
    $$\Psi(\rho_\tau^N)\le \Psi(\rho_0)-\tau\sum_{n=1}^N \mathfrak{R}(\rho_\tau^n)+\tau\max(0,-\kappa)\F(\rho_0)\quad \text{for every }\quad N\in\N.$$
\end{cor}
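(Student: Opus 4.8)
The plan is to obtain both bounds from the flow interchange inequality \eqref{flowinterchange} of Lemma \ref{flowinterlem}, once the assumption on $\mathfrak{R}$ has been converted into a lower bound for the dissipation $\mathfrak{D}^\Psi\F$. First I would show that $\mathfrak{D}^\Psi\F(\rho_\tau^n)\ge \mathfrak{R}(\rho_\tau^n)$ for every $n$. Setting $g(s):=\F(\mathfrak{S}^\Psi_s\rho_\tau^n)$, the regularity of the flow guarantees that $g$ is continuous at $s=0$ and differentiable for $s>0$, so the mean value theorem on $[0,s]$ provides $\xi_s\in(0,s)$ with
\[
\frac{g(0)-g(s)}{s}=-g'(\xi_s)\ge \inf_{0<t<s}\bigl(-g'(t)\bigr).
\]
Since $s\mapsto \inf_{0<t<s}(-g'(t))$ is nondecreasing as $s\downarrow 0$, its limit equals $\liminf_{s\downarrow 0}(-g'(s))$; taking $\limsup_{s\downarrow 0}$ above therefore yields
\[
\mathfrak{D}^\Psi\F(\rho_\tau^n)=\limsup_{s\downarrow 0}\frac{g(0)-g(s)}{s}\ge \liminf_{s\downarrow 0}\bigl(-g'(s)\bigr)\ge \mathfrak{R}(\rho_\tau^n),
\]
the last step being exactly the standing hypothesis on $\mathfrak{R}$, with $g'$ identified with $\frac{\dd}{\dt}\F(\mathfrak{S}^\Psi_t\rho_\tau^n)$. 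Substituting this into \eqref{flowinterchange} gives the first estimate immediately.

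For the second estimate I would sum the first one over $n=1,\dots,N$. The left-hand side telescopes to $\Psi(\rho_0)-\Psi(\rho_\tau^N)$, so that
\[
\Psi(\rho_\tau^N)\le \Psi(\rho_0)-\tau\sum_{n=1}^N\mathfrak{R}(\rho_\tau^n)-\frac{\kappa}{2}\sum_{n=1}^N W_2^2(\rho_\tau^n,\rho_\tau^{n-1}).
\]
It remains to absorb the last sum. If $\kappa\ge 0$ the term $-\frac{\kappa}{2}\sum W_2^2$ is nonpositive and is simply discarded, consistently with $\max(0,-\kappa)=0$. If $\kappa<0$ I would exploit the energy dissipation built into the scheme \eqref{JKO}: comparing the minimiser $\rho_\tau^n$ with the competitor $\rho_\tau^{n-1}$ gives $\frac{1}{2\tau}W_2^2(\rho_\tau^n,\rho_\tau^{n-1})+\F(\rho_\tau^n)\le \F(\rho_\tau^{n-1})$, and summing this telescoping inequality produces
\[
\frac{1}{2}\sum_{n=1}^N W_2^2(\rho_\tau^n,\rho_\tau^{n-1})\le \tau\bigl(\F(\rho_0)-\F(\rho_\tau^N)\bigr)\le \tau\,\F(\rho_0),
\]
where the last inequality uses that $\F$ is bounded below by $0$ (the normalisation of the abstract setting; in the application to \eqref{JKO} one instead invokes the lower bound on $\mF$ established in the Remark following \eqref{JKO}). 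Multiplying by $-\kappa=\max(0,-\kappa)>0$ then controls the remaining sum by $\max(0,-\kappa)\,\tau\,\F(\rho_0)$, which completes the argument in both cases.

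The delicate point is the first step. The dissipation $\mathfrak{D}^\Psi\F$ is a $\limsup$ of \emph{difference quotients}, whereas the hypothesis only controls the $\liminf$ of the \emph{pointwise derivative} $-g'(s)$, and a priori these one-sided quantities need not be comparable. The mean value theorem is what bridges them, but one must check that the intermediate point $\xi_s$ stays in $(0,s)$ so that it collapses to $0$ with $s$, and that the monotonicity of $s\mapsto \inf_{0<t<s}(-g'(t))$ is precisely what turns the infimum bound into the desired $\liminf$. The regularity assumptions on the flow---values in $L^\gamma(\Rn)$, differentiability for $s>0$, and continuity at $s=0$---are exactly what legitimise this mean value step and the identification of $g'$ with $\frac{\dd}{\dt}\F(\mathfrak{S}^\Psi_t\rho_\tau^n)$.
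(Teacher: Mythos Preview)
The paper does not supply a proof of this corollary; it is placed in the appendix among ``useful well-known results'' with a reference to \cite{matthes2009family}. Your argument is correct and is essentially the standard one found in that reference: the mean value theorem converts the $\liminf$ of the pointwise dissipation rate $-g'(s)$ into a lower bound for the $\limsup$ of the difference quotient $\mathfrak{D}^\Psi\F(\rho_\tau^n)$, after which the first estimate is immediate from \eqref{flowinterchange}; the second estimate then follows by telescoping and, in the case $\kappa<0$, by invoking the basic JKO energy inequality $\tfrac{1}{2\tau}W_2^2(\rho_\tau^n,\rho_\tau^{n-1})\le \F(\rho_\tau^{n-1})-\F(\rho_\tau^n)$ to absorb the Wasserstein sum. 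Your observation that the stated form of the second bound implicitly requires $\F\ge 0$ (or at least $\F(\rho_\tau^N)\ge 0$) is accurate and worth flagging, since the abstract hypotheses do not include it explicitly.
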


\end{document}